                \pgfextractx{\pgf@xa}{\pgfpointanchor{\tikztostart}{east}}
                \pgfextractx{\pgf@xb}{\pgfpointanchor{\tikztotarget}{west}}
                \pgfextracty{\pgf@ya}{\pgfpointanchor{\tikztostart}{center}}
                \pgfextracty{\pgf@yb}{\pgfpointanchor{\tikztotarget}{center}}
                \edef\tikzstartx{\the\pgf@xa}
                \edef\tikzendx{\the\pgf@xb}
                \edef\midy{\the\dimexpr0.5\dimexpr\pgf@ya\relax +0.5\dimexpr\pgf@yb\relax}
\newcommand{\cA}{\mathcal{A}}
\newcommand{\cB}{\mathcal{B}}
\newcommand {\M}{\mathrm{M}}
\newcommand{\C}{\mathbb{C}}
\newcommand{\bE}{\mathbb{E}}
\newcommand{\F}{\mathbb{F}_2}
\newcommand{\bM}{\mathbb{M}_2}
\newcommand{\R}{\mathbb{R}}
\newcommand{\bS}{\mathbb{S}}
\newcommand{\Z}{\mathbb{Z}}
\newcommand{\gr}{\mathrm{gr}}
\newcommand{\rH}{\mathrm{H}}
\newcommand{\rHR}{\rH_\R}
\newcommand{\rHC}{\rH_\Ctwo}
\newcommand{\Top}{\mathbf{Top}}
\newcommand{\xrtarr}[1]{\stackrel{#1}{\longrightarrow}}
\newcommand{\rtarr}{\longrightarrow}
\newcommand{\iso}{\cong}
\newcommand{\sma}{\wedge}
\newcommand{\hsf}{\mathsf{h}}
\newcommand{\sfh}{{\sf h}}
\newcommand{\may}{{\sf m}}
\newcommand{\GFixFunctor}{\Phi}
\newcommand{\GFix}[1]{\GFixFunctor(#1)}
\newcommand{\CFix}[1]{{#1}^{\Ctwo}}
\newcommand{\Fix}[1]{{#1}^{\Ctwo}}
\newcommand{\Res}{\Re}
\newcommand{\ParRes}[1]{\Res(#1)}
\newcommand{\Sp}{\mathbf{Sp}} 
\newcommand{\SpfinR}{\mathbf{Sp}_{2, \mr{fin}}^\R}
\newcommand{\SpfinC}{\mathbf{Sp}_{2, \mr{fin}}^{\mr{C}_2}}
\newcommand{\RP}{\mathbb{RP}}
\newcommand{\CP}{\mathbb{CP}}
\newcommand{\RO}{\mathrm{RO}}
\newcommand{\rE}{\mathrm{E}}
\newcommand{\rX}{\mathrm{X}}
\newcommand{\uF}{\underline{\mathbb{F}}_2}
\newcommand{\mr}[1]{\mathrm{#1}}
\newcommand{\eqHF}{\mr{H}\underline{\mathbb{F}}_2}
\newcommand{\RHF}{\mr{H}_\R\F}
\newcommand{\Ctwo}{{\mr{C}_2}}
\newcommand{\nil}{\mr{nil}}
\newcommand{\yy}{{\sf y}}
\newcommand{\xx}{{\sf x}}
\newcommand{\ttt}{{\sf t}}
\newcommand{\Y}{\mathcal{Y}}
\renewcommand{\emph}[1]{\textit{#1}}
\newcommand{\Pow}{\mathcal{P}}
\newcommand{\uu}{{\sf u}}
\newcommand{\modPhi}{\hat{\Phi}_*}
\newcommand{\Young}[6]{
\begin{tikzpicture}[scale =.2]
\draw  (0,0) -- (0,3) -- (3,3) -- (3,2) -- (2,2) -- (2,1) -- (1,1) -- (1,0) -- (0,0);
\draw (0,1) -- (1,1) -- (1,2) -- (2,2) -- (2,3);
\draw (0,2) -- (1,2) -- (1,3);
\node[] at (.5,.5) {{\tiny #6}};
\node[] at (.5,1.5) {{\tiny #4}};
\node[] at (.5,2.5) {{\tiny #1}};
\node[] at (1.5,1.5) {{\tiny #5}};
\node[] at (2.5,2.5) {{\tiny #3}};
\node[] at (1.5,2.5) {{\tiny#2}};
\end{tikzpicture}
} 
\newcommand{\cYoung}[6]{
\begin{tikzpicture}[scale =.2]
\draw  (0,0) -- (0,3) -- (3,3) -- (3,2) -- (2,2) -- (2,1) -- (1,1) -- (1,0) -- (0,0);
\draw (0,1) -- (1,1) -- (1,2) -- (2,2) -- (2,3);
\draw (0,2) -- (1,2) -- (1,3);
\node[] at (.5,.5) {{\tiny #6}};
\node[] at (.5,1.5) {{\tiny #4}};
\node[] at (.5,2.5) {{\tiny #1}};
\node[] at (1.5,1.5) {{\tiny #5}};
\node[] at (2.5,2.5) {{\tiny #3}};
\node[] at (1.5,2.5) {{\tiny #2}};
\draw[color=red] (-.2,3.2) -- (3,0);
\end{tikzpicture}
}
\newcommand{\sfR}{{\sf R}}
\newcommand{\sfC}{{\sf C}}
\DeclareMathOperator{\Ext}{Ext}
\newcommand{\bMR}{\bM^\R}
\newcommand{\bMC}{\bM^{\Ctwo}}
\newcommand{\cAR}{\cA^\R}
\newcommand{\cAC}{\cA^{\Ctwo}}
\newcommand{\Sq}{\mathrm{Sq}}
\newcommand{\eSq}{\underline{\mathrm{Sq}}}
\newif\ifCharts    
\numberwithin{equation}{section}
\numberwithin{figure}{section}
\def\makeautorefname#1#2{\expandafter\def\csname#1autorefname\endcsname{#2}}
\newtheorem{thm}{Theorem}[section]
\newtheorem{cor}{Corollary}[section]
\newtheorem{prop}{Proposition}[section]
\newtheorem{lem}{Lemma}[section]
\newtheorem{conj}{Conjecture}[section]
\theoremstyle{definition}
\newtheorem{defn}{Definition}[section]
\newtheorem{eg}{Example}[section]
\newtheorem{notn}{Notation}[section]
\newtheorem{rmk}{Remark}[section]
\newenvironment{pf}{\begin{proof}}{\end{proof}}
\let\c@cor=\c@thm
\let\c@con=\c@thm
\let\c@prop=\c@thm
\let\c@lem=\c@thm
\let\c@conj=\c@thm
\let\c@defn=\c@thm
\let\c@eg=\c@thm
\let\c@notn=\c@thm
\let\c@rmk=\c@thm
\let\c@warn=\c@thm
\let\c@qst=\c@thm
\let\c@probl=\c@thm
\let\c@equation=\c@thm
\let\c@figure=\c@thm
\newcommand{\jay}{\mathscr{j}}
\newcolumntype{L}{>{$}l<{$}}
\newenvironment{psmallmatrix}
  {\left(\begin{smallmatrix}}
  {\end{smallmatrix}\right)}
\newcounter{themyfigure}
\definecolor{mycolor}{RGB}{100, 85, 152}
\title[On  realizations of $\cAR(1)$]{On  realizations of the subalgebra $\mathcal{A}^{\R}(1)$ of the $\R$-motivic Steenrod Algebra}
\author{P. Bhattacharya}
\address{Department of Mathematics,
University of Notre Dame,
Notre Dame, IN  46556} 
\email{pbhattac@nd.edu}
\author{B. Guillou}
\address{Department of Mathematics, The University of Kentucky, Lexington, KY 40506--0027}
\email{bertguillou@uky.edu}
\author{A. Li}
\address{Department of Mathematics, The University of Kentucky, Lexington, KY 40506--0027}
\email{ang.li1414201@uky.edu}
\thanks{Bhattacharya was supported by NSF grant DMS-2005476. Guillou and Li were  supported by NSF grant DMS-2003204.}
\begin{document}

\begin{abstract} In this paper, we show that the finite subalgebra $\cAR(1)$, generated by $\Sq^1$ and $\Sq^2$, of the $\R$-motivic Steenrod algebra $\cAR$ can be given 128 different $\cAR$-module structures. We also show that all of these$\cA$-modules can be realized as the cohomology of a $2$-local finite $\R$-motivic spectrum. The realization results are obtained using an $\R$-motivic analogue of the Toda realization theorem. We notice that each realization of $\cAR(1)$ can be expressed as a cofiber of an $\R$-motivic $v_1$-self-map. The  $\Ctwo$-equivariant analogue of the above results then follows because of the Betti realization functor. We identify a relationship between the $\RO(\Ctwo)$-graded Steenrod operations on a $\Ctwo$-equivariant space and the classical Steenrod operations on both its underlying space and its fixed-points. This technique is then used to identify the geometric fixed-point spectra of the $\Ctwo$-equivariant realizations of $\cA^\Ctwo(1)$. We find another application of the $\R$-motivic Toda realization theorem:  we produce an $\R$-motivic, and consequently a $\Ctwo$-equivariant, analogue of the Bhattacharya-Egger spectrum $\mathcal{Z}$, which could be of independent interest. 
\end{abstract}

\maketitle

\dedicatory{
The first author would like to dedicate this work to his baba Jayanta Bhattacharya whose life was lost to covid 19 on May 2, 2021. He always said, \emph{whatever you do do it in depth, and leave no stone unturned.} You will be deeply missed!} 

\tableofcontents
  
 \section{Introduction}
This paper is a continuation of the work that began in \cite{BGL}, where we studied periodic self-maps of a certain finite $\R$-motivic spectrum $\Y^{\R}_{(\hsf, 1)}$. There, we proved that $\Y^{\R}_{(\hsf, 1)}$ supports a $1$-periodic $v_{(1, \nil)}$-self-map (see \cite{BGL}*{Definition 1.7})
\begin{equation} \label{prevselfmap}
\begin{tikzcd}
v: \Sigma^{2,1}\Y^\R_{({\hsf}, 1)} \rar & \Y^\R_{({\hsf}, 1)},
\end{tikzcd}
\end{equation}
whose cofiber realizes the sub-algebra $\cAR(1)$ of the $\R$-motivic Steenrod algebra $\cAR$ generated by $\Sq^1$ and $\Sq^2$.

 The spectrum $\Y^{\R}_{({\sf h}, 1)}$ is 
 an $\R$-motivic lift of the  classical  spectrum \[ \Y:= \Sigma^{-3} \CP^2 \sma \RP^2.\] From the chromatic point of view, the spectrum $\Y$ is extremely useful because it supports a $v_1$-self-map of lowest possible periodicity, that is, one. Famously, Mark Mahowald used the spectrum $\Y$ and the low periodicity of its $v_1$-self-map to prove the height $1$ telescope conjecture at the  prime $2$ \cites{M81, M82}. However, $1$-periodic $v_1$-self-maps of $\Y$ are not unique. In fact, up to homotopy, there are eight different  $v_1$-self-maps  supported by $\Y$, all of whose cofibers are realizations of $\cA(1)$ (see \cite{DM}). Up to weak equivalence, there are four different finite spectra realizing $\cA(1)$, and all of them can be  obtained as the cofiber of some $v_1$-self-map of $\Y$.  These four different realizations can be distinguished by their $\cA$-module structures.  Therefore, it is natural to ask if all of the $v_1$-self-maps of $\Y$ can be lifted to $\R$-motivic analogues, and whether all of the $\R$-motivic realizations of $\cAR(1)$ can be obtained as the cofiber of such a lift. 

The  answer to the above question is complicated by the fact that there are multiple $\R$-motivic lifts of the spectrum $\Y$ (see \cite{BGL}). Even if we insist on those lifts which can potentially realize $\cAR(1)$ as a cofiber of a periodic self-map, we are left with two choices; $\Y^\R_{({\hsf}, 1)}$ and $\Y^\R_{(2, 1)}$. We state our first result towards answering these questions after establishing some notations. 
Further, we shall see that some realizations of $\cAR(1)$ must be given as the cofiber of a map between $\Y^\R_{(\hsf,1)}$ and $\Y^\R_{(2,1)}$ rather than as the cofiber of a self-map of either. 

Before describing the results of this article, we present some notation that will be used throughout.

\begin{notn}\label{NotationList} Throughout this paper, we use the following notations: 
\begin{itemize}
\item $\Sp^\R$ -- the  $\infty$-category of $\R$-motivic spectra. \vspace{2pt}
\item $\Sp^{\Ctwo}$ --  the  $\infty$-category of genuine $\Ctwo$-equivariant spectra.  \vspace{2pt}
\item $\RHF$  -- the $\R$-motivic Eilenberg-Mac~Lane spectrum with $\F$-coefficients.  \vspace{2pt}
\item $\eqHF$ -- the $\mr{C}_2$-equivarient Eilenberg-Mac~Lane spectrum at the constant Mackey functor $\underline{\mathbb{F}}_2$.  \vspace{2pt}
\item $\SpfinR$  -- the category of cellular $\RHF$-complete $\R$-motivic spectra with finitely many cells.  \vspace{2pt}
\item We denote the $1$-dimensional trivial $\R$-representation of $\Ctwo$ by $\upepsilon$, the sign representation by $\upsigma$ and the regular representation by $\uprho$. \vspace{2pt}
\item 
$\rHR^{n,m}(\mr{E}):= [\mr{E}, \Sigma^{n,m}\RHF ]$ -- the $\R$-motivic cohomology of $\mr{E} \in \Sp^{\R}$ with constant sheaf $\F$, where $n$ is the topological degree and $m$ is the motivic weight. \vspace{2pt}
\item $\SpfinC$  -- the category of cellular $\eqHF$-complete $\mr{C}_2$-equivariant spectra with finitely many cells. \vspace{2pt}
\item 
$\rHC^{\star}\!(\mr{E})= [\mr{E}, \rH\uF]_{-\star}^{\Ctwo}$ -- the $\RO(\Ctwo)$-graded cohomology of $\mr{E} \in  \Sp^{\Ctwo}$ with coefficients in the constant Mackey functor. We will often use motivic bigrading for  $\rHC^{\star}\!(\mr{E})$ under the identification 
\[ (n,m) \rightsquigarrow (n-m) \upepsilon + m \upsigma.\]
\item $\bMR := \pi_{\ast, \ast}\RHF$. By a calculation of  Voevodsky \cite{V} $$\bMR \cong \F[\tau, \rho]$$ with  $|\tau| = (0,-1)$ and $|\rho | = (-1,-1)$.  \vspace{2pt}
\item $\bMC := \pi_{\star}\eqHF$. This computation can be found in \cite{Caruso}*{Appendix} or \cite{HK}*{Proposition~6.2} and is given by
\[ \bMC \cong \F[u_{\upsigma}, a_{\upsigma}] \oplus \Theta\{ u_{\upsigma}^{-i} a_{\upsigma}^{-j}: i,j \geq 0\},\]
 where  $|u_{\upsigma}| = (0,-1)$, $|a_{\upsigma}| = (-1, -1)$ and  $|\Theta| = (0,2)$. \vspace{2pt}. 
 \item 
We follow \cites{LowMilnorWitt, BI, BGL} in grading $\Ext_{\cAR}$ as $\Ext^{s, f, w}_{\cAR}$, where $s$ is the stem, $f$ is the Adams filtration, and $w$ is the motivic weight.  \vspace{2pt}
\end{itemize}
\end{notn}

Our first result concerns realizations of $\cAR(1)$.

\begin{thm} \label{thm:128R} There exists $128$ different $\cAR$-modules whose underlying $\cAR(1)$-module structures are free on one generator, all of which can be realized as $\rHR^{*, *}(\mr{X})$ for some $\mr{X} \in \SpfinR$.
\end{thm}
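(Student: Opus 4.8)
The plan is to split the proof into an algebraic enumeration of the relevant $\cAR$-module structures and a topological realization step.

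\textbf{Counting the module structures.} Let $M$ be an $\cAR$-module whose underlying $\cAR(1)$-module is free on a generator $g$ of bidegree $(0,0)$. As a module over $\bMR=\F[\tau,\rho]$, $M$ is then free of rank eight, with generators in the bidegrees of the standard monomial $\bMR$-basis $\{1,\Sq^1,\Sq^2,\Sq^3,\Sq^2\Sq^1,\Sq^3\Sq^1,\Sq^2\Sq^1\Sq^2,\Sq^1\Sq^2\Sq^1\Sq^2\}$ of $\cAR(1)$; in particular those generators lie in topological degrees $0$ through $6$, the top one in bidegree $(6,2)$. Since $\cAR$ is free as a left $\cAR(1)$-module, the whole $\cAR$-module structure on $M$ is determined by the values on $g$ of the $\cAR(1)$-module generators of $\cAR$, and a degree count shows that only finitely many of these can act nontrivially: the unit, together with four generators $b$ in bidegrees $(4,2)$ (where $b=\Sq^4$), $(6,3)$, $(7,3)$ and $(8,4)$ (all higher $\cAR(1)$-module generators of $\cAR$ have degree too large to hit a nonzero group of $M$). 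The corresponding target groups $M_{(4,2)},M_{(6,3)},M_{(7,3)},M_{(8,4)}$ have $\F$-dimensions $3,3,1,1$ respectively --- the excess over the classical dimensions $1,1,0,0$ coming from $\rho$-multiples of lower classes. Thus there are a priori $2^{3+3+1+1}=2^{8}$ potential structures; imposing the $\R$-motivic Adem relations (which relate $\Sq^4$, $\Sq^4\Sq^1$, $\Sq^2\Sq^4$, $\Sq^4\Sq^4$, etc.) together with associativity of the action cuts this down to exactly $2^{7}=128$, refining the classical count of $2^{2}=4$ module structures on $\cA(1)$.

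\textbf{Realizing them.} For each of the $128$ modules $M$, I would invoke the $\R$-motivic analogue of Toda's realization theorem proved earlier in the paper: an $\cAR$-module that is finitely generated and free over $\bMR$ (and concentrated in non-negative degrees) is of the form $\rHR^{\ast,\ast}(X)$ for some $X\in\SpfinR$, provided its realization obstructions --- which lie in $\Ext^{\ast,f,\ast}_{\cAR}(M,M)$ for $f\ge 2$, in a range of degrees depending only on the bidegrees of the $\bMR$-generators of $M$ --- vanish. These obstruction bidegrees are the same for all $128$ modules, so a single verification per module suffices; and since each $M$ restricts to a \emph{free} $\cAR(1)$-module, $\Ext_{\cAR}(M,M)$ can be computed by a change-of-rings argument from the elementary $\cAR(1)$-computation, confirming that nothing lands in the obstruction bidegrees (this is the $\R$-motivic refinement of the classical vanishing behind the realizability of $\cA(1)$). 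Separately, matching the remark in the abstract, each resulting $X$ can also be produced as the cofiber of a $v_1$-self-map of $\Y^\R_{(\hsf,1)}$ or $\Y^\R_{(2,1)}$, or of a map between these two spectra, built from the periodic self-maps of \cite{BGL}; this gives a second, more concrete proof of realizability.

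\textbf{The main difficulty.} The crux is the algebraic enumeration. In contrast with the classical case, $M_{(4,2)}$ and $M_{(6,3)}$ carry $\rho$-torsion classes that are invisible after inverting $\tau$, and the $\R$-motivic Adem relations have $\rho$-dependent correction terms; tracking these carefully enough to see that exactly $2^{7}$ of the $2^{8}$ a priori choices are consistent is where the real work lies. Once the $\R$-motivic Toda theorem and the obstruction vanishing are available, the realization step is essentially formal.
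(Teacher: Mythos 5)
Your algebraic counting step is a legitimate variant of what the paper does in \autoref{list}: your eight a priori bits (three in bidegree $(4,2)$, three in $(6,3)$, one in $(7,3)$, one in $(8,4)$) match the paper's surviving parameters $\{\alpha_{03},\beta_{03},\beta_{14}\}$, $\{\jay_{24},\beta_{25},\beta_{26}\}$, $\{\gamma_{36}\}$, $\{\beta_{06}\}$, and the single relation you need is exactly the Adem relation $\Sq^4\Sq^4=\Sq^2\Sq^4\Sq^2+\tau\Sq^3\Sq^4\Sq^1$ applied to the generator, which forces $\jay_{24}$. (The paper instead parametrizes the $\Sq^4$ and $\Sq^8$ actions on all eight $\bMR$-generators, getting eleven parameters cut by four Adem constraints.) Two caveats: your parametrization silently uses that $\cAR$ is free as a left $\cAR(1)$-module, which over $\bMR=\F[\tau,\rho]$ is not automatic and should be justified or replaced by the paper's Cartan-formula argument; and "Adem plus associativity cuts $2^8$ to exactly $2^7$" is asserted, not derived, so the real content of the enumeration is still outstanding in your write-up, just as you acknowledge.

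The genuine gap is in the realization step. You claim that because $M$ restricts to a free $\cAR(1)$-module, $\Ext_{\cAR}(M,M)$ "can be computed by a change-of-rings argument from the elementary $\cAR(1)$-computation." No such change of rings exists: change-of-rings isomorphisms apply to modules \emph{induced} (or coinduced) from the subalgebra, e.g.\ $\Ext_{\cAR}(\cAR\otimes_{\cAR(1)}N,-)\cong\Ext_{\cAR(1)}(N,-)$, whereas $\cAR_{\overline{\mr{v}}}(1)$ is only restricted-free over $\cAR(1)$ and is certainly not of the form $\cAR\otimes_{\cAR(1)}N$ (that module is infinite); nor is $\cAR(1)$ normal in $\cAR$, so the Cartan--Eilenberg spectral sequence is not available in the form you would need. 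This is precisely the point where the paper has to work: it proves the weak realization theorems \autoref{wTRT1}--\autoref{wTRT3} by filtering the obstruction groups --- an algebraic Atiyah--Hirzebruch filtration of the target reducing to $\Ext_{\cAR}(M,\bMR)$, then the $\rho$-Bockstein reducing to $\Ext_{\cA^\C}(M/(\rho),\bM^\C)$, then the May filtration --- and only at the associated-graded level, where $\gr(\cA^\C_{\overline{\mr{v}}}(1))\cong\Lambda_{\bM^\C}(\upxi_{1,0},\upxi_{1,1},\upxi_{2,0})$ is an exterior subalgebra of $\gr(\cA^\C)$, does a Koszul-type change of rings apply, giving the $E_1$-page $\bM^\C[\sfh_{i,j}]/(\sfh_{1,0},\sfh_{1,1},\sfh_{2,0})$; the needed vanishing in stems $s-2$, weight $w$, $f\geq 3$ for $(s,w)\in\mathcal{D}_{\cAR(1)}$ is then a bidegree inspection. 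Your proposal needs this (or an equivalent vanishing-line argument for $\cAR(1)$-free modules) to be supplied. Finally, your proposed "second, more concrete proof" via cofibers of $v_1$-self-maps is circular as stated: \cite{BGL} provides only the single self-map realizing $\cAR_1[{\bf 0}]$, and in the paper the existence of the other maps (\autoref{thm:exactcofiber}) is \emph{deduced from} the Toda realization of the $\cAR_{\overline{\mr{v}}}(1)$, not the other way around.
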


\begin{figure}[h]
 \[
\begin{tikzpicture}\begin{scope}[thick, every node/.style={sloped,allow upside down}, scale=0.5]
\draw (0,0)  node[inner sep=0] (v00) {} -- (0,1) node[inner sep=0] (v01) {};
\draw (0,2)  node[inner sep=0] (v11) {} -- (0,3) node[inner sep=0] (v12) {};
\draw (1,3)  node[inner sep=0] (v22) {} -- (1,4) node[inner sep=0] (v23) {};
\draw (1,5)  node[inner sep=0] (v33) {} -- (1,6) node[inner sep=0] (v34) {};
 \draw [color=blue] (v00) to [out=150,in=-150] (v11);
 \draw [color=blue] (v01) to [out=15,in=-90] (v22);
 \draw [color=blue] (v12) to [out=90,in=-165] (v33);
 \draw [color=blue] (v23) to [out=30,in=-30] (v34);
 \draw [dotted][color=blue] (v11) to [out=15,in=-150] (.8, 4);
\filldraw (v00) circle (2.5pt);
\filldraw (v01) circle (2.5pt);
\filldraw (v11) circle (2.5pt);
\filldraw (v12) circle (2.5pt);
\filldraw (v22) circle (2.5pt);
\filldraw (v23) circle (2.5pt);
\filldraw (v33) circle (2.5pt);
\filldraw (v34) circle (2.5pt);
\draw (0,0) node[right]{$ x_{0,0}$} (0,1) node[right]{ \ \ $x_{1,0}$} (0,2) node[left,xshift={1pt}]{$x_{2,1}  \ $ } (0,3) node[left]{$x_{3,1} $  };
\draw (1,6) node[right]{$\ y_{6,2}$} (1.3,5.3) node[left]{$y_{5,2}  \ $} (1,4) node[right]{$ \ y_{4,1} $} (1,3) node[right]{$ y_{3,1}$};
\end{scope}\end{tikzpicture}
\]
\caption{ We depict a singly-generated free $\cAR(1)$-module,  where  each $\bullet$ represents a $\bM^\R$-generator. The black and blue lines represent the action of  motivic $\Sq^1$ and $\Sq^2$, respectively. A dotted line represents that the action hits the $\tau$-multiple of the given $\mathbb{M}_2^\R$-generator. }
\label{fig:motivicA1}
 \end{figure}
 
\begin{notn} 
For the rest of the paper, we fix an $\bM^\R$-basis 
\[ \{ x_{0,0}, x_{1,0}, x_{2,1}, x_{3,1},y_{3,1}, y_{4,1}, y_{5,2}, y_{6,2} \} \]
 of $\cAR(1)$ {as in \autoref{fig:motivicA1}}, so that 
\begin{multicols}{3}
\begin{itemize}
\item $\Sq^1(x_{0,0}) = x_{1,0}$
\item $\Sq^1(x_{2,1}) = x_{3,1}$
\item $\Sq^1(y_{3,1}) = y_{4,1}$
\end{itemize}
\columnbreak
\begin{itemize}
\item $\Sq^1(y_{5,2}) = y_{6,2}$
\item $\Sq^2(x_{0,0}) = x_{2,1}$
\item $\Sq^2(x_{1,0}) = y_{3,1}$
\end{itemize}
\columnbreak
\begin{itemize}
\item $\Sq^2(x_{2,1}) = \tau y_{4,1}$
\item $\Sq^2(x_{3,1}) = y_{5,2}$
\item $\Sq^{2}(y_{4,1}) = y_{6,2}$.
\end{itemize}
\end{multicols}
\end{notn}

We now record all $128$ $\cAR$-modules of \autoref{thm:128R}  using the basis above.

\begin{thm}  \label{list}
For every vector  $ ( \alpha_{03}, \beta_{03},\beta_{14}, \beta_{06}, \beta_{25}, \beta_{26}, \gamma_{36}) \in \mathcal{V} = \F^7$ and 
\[\jay_{24}= \beta_{03}  \gamma_{36} + \alpha_{03}(\beta_{25} + \beta_{26}),\]
 there exists a unique isomorphism class of $\cAR$-module structures on $\cAR(1)$ determined by the formulas
\begin{enumerate}[(i)]
\item $\Sq^4(x_{0,0}) = \beta_{03} ( \rho \cdot   y_{3,1}) +  (1+ \beta_{03} + \beta_{14}) ( \tau \cdot   y_{4,1}) + \alpha_{03}( \rho \cdot  x_{3,1})$
\item $\Sq^{4}(x_{1,0}) = y_{5,2} + \beta_{14} ( \rho \cdot   y_{4,1})  $ 
\item $\Sq^4(x_{2,1}) = \beta_{26}(\tau \cdot y_{6,2}) + \beta_{25} (\rho \cdot y_{5,2}) + \jay_{24} (\rho^2 \cdot y_{4,1})$
\item $\Sq^4(x_{3,1}) = (\beta_{25} + \beta_{26})(\rho \cdot y_{6,2})$
\item $\Sq^4(y_{3,1})= \gamma_{36}(\rho \cdot y_{6,2})$
\item $\Sq^8(x_{0,0}) = \beta_{06} (\rho^2 \cdot y_{6,2})$.
\end{enumerate}
Further, any $\cAR$-module whose underlying $\cAR(1)$-module is free on one generator is isomorphic to one listed above. 
\end{thm}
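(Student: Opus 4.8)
The plan is to determine all possible $\cAR$-module structures on the fixed underlying $\cAR(1)$-module by a direct, finite computation, exploiting the fact that $\cAR$ is generated (over the admissibles, or as an algebra over $\bM^\R$ by the $\Sq^{2^i}$) so that an $\cAR$-module structure extending the given $\cAR(1)$-structure is completely determined by the values of $\Sq^4$ and $\Sq^8$ on the $\bM^\R$-basis, together with $\Sq^{2^i}$ for $i \geq 4$. First I would observe that $\cAR(1)$ is concentrated in a narrow range of stems (degrees $0$ through $6$), so for degree reasons $\Sq^{2^i}$ acts as zero on all generators once $i \geq 4$, and $\Sq^8$ can only be nonzero on $x_{0,0}$ (landing in the top class $y_{6,2}$, up to $\tau,\rho$-multiples). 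Thus the only genuinely free parameters are the coefficients (in $\bM^\R$, of the appropriate bidegree) of $\Sq^4$ on each of the eight basis elements and of $\Sq^8$ on $x_{0,0}$. A bidegree bookkeeping — using $|\tau| = (0,-1)$, $|\rho| = (-1,-1)$, and the stated bidegrees of the basis elements — cuts this down to the finitely many coefficients appearing in (i)--(vi); most potential targets are ruled out because no monomial in $\tau,\rho$ has the right bidegree, or because of $\bM^\R$-linearity (e.g.\ $\tau$- or $\rho$-divisibility constraints).

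Next I would impose the Adem relations. The relevant ones in this range are $\Sq^1\Sq^1 = 0$, $\Sq^1\Sq^2 = \Sq^3$, $\Sq^2\Sq^2 = \tau\Sq^3\Sq^1$ (the $\R$-motivic Adem relation, with its $\tau$), $\Sq^1\Sq^4 = \Sq^5$, $\Sq^2\Sq^3 = \Sq^5 + \Sq^4\Sq^1$ (or its motivic form), and the higher relations such as $\Sq^2\Sq^4 = \Sq^6 + \Sq^5\Sq^1$, $\Sq^1\Sq^1\Sq^4=0$, etc. Applying each relation to each basis element yields a system of $\F$-linear equations among the unknown coefficients. The bookkeeping is organized so that, after solving, the free parameters are exactly $(\alpha_{03}, \beta_{03}, \beta_{14}, \beta_{06}, \beta_{25}, \beta_{26}, \gamma_{36}) \in \F^7$, and one derived quantity $\jay_{24} = \beta_{03}\gamma_{36} + \alpha_{03}(\beta_{25} + \beta_{26})$ is forced (this quadratic term is the signature of the constraint coming from $\Sq^2\Sq^2 = \tau\Sq^3\Sq^1$ applied to $x_{0,0}$, combined with the relation expressing $\Sq^4\Sq^2$). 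I would present this as: (a) write the general $\Sq^4$ and $\Sq^8$ with undetermined $\bM^\R$-coefficients consistent with bidegree; (b) run through the Adem relations, recording the linear (and one quadratic) constraints; (c) read off that the solution space is parametrized by $\mathcal{V} = \F^7$ with the displayed formulas.

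Finally I would address the \emph{uniqueness up to isomorphism} claim: two such module structures corresponding to vectors $v, v'$ give isomorphic $\cAR$-modules iff there is an $\cAR(1)$-linear automorphism of the underlying module carrying one action to the other. Since the underlying $\cAR(1)$-module is free on a single generator $x_{0,0}$, any such automorphism is (up to $\bM^\R$-scaling, which is trivial over the graded field-like pieces in the relevant degrees) determined by the image of $x_{0,0}$, which must be $x_{0,0}$ plus a decomposable of the same bidegree; but $x_{0,0}$ sits in bidegree $(0,0)$ and there are no $\bM^\R$-decomposables in that bidegree, so the only automorphism is the identity. Hence distinct vectors in $\mathcal{V}$ give non-isomorphic modules, and the $2^7 = 128$ count of \autoref{thm:128R} follows. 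I expect the main obstacle to be the Adem-relation bookkeeping in step (b): keeping track of the $\tau$'s in the $\R$-motivic Adem relations and correctly propagating them through iterated operations (so that, e.g., the coefficient of $\tau y_{4,1}$ in $\Sq^4(x_{0,0})$ comes out as $1 + \beta_{03} + \beta_{14}$ rather than something else) is where sign/coefficient errors would creep in, and it is the part that genuinely requires care rather than routine verification.
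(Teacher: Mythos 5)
Your approach is essentially the paper's: write down the most general bidegree-permitted $\Sq^4$ and $\Sq^8$ actions on the $\bM^\R$-generators (the paper does this by inspecting \autoref{fig:motivicA1Roots}) and then impose the Adem relations, the paper obtaining the linear constraints from $\Sq^2\Sq^3 = \Sq^5 + \Sq^4\Sq^1 + \rho\Sq^3\Sq^1$ applied to $x_{0,0}$ and $x_{2,1}$, and the quadratic constraint defining $\jay_{24}$ from $\Sq^4\Sq^4 = \Sq^2\Sq^4\Sq^2 + \tau\Sq^3\Sq^4\Sq^1$ applied to $x_{0,0}$ --- not, as you guessed, from $\Sq^2\Sq^2 = \tau\Sq^3\Sq^1$ combined with a relation ``expressing $\Sq^4\Sq^2$'' (which is admissible, so no such relation exists), but since your plan is to run through all Adem relations in the relevant range, this misattribution is not a gap. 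Your closing argument that distinct vectors give non-isomorphic modules, via the observation that the underlying free $\cAR(1)$-module admits only the identity automorphism in bidegree $(0,0)$, is correct and supplies a detail the paper leaves implicit.
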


 \begin{notn} For any vector $\overline{\mr{v}} \in \mathcal{V}$, we denote the corresponding $\cAR$-module in  $\autoref{list}$ by $\cAR_{ \overline{\mr{v}}}(1)$. 
By $\cAR_{1}[\overline{\mr{v}}]$, 
 we denote an object of $\Sp^\R_{2,\mr{fin}}$, whose cohomology is isomorphic to $\cAR_{ \overline{\mr{v}}}(1)$ as an $\cAR$-module. We let 
 \[ \cAR_1 := \{ \cAR_{1} [\overline{\mr{v}}]: \overline{\mr{v}} \in \mathcal{V}\}/(\text{weak equivalence}) \]
 denote the set of equivalence classes of finite $\R$-motivic spectra whose cohomology are free of rank 1 over $\cAR(1)$.
 \end{notn}
 
Let $\mathcal{B}_{{\sf h}}^\R(1)$ and $\mathcal{B}_{2}^{\R}(1)$ denote the $\cA^{\R}$-modules $\rHR^{*,*}(\Y^\R_{({\sf h}, 1)})$ and $\rHR^{*,*}(\Y^\R_{(2, 1)})$, respectively.  As shown in \cite{BGL}*{Lemma~4.6}, these differ in that the bottom cell of $\Y^\R_{(2, 1)}$ supports a $\Sq^4$, whereas the bottom cell of $\Y^\R_{({\sf h}, 1)}$ does not.
In \cite{BGL}, we  used a  method due to Smith (\cite{Rav}*{Appendix C}) to produce the $\cAR$-module $\cAR_{1}[{\bf 0}]$. Then 
{we} observed that $\cAR_{1}[{\bf 0}]$ fits into a short exact sequence  
\[ 
\begin{tikzcd}
 \Sigma^{3,1} \mathcal{B}_{{\sf h}}^\R(1) \rar[hook] &\cAR_{1}[{\bf 0}] \rar[two heads]  & \mathcal{B}_{{\sf h}}^\R(1) 
\end{tikzcd}
\]
that can be realized as a cofiber sequence of finite spectra. The connecting map of this cofiber sequence is the map \eqref{prevselfmap}. In this paper, we extend the above result of \cite{BGL} to prove the following. 

\begin{thm}\label{thm:exactcofiber}
Given  $\overline{\mr{v}} = ( \alpha_{03}, \beta_{03},\beta_{14}, \beta_{06}, \beta_{25}, \beta_{26}, \gamma_{36})  \in \mathcal{V}$, define 
\[ \epsilon = \begin{cases} 
{\sf h} &  \text{if\ } \beta_{25} + \beta_{26} + \gamma_{36} = 0 \\
2 & \text{if\ } \beta_{25} + \beta_{26} + \gamma_{36} = 1,
\end{cases} 
\qquad \text{and} \qquad
 \delta = \begin{cases}
{\sf h} & \text{if\ }  \alpha_{03} + \beta_{03}= 0 \\
2 & \text{if\ } \alpha_{03} + \beta_{03} = 1.
\end{cases} \]
Then there  exists a short exact sequence 
\begin{equation} \label{exactBAB}
\begin{tikzcd}
 \Sigma^{3,1} \mathcal{B}_{\epsilon}^\R(1) \rar[hook] &\cAR_{\overline{\mr{v}}}(1) \rar[two heads]  & \cB_{\delta}^\R(1)
 \end{tikzcd}
 \end{equation}
 of $\cAR$-modules. Moreover, this exact sequence can be realized as the cohomology of a cofiber sequence 
\begin{equation}
\label{cofiberYA1}
\begin{tikzcd}
\mathcal{Y}_{(\delta, 1)}^\R  \rar &\cAR_{1}[\overline{\mr{v}}] \rar & \Sigma^{3,1} \mathcal{Y}_{(\epsilon , 1)} ^\R
 \end{tikzcd}
\end{equation}
 in the category $\SpfinR$. 
\end{thm}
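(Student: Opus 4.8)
The plan is to produce the cofiber sequence \eqref{cofiberYA1} geometrically first, and then extract the short exact sequence \eqref{exactBAB} by applying $\rHR^{*,*}(-)$. The starting point is that every $\cAR_1[\overline{\mr{v}}]$ has underlying $\cAR(1)$-module free on one generator, so its cells sit in the same bidegrees as those of $\cAR(1)$, which splits (as $\cAR(1)$-modules, not $\cAR$-modules) into the subquotient copies of $\mathcal{B}^\R_\bullet(1)$ corresponding to the splitting $\cAR(1) = \Sigma^{3,1}\cAR(1)\!\mm\!\cAR(0) \oplus \cAR(1)\!\mm\!\cAR(0)$-type picture visible in \autoref{fig:motivicA1}: the four "$x$" generators $\{x_{0,0},x_{1,0},x_{2,1},x_{3,1}\}$ span a submodule isomorphic to one of $\cB^\R_{\sf h}(1)$ or $\cB^\R_2(1)$, and the four "$y$" generators span the quotient, shifted by $(3,1)$. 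So the cofiber sequence, if it exists, must have the stated form on homotopy types; the content is realizing it.

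The key steps, in order. First, I would determine, purely algebraically inside $\cAR_{\overline{\mr{v}}}(1)$, which of $\cB^\R_{\sf h}(1)$, $\cB^\R_2(1)$ the sub- and quotient-modules are: this is exactly the computation of whether the relevant bottom cell supports a $\Sq^4$, and by the formulas of \autoref{list} the quotient-module's bottom cell $y_{3,1}$ supports $\Sq^4 y_{3,1} = \gamma_{36}(\rho\cdot y_{6,2})$ and also receives contributions governing $\Sq^4 x_{2,1}, \Sq^4 x_{3,1}$ — reconciling these with the $\cAR(0)$-module extension structure gives the condition $\beta_{25}+\beta_{26}+\gamma_{36}$ for $\epsilon$, while the submodule's bottom cell $x_{0,0}$ has $\Sq^4 x_{0,0}$ with $\rho\cdot x_{3,1}$-coefficient $\alpha_{03}$ (and the $\tau y_{4,1}$-term, which lies in the quotient, does not affect the submodule), giving the condition $\alpha_{03}+\beta_{03}$ for $\delta$; I expect the $\beta_{03}$ correction comes from the choice of $\cAR(1)$-module splitting. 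This establishes \eqref{exactBAB} as an exact sequence of $\cAR$-modules. Second, I would invoke the $\R$-motivic Toda realization theorem (the tool promised in the introduction for \autoref{thm:128R}) to know that $\cAR_1[\overline{\mr{v}}] \in \SpfinR$ exists; but rather than build it from scratch, I would build the desired \emph{map} $\mathcal{Y}^\R_{(\delta,1)} \to \cAR_1[\overline{\mr{v}}]$ realizing the inclusion of the submodule, using that $\mathcal{Y}^\R_{(\delta,1)}$ is a small cell complex and obstruction theory for maps into $\rHR$-complete spectra: the obstructions to lifting the algebraic map live in $\Ext_{\cAR}$-groups in negative stems, which vanish by connectivity/sparsity of the cells involved. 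Third, I would identify the cofiber of that map: its cohomology is the quotient $\cAR_{\overline{\mr{v}}}(1)/\Sigma^{3,1}\cB^\R_\epsilon(1) \cong \Sigma^{3,1}\cB^\R_\epsilon(1)$ — wait, more precisely the cofiber has cohomology $\Sigma^{3,1}\cB^\R_\epsilon(1)$, and one must recognize that an object of $\SpfinR$ with this cohomology is $\Sigma^{3,1}\mathcal{Y}^\R_{(\epsilon,1)}$; this uniqueness is where I would cite the rigidity of $\mathcal{Y}^\R_{(\epsilon,1)}$ established in \cite{BGL} (there are only the two lifts, distinguished by the $\Sq^4$ on the bottom cell, and the cohomology determines which). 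Applying $\rHR^{*,*}$ to the cofiber sequence then recovers \eqref{exactBAB} up to the extension being the specified one, which is forced since the maps of spectra induce the submodule inclusion and quotient projection.

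The main obstacle I anticipate is the second step's bookkeeping: verifying that the relevant Ext obstruction groups vanish so that the algebraic inclusion $\Sigma^{3,1}\cB^\R_\epsilon(1)\hookrightarrow \cAR_{\overline{\mr{v}}}(1)$ lifts to an actual map of spectra, and — more delicately — that one may \emph{choose} the realization $\cAR_1[\overline{\mr{v}}]$ compatibly, i.e. that the Toda realization and the map realization can be done simultaneously. In the classical case this is handled by taking the cofiber of a $v_1$-self-map as the definition; here the subtlety is that for some $\overline{\mr{v}}$ we genuinely need a map \emph{between} the two distinct lifts $\mathcal{Y}^\R_{(2,1)}$ and $\mathcal{Y}^\R_{({\sf h},1)}$ rather than a self-map (precisely when $\epsilon\neq\delta$), so the relevant group is $[\mathcal{Y}^\R_{(\delta,1)},\Sigma^{?,?}\mathcal{Y}^\R_{(\epsilon,1)}]$, whose structure I would extract from the $\Ext$ charts already computed in \cite{BGL}. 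Secondarily, checking that the algebraic extension in \eqref{exactBAB} is non-split exactly as encoded by $\overline{\mr{v}}$ — rather than merely abstractly isomorphic — requires tracking the $\Sq^4$ and $\Sq^8$ coefficients through the splitting, which is routine but must be done carefully to match the case division for $\epsilon$ and $\delta$.
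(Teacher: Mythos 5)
Your topological strategy is essentially the paper's: fix any realization $\cAR_1[\overline{\mr{v}}]$ supplied by \autoref{thm:128R}, realize one of the two maps of \eqref{exactBAB} by a map of spectra $\Y^\R_{(\delta,1)}\to \cAR_1[\overline{\mr{v}}]$ (note that such a map induces the \emph{projection} $\cAR_{\overline{\mr{v}}}(1)\twoheadrightarrow \cB^\R_{\delta}(1)$ on cohomology, not the submodule inclusion as you write), and identify its cofiber with $\Sigma^{3,1}\Y^\R_{(\epsilon,1)}$ using unique realizability of $\cB^\R_{\epsilon}(1)$. The paper gets the map by showing the class in degree $(0,0,0)$ of $\Ext_{\cAR}(\cAR_{\overline{\mr{v}}}(1),\cB^\R_{\delta}(1))$ is a permanent cycle in the Adams spectral sequence, because the groups in degree $(-1,f,0)$, $f\ge 2$, vanish (checked via the May, $\rho$-Bockstein and algebraic Atiyah--Hirzebruch spectral sequences, rather than the charts of \cite{BGL}); uniqueness of the realizations of $\cB^\R_{\sfh}(1)$ and $\cB^\R_{2}(1)$ is \autoref{uniqueB}, proved from \autoref{wTRT3}, not quoted from \cite{BGL}. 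Your worry about carrying out the Toda realization and the map realization ``simultaneously'' is a non-issue: any realization works, since the cofiber is identified afterwards by uniqueness.

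The genuine gap is in the algebraic half. Your structural claim is false: $\cAR(1)$ does not split as an $\cAR(1)$-module (it is free on one generator, which lies in degree $(0,0)$, so it cannot be a sum of two rank-four pieces), and the classes $x_{0,0},x_{1,0},x_{2,1},x_{3,1}$ do not span a submodule, since $\Sq^2x_{1,0}=y_{3,1}$, $\Sq^2x_{2,1}=\tau y_{4,1}$ and $\Sq^2x_{3,1}=y_{5,2}$ all land among the $y$'s; it is the $y$-part that is the submodule and the $x$-part the quotient, the reverse of what you assert. More seriously, the ``obvious'' submodule generated by $y_{3,1}$ does not do the job: killing it forces $\Sq^2$ to vanish from degree $(1,0)$ to degree $(3,1)$ in the quotient, which is therefore isomorphic to neither $\cB^\R_{\sfh}(1)$ nor $\cB^\R_{2}(1)$. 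The one substantive idea of the algebraic step, which you gesture at (``the $\beta_{03}$ correction comes from the choice of splitting'') but never supply, is to embed $\Sigma^{3,1}\cB^\R_{\epsilon}(1)$ by sending its generator to $x_{3,1}+y_{3,1}$. Then $\Sq^4(x_{3,1}+y_{3,1})=(\beta_{25}+\beta_{26}+\gamma_{36})\,\rho\, y_{6,2}$ produces the case division for $\epsilon$, while in the quotient $y_{3,1}\equiv x_{3,1}$, so the quotient has the required $\Sq^2$ from $x_{1,0}$ to $x_{3,1}$ and $\Sq^4x_{0,0}\equiv(\alpha_{03}+\beta_{03})\,\rho\, x_{3,1}$ produces the case division for $\delta$. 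Without exhibiting this map (or an equivalent choice), the identification of $\epsilon$ and $\delta$ --- the actual content of \eqref{exactBAB} --- is not established, so the algebraic part of your argument is incomplete.
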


The map of spectra that underlies the  connecting map 
\begin{equation} \label{v1nil}
\begin{tikzcd}
v\colon \Sigma^{2,1} \mathcal{Y}_{(\epsilon , 1)}^\R \rar & \mathcal{Y}_{(\delta, 1)}^\R 
 \end{tikzcd}
\end{equation}
of \eqref{cofiberYA1} 
is a   $v_1$-self-map of $\Y$ of periodicity $1$.

The algebraic part of \autoref{thm:exactcofiber} is a straightforward consequence of \autoref{list} once we identify the $\cAR$-modules $\mathcal{B}_{{\sf h}}^\R(1)$ and $\mathcal{B}_{2}^\R( 1)$ (we refer to \cite{BGL}*{Figure 4.7} for a complete description). However, the topological assertions in \autoref{thm:exactcofiber}, as well as  in \autoref{thm:128R}, require a technical tool, which we refer to as the  \emph{$\R$-motivic Toda realization theorem.}

\begin{thm}[$\R$-motivic Toda realization theorem] \label{TRT} Let $\mr{M}$ be an $\cAR$-module whose underlying $\bMR$-module is free and finite. There exists $\rX \in \SpfinR$  such that $\rHR^{*,*}(\rX) \cong \mr{M}$ as $\cAR$-modules if 
\begin{equation}\label{Obgroup}
 \Ext^{-2, f, 0}_{\cAR}(\mr{M}, \mr{M}) = 0
 \end{equation}
for all  $f \geq 3$. 
\end{thm}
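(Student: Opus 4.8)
The plan is to mimic the classical Toda realization theorem via obstruction theory, building $\rX$ as a finite $\R$-motivic cell complex one cell at a time by induction on the $\bMR$-module generators of $\mr{M}$, which we may order by topological degree. Since $\mr{M}$ is free and finite over $\bMR$, choose generators $m_1, \dots, m_n$ with $|m_1| \le \dots \le |m_n|$ (in total degree, breaking ties arbitrarily). Let $\mr{M}_k$ be the $\cAR$-submodule generated by $m_1, \dots, m_k$ — one must first check that each $\mr{M}_k$ is again $\bMR$-free, which follows because the Steenrod action raises degree and $\bMR$ is concentrated in non-positive degrees, so the top generator cannot appear in the image of any operation on a lower one. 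We will inductively produce $\rX_k \in \SpfinR$ with $\rHR^{*,*}(\rX_k) \cong \mr{M}_k$, starting from $\rX_1 = \Sigma^{|m_1|} \RHF$-cell (i.e.\ a motivic sphere $\Sigma^{|m_1|}\bS^{0,0}$, noting $\mr{M}_1$ is free of rank one on a single generator so it is just a shifted sphere after $\RHF$-completion).

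For the inductive step, suppose $\rX_k$ is built and we wish to attach a cell in degree $|m_{k+1}|$. An $\cAR$-module extension
\[
\begin{tikzcd}
\Sigma^{|m_{k+1}|}\bMR \rar[hook] & \mr{M}_{k+1} \rar[two heads] & \mr{M}_k
\end{tikzcd}
\]
is classified by a map $\mr{M}_k \to \Sigma^{|m_{k+1}|+1,*}\bMR$ in the derived category of $\cAR$-modules, i.e.\ by an element of $\Ext^{1,*}_{\cAR}(\mr{M}_k, \Sigma^{|m_{k+1}|}\bMR)$; the data of which $\cAR$-module we are realizing pins this down. To realize it topologically we need a map $\rX_k \to \Sigma^{|m_{k+1}|+1,*}\RHF$-type target — more precisely we need a lift of the algebraic attaching class to a map $\rX_k \to \Sigma^{|m_{k+1}|, *}\bS_{\RHF}$ (the $\RHF$-completed sphere), so that $\rX_{k+1}$ is its cofiber. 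The obstruction to realizing the class, and then to the cofiber having the correct $\cAR$-module cohomology, lives in an Adams spectral sequence $\Ext^{s,f,*}_{\cAR}(\mr{M}_k, \bMR) \Rightarrow [\rX_k, \Sigma^{*,*}\bS_{\RHF}]$; the relevant obstructions to existence of the map and to correctness are the potential differentials and hidden extensions affecting the line $s$ equal to the degree of the attaching class. Splicing these together over all $k$, and tracking that changing generators or the order changes the filtration but not the outcome, the total obstruction to realizing $\mr{M}$ assembles into $\Ext^{-2,f,0}_{\cAR}(\mr{M}, \mr{M})$ for $f \ge 3$: the internal degree $-2$ (topological stem, weight $0$) records that an attaching map of a cell lands two stems below where a map into the cofiber would, and $f \ge 3$ because the $f \le 2$ part is either the algebraic extension data itself (which we get to choose) or is killed by the freeness hypothesis. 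So the vanishing \eqref{Obgroup} kills every obstruction and the induction goes through, with $\rX = \rX_n$.

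The main obstacle — and the step requiring genuine $\R$-motivic input rather than a formal transcription — is establishing that the $\R$-motivic Adams spectral sequence for maps into $\bS_{\RHF}$ behaves well enough for this argument: one needs convergence (hence the restriction to $\SpfinR$, the cellular $\RHF$-complete finite spectra, where the $\RHF$-Adams tower converges), one needs the identification of the $E_2$-page with $\Ext_{\cAR}$ over the motivic Steenrod algebra with its bigrading $(s,f,w)$, and one needs to confirm that the bookkeeping of the attaching-map degree really produces the shift to stem $-2$ and weight $0$ in $\Ext_{\cAR}(\mr{M},\mr{M})$ — the weight constraint is the subtle point, as it is precisely the extra grading absent in the classical case and it is what makes the hypothesis as weak as it is. I would handle convergence by citing the standard theory of the $\RHF$-based Adams spectral sequence for finite cellular motivic spectra, and handle the degree bookkeeping by carefully running the obstruction theory for a single cell attachment first (recovering a motivic analogue of the classical statement that the obstruction to extending a partial resolution lies in $\Ext^{s+2}$ of the relevant bimodule), then observing that because $\mr{M}$ is $\bMR$-free, $\Hom_{\bMR}(\mr{M},\mr{M})$ is free and the change-of-rings / universal coefficient comparison identifies the obstruction group with a summand of $\Ext^{-2,\ge 3,0}_{\cAR}(\mr{M},\mr{M})$. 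The freeness hypothesis on $\mr{M}$ over $\bMR$ is used twice: to guarantee the intermediate modules $\mr{M}_k$ are free (so the cell-by-cell construction stays within the hypotheses), and to make the obstruction group a manageable $\Ext$ of $\cAR$-modules rather than something involving higher $\mathrm{Tor}$ over $\bMR$.
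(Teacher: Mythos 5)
There is a genuine gap, and it comes in two layers. First, your filtration is set up backwards. The $\cAR$-submodule of $\mr{M}$ generated by the low-degree generators $m_1,\dots,m_k$ is not ``the first $k$ cells'': since Steenrod operations raise degree, acting on a low generator can land on $\tau$- or $\rho$-multiples of \emph{higher} generators (already in $\cAR(1)$ one has $\Sq^2(x_{2,1}) = \tau\, y_{4,1}$), so your $\mr{M}_k$ is typically not $\bMR$-free of rank $k$ — for a cyclic module such as $\cAR(1)$ one has $\mr{M}_1 = \mr{M}$ — and the claimed short exact sequence $\Sigma^{|m_{k+1}|}\bMR \hookrightarrow \mr{M}_{k+1} \twoheadrightarrow \mr{M}_k$ does not hold (the rank-one piece generated by the top cell is the submodule, the lower cells form the quotient). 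The filtration that actually works is the one the paper uses in Section 2.1: the sub-$\cAR$-modules $\mr{M}_{\geq n}$ spanned by the \emph{top} generators, with the bottom-cell quotients $\mr{M}/\mr{M}_{\geq n}$ being realized inductively.

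Second, and more fundamentally, even after fixing the filtration the obstruction at each cell-attachment stage lives in a group of the form $\Ext^{s-2,f,w}_{\cAR}(\mr{M}_k,\bMR)$ with $(s,w)$ the bidegree of the cell being attached — this is exactly the hypothesis of the paper's weak version \autoref{wTRT1}, which is strictly stronger than \eqref{Obgroup}. Your claim that these stage-wise obstructions ``assemble into'' $\Ext^{-2,f,0}_{\cAR}(\mr{M},\mr{M})$ is asserted rather than proved, and it is false as a direct implication: the algebraic Atiyah--Hirzebruch spectral sequence \eqref{aAHSS} only shows that vanishing of the cell-wise groups forces vanishing of $\Ext^{-2,f,0}_{\cAR}(\mr{M},\mr{M})$, not the converse, so \eqref{Obgroup} does not kill the obstructions in your induction; a nonzero stage-wise obstruction could be a class that dies in the spectral sequence, and handling it requires revising earlier choices. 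Organizing those revisions into a single obstruction class in $\Ext^{-2,f,0}_{\cAR}(\mr{M},\mr{M})$ is precisely the content of Goerss--Hopkins-type obstruction theory, which is how the paper proceeds: it applies the abstract Goerss--Hopkins theory of Pstragowski--VanKoughnett to $\mathcal{C} = \SpfinR$ and $\mr{A} = \bS_{\RHF}$, dualizing $\mr{M}$ to the comodule $\mr{K} = \hom_{\bMR}(\mr{M},\bMR)$, with obstruction classes $\uptheta_n \in \Ext^{-2,n+2,0}_{\cA^\R_*}(\mr{K},\mr{K})$ and the weight-$0$ constraint coming from the fact that the $t$-structure does not change motivic weight (alternatively, one can emulate Toda's classical argument as in \cite{BE20}, which inducts on Adams filtration while realizing the whole module at each stage, not cell by cell). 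Your convergence and weight-bookkeeping remarks are reasonable, but they do not fill this central gap.
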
  

In this paper,  we also prove various weaker versions of the $\R$-motivic Toda realization theorem (see \autoref{wTRT1}, \autoref{wTRT2} and \autoref{wTRT3}), which are perhaps more convenient for application purposes. 

A realization theorem is often accompanied by a uniqueness theorem, as is the case with Toda's classical result (see \cite{BE20}*{Proposition~5.1}). The $\R$-motivic analogue can be stated as follows:

\begin{thm}[$\R$-motivic unique realization theorem]  \label{TUT} Let $\rX \in \Sp^\R_{2, \mr{fin}} $ such that 
\[ \Ext^{-1, f, 0}_{\cAR}(\rHR^{*,*}(\rX), \rHR^{*,*}(\rX) ) = 0\]
for any $f \geq 2$. Then any spectrum $\rX'  \in \SpfinR$  for which there exists an $\cAR$-module isomorphism $\rHR^{*,*}(\rX' ) \cong \rHR^{*,*}(\rX )$, is weakly equivalent to $\rX$. {In other words, the $\cAR$-module $\rHR^{*,*}(\rX )$ is uniquely realized in $\SpfinR$ up to a weak equivalence. }
\end{thm}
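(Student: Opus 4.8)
The plan is to mimic the classical Obstruction-theoretic proof of Toda's uniqueness result (as in \cite{BE20}*{Proposition~5.1}), adapted to the $\R$-motivic setting, and to run it through the same Adams-type resolution machinery that underlies \autoref{TRT}. Both $\rX$ and $\rX'$ are, by hypothesis, objects of $\SpfinR$ — in particular they are cellular and $\RHF$-complete — and we are given an $\cAR$-module isomorphism $\varphi\colon \rHR^{*,*}(\rX')\xrightarrow{\ \cong\ }\rHR^{*,*}(\rX)$. I would like to realize $\varphi$ by an actual map of $\R$-motivic spectra $f\colon \rX'\to\rX$, and then show any such $f$ is automatically a weak equivalence.

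First I would set up the relevant obstruction theory. Since $\rHR^{*,*}(\rX)$ is free and finite over $\bMR$ (this follows because $\rX\in\SpfinR$ has finitely many cells, so its cohomology is a finite free $\bMR$-module — one can reduce to this case, or it is implicit in how these spectra arise as realizations), the $\R$-motivic Adams spectral sequence computing $[\rX',\rX]_{*,*}$ (after $\RHF$-completion, which is automatic here) has $E_2$-page $\Ext_{\cAR}^{*,*,*}(\rHR^{*,*}(\rX),\rHR^{*,*}(\rX'))$. The isomorphism $\varphi$ defines a class in $\Ext^{0}$ in the appropriate internal degree $(0,0,0)$ (bidegree $(s,w)=(0,0)$, filtration $f=0$); the obstructions to lifting this permanent-cycle candidate to an actual map $f$ realizing $\varphi$ on cohomology live in the groups $\Ext^{-1,f,0}_{\cAR}(\rHR^{*,*}(\rX),\rHR^{*,*}(\rX'))$ for $f\geq 2$ — these house the possible Adams differentials $d_f$ emanating from the class $[\varphi]$. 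The hypothesis says exactly that these groups vanish (using $\varphi$ to identify $\rHR^{*,*}(\rX')$ with $\rHR^{*,*}(\rX)$ as $\cAR$-modules, so that $\Ext_{\cAR}(\rHR^{*,*}(\rX),\rHR^{*,*}(\rX'))\cong\Ext_{\cAR}(\rHR^{*,*}(\rX),\rHR^{*,*}(\rX))$). Hence $[\varphi]$ is a permanent cycle and, since there is nothing in positive filtration in stem $-1$ that could obstruct convergence in this range either, it is detected by a genuine map $f\colon\rX'\to\rX$ with $f^* = \varphi$ on $\rHR^{*,*}$.

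Next I would show that such an $f$ is a weak equivalence. Form the cofiber $\rC = \cofib(f)$, which again lies in $\SpfinR$. The long exact sequence in $\rHR^{*,*}$ associated to $\rX'\to\rX\to\rC$, together with the fact that $f^*=\varphi$ is an isomorphism, forces $\rHR^{*,*}(\rC)=0$. For cellular $\RHF$-complete $\R$-motivic spectra, $\rHR^{*,*}(\rC)=0$ implies $\rC\simeq 0$ — this is the $\R$-motivic analogue of the statement that a connective spectrum with vanishing mod-$2$ homology (after completion) is contractible, and it holds in $\SpfinR$ because the $\R$-motivic Adams spectral sequence for $\rC$ converges and has trivial $E_2$-page. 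Therefore $f$ is an equivalence and $\rX'\simeq\rX$.

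The main obstacle is the bookkeeping around convergence and the precise indexing of the obstruction groups: one must be careful that (a) the relevant $\Ext$ groups really are $\Ext^{-1,f,0}_{\cAR}$ in the paper's $(s,f,w)$ grading — i.e. that realizing a degree-$(0,0)$ cohomology isomorphism is obstructed by classes in stem $-1$ and weight $0$, which is where a $d_f$ differential out of filtration $0$ lands — and (b) that the $\R$-motivic Adams spectral sequence for maps between objects of $\SpfinR$ converges, so that a permanent cycle in filtration $0$ is genuinely hit by a map and so that a spectrum with vanishing cohomology is contractible. Both of these convergence statements should already be available from the apparatus developed for \autoref{TRT} (and its weaker variants \autoref{wTRT1}, \autoref{wTRT2}, \autoref{wTRT3}), so the proof here is largely a matter of citing that setup and reading off the degrees; I would structure the write-up to make the parallel with the proof of \autoref{TRT} explicit, emphasizing that uniqueness is the ``one stem lower'' analogue of existence — existence obstructions live in $\Ext^{-2,f,0}$ for $f\geq 3$, uniqueness obstructions in $\Ext^{-1,f,0}$ for $f\geq 2$.
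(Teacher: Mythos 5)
Your proposal is correct and follows essentially the same route as the paper: the paper's (one-line) proof simply observes that the class $\iota\in\Ext^{0,0,0}_{\cAR}$ representing the isomorphism survives the $\R$-motivic Adams spectral sequence, which is exactly your argument that the differentials out of $[\varphi]$ land in $\Ext^{-1,f,0}_{\cAR}$ for $f\geq 2$ and hence vanish, so the class is realized by a map that is a cohomology isomorphism and therefore an equivalence of $\RHF$-complete finite cellular spectra. Your write-up just makes explicit the degree bookkeeping and the convergence/cofiber step that the paper leaves implicit.
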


\begin{proof}The result follows from the fact that the nonzero element 
\[ \iota \in \Ext^{0, 0, 0}_{\cAR}(\rHR^{*,*}(\rX'), \rHR^{*,*}(\rX) ) \]
representing the isomorphism $\rHR^{*,*}(\rX') \cong \rHR^{*,*}(\rX) $ survives the Adams spectral sequence converging to $[\rX, \rX']$. 
\end{proof}

The uniqueness theorem applies to the $\cAR$-modules $\mathcal{B}_{{\sf h}}^\R(1)$ and $\mathcal{B}_{2}^\R(1)$ (see \autoref{uniqueB}). However,  it does not apply to $\cAR_{1}[\overline{\mr{v}}]$ for any $\overline{\mr{v}} \in \mathcal{V}$. Potentially, there can be multiple different finite spectra realizing  $\cAR_{ \overline{\mr{v}}}(1)$ up to a weak equivalence (see \autoref{nonuniqueA}), 
 making it difficult to  get a precise count of the number of $1$-periodic $v_1$-self-maps on $\Y_{({\sf h}, 1)}^\R$ and $\Y_{(2, 1)}^\R$ from \autoref{thm:exactcofiber}.

 Upon applying the Betti realization functor 
\[ 
\begin{tikzcd}
\upbeta: \Sp^{\R} \rar & \Sp^{\Ctwo}
\end{tikzcd} \]
we get various $\Ctwo$-equivariant maps $\upbeta(v): \Sigma^{2,1} \Y^\Ctwo_{(\epsilon, 1)} \longrightarrow \Y^\Ctwo_{(\delta, 1)}$ (where $\epsilon, \delta \in \{ 2, \sfh \}$) whose underlying maps are $v_1$-self-maps of $\Y$. We also get  the following corollary of \autoref{thm:128R} (see \autoref{nochange}). 

\begin{cor} \label{cor:128C} There exists $128$ different $\cA^\Ctwo$-modules whose underlying $\cA^{\Ctwo}(1)$-module structures are free on one generator, all of which can be realized as the $\RO(\Ctwo)$-graded cohomology of a $2$-local finite $\Ctwo$-spectrum. 
\end{cor}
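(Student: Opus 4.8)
The plan is to deduce \autoref{cor:128C} from \autoref{thm:128R} by transporting the $\R$-motivic realizations across the Betti realization functor $\upbeta\colon \Sp^\R \to \Sp^{\Ctwo}$, using the fact that Betti realization is compatible with Eilenberg--Mac~Lane spectra and cohomology. First I would recall that $\upbeta(\RHF) \simeq \eqHF$ and that for $\rX \in \SpfinR$ the natural map identifies $\rHC^{\star}(\upbeta(\rX))$ with $\rHR^{*,*}(\rX) \otimes_{\bMR} \bMC$ under the bigrading conventions fixed in \autoref{NotationList} (this is the content of the comparison of motivic and $\Ctwo$-equivariant cohomology in the relevant cellular, completed subcategories; I would cite the realization-compatibility statement used elsewhere in the paper, e.g. the result behind \autoref{nochange}). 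In particular $\upbeta$ carries the $\cAR$-action to the $\cAC$-action, so an $\cAR$-module structure on $\cAR(1)$ that is free of rank one base-changes to an $\cAC$-module structure on $\cAC(1)$ that is free of rank one over $\bMC$.

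Next I would argue that this base-change assignment $\overline{\mr{v}} \mapsto \bMC \otimes_{\bMR} \cAR_{\overline{\mr{v}}}(1)$ is injective on isomorphism classes, so that the $128$ distinct $\cAR$-modules of \autoref{thm:128R} produce $128$ distinct $\cAC$-modules. The key point here is that the structure constants in \autoref{list} are written in terms of $\tau$, $\rho$, and the classes $x_{i,j}, y_{i,j}$, and under $\upbeta$ one has $\tau \mapsto u_{\upsigma}$, $\rho \mapsto a_{\upsigma}$ (up to the normalization in \autoref{NotationList}); since $u_{\upsigma}, a_{\upsigma}$ are polynomial generators in the positive cone of $\bMC$, the seven coordinates $(\alpha_{03}, \beta_{03}, \beta_{14}, \beta_{06}, \beta_{25}, \beta_{26}, \gamma_{36})$ can still be read off from the $\cAC(1)$-module structure on the nose — the relations detecting each coordinate survive base change because no coordinate relation becomes degenerate in $\bMC$. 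Hence the $128$ modules remain pairwise non-isomorphic, giving $128$ distinct $\cAC$-module structures on $\cAC(1)$, each free of rank one.

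Finally, for realizability: given $\overline{\mr{v}} \in \mathcal V$, \autoref{thm:128R} supplies $\rX = \cAR_1[\overline{\mr{v}}] \in \SpfinR$ with $\rHR^{*,*}(\rX) \cong \cAR_{\overline{\mr{v}}}(1)$; then $\upbeta(\rX)$ is a finite $\Ctwo$-spectrum, and it is $2$-local since $\rX$ lives in the $\RHF$-complete (hence $2$-complete, in particular $2$-local after the relevant finiteness) category and $\upbeta$ preserves this. The cohomology computation above gives $\rHC^{\star}(\upbeta(\rX)) \cong \bMC \otimes_{\bMR}\cAR_{\overline{\mr{v}}}(1) \cong \cAC_{\overline{\mr{v}}}(1)$ as $\cAC$-modules, which is exactly an $\cAC$-module structure on $\cAC(1)$ free of rank one. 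Running over all $128$ vectors $\overline{\mr{v}}$ realizes all $128$ modules, proving the corollary.

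I expect the main obstacle to be the bookkeeping in the second paragraph: verifying that base change along $\bMR \to \bMC$ neither collapses two of the $128$ module structures together nor creates new isomorphisms among them, i.e. that the $\RO(\Ctwo)$-graded Steenrod module structure genuinely remembers all seven $\F$-parameters. This amounts to checking that the distinguishing invariants used to separate the $\cAR_{\overline{\mr{v}}}(1)$ in the proof of \autoref{list} are stated entirely in terms of the image classes $u_{\upsigma}, a_{\upsigma}$ and so remain valid invariants over $\bMC$; once that is in hand, together with the fact (already recorded in the discussion preceding \autoref{cor:128C}, cf.\ \autoref{nochange}) that Betti realization induces the expected map on Steenrod algebras and cohomology, the corollary is immediate.
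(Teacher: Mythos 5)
Your proposal is correct and follows essentially the same route as the paper: apply the Betti realization functor to the spectra $\cAR_1[\overline{\mr{v}}]$, use $\upbeta(\RHF)\simeq \eqHF$, $\rho\mapsto a_{\upsigma}$, $\tau\mapsto u_{\upsigma}$ and monoidality to see that the $\RO(\Ctwo)$-graded cohomology is $\bMC$-free with module structure given by the formulas of \autoref{list}, and then a degree-based check (the paper's \autoref{nochange}) to see that passing from $\bMR$ to $\bMC$ does not change the count of isomorphism classes. The only cosmetic difference is that the paper phrases the degree argument as ruling out \emph{extra} $\cA^{\Ctwo}$-module structures coming from the negative cone of $\bMC$, while you phrase it as the seven parameters remaining detectable after base change; both come down to the same degree bookkeeping.
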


\begin{notn} Let $\cA_1^{\Ctwo}[\overline{\mr{v}}]$ denote the Betti realization $\upbeta(\cA_1^{\R}[\overline{\mr{v}}]) $, where $\overline{\mr{v}} \in \mathcal{V}$.
\end{notn}

Let $\Res: \Sp^\Ctwo \to \Sp$ denote the restriction functor (restricting the $\Ctwo$-action to the trivial group) and  $ \Phi: \Sp^\Ctwo \to \Sp$ denote the geometric fixed-point functor. Note that the geometric fixed-point spectra
\[ 
\GFix{\Y^{\Ctwo}_{({\sfh}, 1)}} := \GFix{\mr{C}^{\Ctwo}(\sfh) \sma \mr{C}^{\Ctwo}(\eta_{1,1})}  \simeq \mr{M}_2(1) \vee \Sigma \mr{M}_2(1)
\]
and
\[ 
\GFix{\Y^{\Ctwo}_{(2, 1)} } := \GFix{\mr{C}^{\Ctwo}(2) \sma \mr{C}^{\Ctwo}(\eta_{1,1})}  \simeq \mr{M}_2(1) \sma  \mr{M}_2(1)
\]
are both of type $1$ (see \cite{BGL}). Further, for degree reasons  
\[ 
\begin{tikzcd}
\GFix{\upbeta(v)}:  \Sigma\GFix{\Y^{\Ctwo}_{(\epsilon, 1)} } \rar &  \GFix{\Y^{\Ctwo}_{(\updelta, 1)} }
\end{tikzcd}
\]
cannot be a $v_1$-self-map, and hence must be nilpotent, using \cite{Thick}. Therefore, {the fiber} $\GFix{\cA_1^\Ctwo[\overline{\mr{v}}]}$ is a type $1$ spectrum, i.e.  $\cA_1^\Ctwo[\overline{\mr{v}}]$ is of type $(2,1)$ in the sense of \cite{BGL}. Much more can be said about $\GFix{\cA_1^\Ctwo[\overline{\mr{v}}]}$ than just its type. In this paper, we give a complete description of the $\cA$-module structure of $\rH^*(\GFix{\cA_1^\Ctwo[\overline{\mr{v}}]})$ for all $\overline{\mr{v}} \in \mathcal{V}$ by developing a general method that compares the $\RO(\Ctwo)$-graded squaring operations of a $\Ctwo$ spectrum with the ordinary squaring operations of its underlying spectrum  as well as its geometric fixed-point spectrum (compare \cite{BW}*{$\mathsection$3}). 

Since $\ParRes{\eqHF} \simeq \rH\F$ we have a natural map 
\[ 
\begin{tikzcd}
\Res_*: \rHC^{n, m}(\mr{E}) \simeq [\mr{E}, \Sigma^{n,m} \rH\uF ] \rar &   \text{[} \ParRes{\mr{E}}, \Sigma^{n}  \ParRes{ \rH\uF} \text{]} \simeq \rH^{n}(\ParRes{\mr{E}}) 
\end{tikzcd} 
\] 
for any $\mr{E} \in \Sp^\Ctwo$. We  use the following theorem to identify the spectrum underlying $\cA_1^\Ctwo[\overline{\mr{v}}]$ (see \autoref{underA1}). 

\begin{thm} \label{thm:squareunder} 
For $\mr{E}\in \SpfinC$ and 
any class $u \in \rHC^{\star}\!(\mr{E})$, 
$\Res_*(\eSq^n(u)) = \Sq^n(\Res_*(u))$.
\end{thm}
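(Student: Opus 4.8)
The plan is to reduce the statement to the universal case and then exploit the fact that the restriction functor $\Res$ comes from a symmetric monoidal left adjoint, so that it commutes with all the structure used to build power operations. Recall that the $\RO(\Ctwo)$-graded operations $\eSq^n$ are constructed (in the manner of Voevodsky, adapted to the $\Ctwo$-equivariant setting) from the maps $\eqHF \sma \eqHF \to \eqHF$ together with the extended-power/norm diagonal, i.e. from a $\Ctwo$-equivariant $\mathrm{E}_\infty$ (or at least $\mathrm{H}_\infty$) structure on $\eqHF$ and the resulting total power operation $\eqHF \to \eqHF^{\hocolim}_{\Sigma_2}(-)$. The operation $\eSq^n$ on a class $u\colon \mr{E} \to \Sigma^\star \eqHF$ is obtained by precomposing the total power operation with $u^{\sma 2}$, passing through the Tate/homotopy-orbit construction for $\Sigma_2$, and extracting the component in the appropriate degree via the known structure of $\rHC^\star(B\Sigma_2)$ (the $\RO(\Ctwo)$-graded cohomology of $B_{\Ctwo}\Sigma_2$). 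The ordinary $\Sq^n$ on $\rH^*(-;\F)$ is built identically from $\rH\F$.

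First I would record that $\Res\colon \Sp^\Ctwo \to \Sp$ is symmetric monoidal and preserves homotopy colimits (it is a left adjoint, being restriction along $\Ctwo \to e$), and that $\ParRes{\eqHF} \simeq \rH\F$ as $\mathrm{E}_\infty$-rings (this is standard; the underlying spectrum of the constant Mackey functor Eilenberg--Mac Lane spectrum is the ordinary one, compatibly with multiplication). Consequently $\Res$ carries the $\Ctwo$-equivariant $\mathrm{H}_\infty$-structure maps for $\eqHF$ to those for $\rH\F$, and carries $B_{\Ctwo}\Sigma_2$-indexed constructions to the corresponding $B\Sigma_2$-indexed constructions. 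Second, I would set up the naturality square: for $u \in \rHC^\star(\mr E)$ represented by $\mr E \to \Sigma^\star \eqHF$, applying $\Res$ gives $\ParRes{\mr E} \to \Sigma^{|u|_{\mathrm{top}}} \rH\F$, and the total power operation for $\eqHF$ maps under $\Res$ to the total power operation for $\rH\F$; this is the one genuinely load-bearing diagram. Third, I would compare the coefficient rings: the map $\rHC^\star(B_{\Ctwo}\Sigma_2) \to \rH^*(B\Sigma_2)$ induced by $\Res$ sends the universal classes (the $\RO(\Ctwo)$-graded analogues of the generators of $\rH^*(B\Sigma_2)\cong \F[w]$) to the classical generators, so that the expansion of the total power operation in terms of the $\eSq^n$ maps to the expansion in terms of the $\Sq^n$ term by term. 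Matching components of a fixed topological degree on both sides then yields $\Res_*(\eSq^n(u)) = \Sq^n(\Res_*(u))$.

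I expect the main obstacle to be purely bookkeeping with the grading: the $\RO(\Ctwo)$-graded cohomology of $B_{\Ctwo}\Sigma_2$ is considerably more complicated than $\F[w]$ (it contains the ``negative cone'' coming from $\bMC$), so one must check that the class by which one must divide in the definition of $\eSq^n$ is nonzero after restriction and that no extra components of the total power operation — components that vanish topologically but are nonzero $\RO(\Ctwo)$-equivariantly — contribute after applying $\Res$. In other words, the content is that $\Res$ is compatible with the \emph{choice of normalization} implicit in the definition of $\eSq^n$. Once that compatibility is pinned down, the rest is formal naturality. It may be cleanest to phrase the whole argument by saying: the pair $(\Res, \ParRes{\eqHF}\simeq \rH\F)$ is a map of ``$\mathrm{H}_\infty$-contexts'' in the sense needed to define Steenrod operations, hence induces a map of the resulting operation algebras, and $\eSq^n \mapsto \Sq^n$ because this is what happens on the defining classes in $B\Sigma_2$-cohomology. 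I would also remark that finiteness of $\mr E$ (the hypothesis $\mr E \in \SpfinC$) is used only to ensure the relevant mapping spectra and completions behave well, so that the total power operation is genuinely detected on components; it is not essential to the formal mechanism.
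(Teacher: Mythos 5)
Your proposal is correct and follows essentially the same route as the paper: show that $\Res$ (monoidal, with $\ParRes{\eqHF}\simeq \rH\F$) carries the equivariant extended-power and Thom-class data, hence the total power operation $\mathcal{P}_{n\uprho}$, to the classical $\mathcal{P}_{2n}$, then compare the expansions over $\mr{B}_{\Ctwo}\Sigma_2$ and $\mr{B}\Sigma_2$ via $\Res_*(\yy)=\ttt$, $\Res_*(\xx)=\ttt^2$, and finally extend to all of $\rHC^{\star}$ by stability and desuspension of finite suspension spectra. The normalization issue you flag is handled in the paper without any division: the components $\eSq^i$ are read off from the free $\bMC$-module structure of $\rHC^{\star}((\mr{B}_{\Ctwo}\Sigma_2)_+)$ via the Kunneth isomorphism, and the kernel of $\Res_*$ on $\bMC$ (the $a_{\upsigma}$-divisible part and the negative cone) causes no extra terms.
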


Using the fact that the projection map 
\[ \begin{tikzcd}
\uppi_{\F}^{(0)}: \GFix{\rH\uF} \rar[two heads] & \rH\F,
\end{tikzcd} \]
 is an $\mathbb{E}_{\infty}$-ring map, one defines (also see \cite{BW}*{(2.7)})  the map 
\begin{equation}
\begin{tikzcd}
\modPhi\colon \rHC^{n,m}(\mr{E}) \rar[two heads]  &  \rH^{n-m}(\GFix{\mr{E}}) 
\end{tikzcd}
\end{equation}
which compares the $\RO(\Ctwo)$-graded cohomology of a $\Ctwo$-spectrum $\mr{E}$ with the ordinary cohomology of its geometric fixed-point spectrum. We show: 

 \begin{thm} \label{thm:squarefix} For $\mr{E}\in \SpfinC$ and any class $u \in \rHC^{\star}\!(\mr{E})$, 
\[\modPhi(\eSq^{2n}(u)) = \Sq^n(\modPhi(u)).\] 
\end{thm}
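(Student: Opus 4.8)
The plan is to reduce Theorem \ref{thm:squarefix} to the analogous statement for a universal example, namely a product of Eilenberg--Mac~Lane spectra, by the usual trick: for $\mr{E} \in \SpfinC$ and $u \in \rHC^{n,m}(\mr{E})$, the class $u$ is represented by a map $u \colon \mr{E} \to \Sigma^{n,m}\eqHF$, and the operation $\eSq^{2n}$ is represented by a map $\Sigma^{n,m}\eqHF \to \Sigma^{\star}\eqHF$ in the appropriate $\RO(\Ctwo)$-degree. Because both $\modPhi$ and the naturality of $\eSq$ commute with pullback along $u$, it suffices to check the identity universally on the fundamental class $\iota_{n,m} \in \rHC^{n,m}(\Sigma^{n,m}\eqHF)$. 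So the first step is to set up this naturality/universality reduction carefully, spelling out that $\modPhi$ is induced by postcomposition with the $\mathbb{E}_\infty$-ring map $\uppi_\F^{(0)}\colon \GFix{\rH\uF} \onto \rH\F$ and is therefore natural in $\mr{E}$ and multiplicative.

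Next I would analyze what $\modPhi$ does to the $\RO(\Ctwo)$-graded Steenrod algebra itself. Applying $\GFix{-}$ to $\eqHF$ and then projecting via $\uppi_\F^{(0)}$ gives a ring map from $\rHC^\star(\eqHF) = \cAC$ (the $\RO(\Ctwo)$-graded Steenrod algebra, suitably completed) to $\rH^*(\rH\F) = \cA$, and the key computational input is the behavior of this map on generators: it should send $\eSq^{2n}$ to $\Sq^n$ and $\eSq^{2n+1}$ to $0$ (in analogy with the $\R$-motivic statement that $\Phi$-type operations halve degrees). This is essentially the content of \cite{BW}*{\S 2--3}, or can be extracted from the structure of $\cAC$ together with the known computation of $\GFix{\eqHF}$ and the ring map $\uppi_\F^{(0)}$; I would either cite this or give the short argument using the motivic-to-$\Ctwo$ comparison (Betti realization) combined with the known $\R$-motivic fact that the map $\cAR \to \GFix{}$-target behaves this way on $\Sq^i$. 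Concretely, one checks on the dual Steenrod algebra: $\GFix{\eqHF}_\star$ is known, the map $\uppi_\F^{(0)}$ kills the relevant ``$a_\sigma$, $u_\sigma$-torsion'' summand, and what survives is precisely the classical $\rH\F_*\rH\F$ with $\xi_i \mapsto \xi_i^?$ doubling degrees appropriately — dualizing gives $\eSq^{2n} \mapsto \Sq^n$.

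With these two ingredients in hand the proof is short: by the universality reduction, $\modPhi(\eSq^{2n}(u)) = \modPhi(\eSq^{2n})(\modPhi(u))$-type manipulation — more precisely, writing $u$ as a map to $\Sigma^{n,m}\eqHF$ and $\eSq^{2n}(u)$ as the composite with the stable operation $\theta_{2n}\colon \Sigma^{n,m}\eqHF \to \Sigma^{2n+n, ?}\eqHF$, one applies $\GFix{-}$ and then $\uppi_\F^{(0)}$, using that $\modPhi$ is natural, to get that $\modPhi(\eSq^{2n}(u))$ equals $\modPhi(u)$ followed by the classical operation that is the image of $\theta_{2n}$ under $\cAC \to \cA$; by the generator computation this image is $\Sq^n$, and $\modPhi(u) = \modPhi(u)$ lands in $\rH^{n-m}(\GFix{\mr{E}})$, so we get $\Sq^n(\modPhi(u))$ as claimed. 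I would also need to keep track of the bookkeeping that $\eSq^{2n}$ raises $(n,m)$-degree by $(2n,n)$ so that under the identification $(n,m) \rightsquigarrow (n-m)\upepsilon + m\upsigma$ it lands where $\Sq^n$ should after applying $\modPhi$ (which sends degree $(n,m)$ to $n-m$); this degree check is routine but must be done.

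The main obstacle I expect is the computation of the ring map $\cAC \to \cA$ induced by $\uppi_\F^{(0)} \circ \GFix{-}$ on all Steenrod operations, i.e. establishing cleanly that $\eSq^{2n}\mapsto \Sq^n$ and $\eSq^{2n+1}\mapsto 0$. There are two subtleties: (1) the $\RO(\Ctwo)$-graded Steenrod algebra is large and one must be careful about which operations $\eSq^k$ one means (the ones in integer topological degree, coming from the ``positive cone''), and whether the statement as phrased in the theorem implicitly restricts $u$ to integer-graded classes or handles the full $\RO(\Ctwo)$-grading — the cleanest route is probably to only assert it for the $\eSq^k$ in integer degrees, which is all the theorem needs; (2) one must verify that $\uppi_\F^{(0)}$ is not just a ring map but compatible with the operations in the sense that it is a map of $\mathbb{E}_\infty$-$\eqHF$-algebras after geometric fixed points, so that the induced map on cohomology operations is a map of Steenrod algebras. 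Once this structural point is granted, everything else is formal naturality, so I would invest most of the write-up in this step, likely leaning on \cite{BW} and on Theorem \ref{thm:squareunder} as a template for the analogous underlying-spectrum statement.
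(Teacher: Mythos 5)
Your overall strategy --- reduce by naturality to the universal case, then identify the map on Steenrod operations induced by $\uppi_{\F}^{(0)}\circ\GFix{(-)}$ --- is a legitimate framework, and your degree bookkeeping is right; it is also genuinely different from the paper's argument. But as written there is a real gap: the ``key computational input,'' namely that this induced map sends $\eSq^{2n}\mapsto\Sq^{n}$ and $\eSq^{2n+1}\mapsto 0$, \emph{is} the theorem in the universal case, so the naturality reduction by itself makes no progress, and neither of your proposed ways of supplying that input is actually carried out. Leaning on \cite{BW}*{$\mathsection$3} (which the paper itself offers only as a ``compare'') would turn the theorem into a citation rather than a proof; and the dual-Steenrod-algebra sketch leaves the substantive points open: you would need to compute $\GFix{(-)}$ on the Hu--Kriz generators of $\pi_{\star}(\eqHF\sma\eqHF)$, deal with the fact that passing from the $\RO(\Ctwo)$-graded dual to operations involves a completed dual over $\bMC$ (not a field), and the shortcut via ``Betti realization of the known $\R$-motivic fact'' does not exist in the form described --- there is no motivic geometric fixed-points functor to compare against, and the corresponding $\rho$-inverted motivic statement would itself need proof. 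A smaller point: $\eqHF\notin\SpfinC$, so to evaluate on a fundamental class you must first argue that the operations of the paper (constructed on spaces and extended to finite spectra by stability) are represented by maps of spectra out of $\eqHF$; this is standard but has to be said.

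For contrast, the paper proves the universal identification geometrically, at the level of the defining construction of the operations: \autoref{lem:eulercompare} matches the equivariant Thom class $\underline{\uu}_n$ with $\uu_n$ under $\modPhi$ and $\uplambda^{*}$; the ensuing corollary gives $\mathcal{P}_n(\modPhi(u))=\uplambda^{*}_{\rX}\bigl(\modPhi(\mathcal{P}_{n\uprho}(u))\bigr)$, using that $\uppi_{\F}^{(0)}$ is an $\bE_\infty$-ring map; and then restricting along the diagonal, with $\xx\mapsto\ttt$, $\yy\mapsto\iota$ and the fact that $\uplambda^{*}_{\mr{S}^0}$ kills $\iota$ (\autoref{rmk:Phiproj}), simultaneously yields $\modPhi(\eSq^{2i}(u))=\Sq^{i}(\modPhi(u))$ and explains why the odd operations vanish; stability and \eqref{GFixsuspension} then extend the statement from spaces to all of $\SpfinC$. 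If you wish to keep your route, the honest completion is to prove this extended-power/Thom-class comparison yourself, at which point you will essentially have reproduced the paper's proof.
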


\begin{figure}[h]
\caption{Some underlying and fixed $\cA$-modules of $\cAC_1$}
\label{UnderFixAmodule}
\begin{center}
\begin{tabular}{|c|c|c|c||}
\toprule
$\overline{\mr{v}} \in \mathcal{V}$  &   $\rH^*(\ParRes{ \cA_1^\Ctwo[\overline{\mr{v}}]})$    &   $\rH^*(\GFix{ \cA_1^\Ctwo[\overline{\mr{v}}]})$  & Cofiber of \\
\midrule
$(0,0,1,0,0,0,0)$ &
\raisebox{-3em}{$
\begin{tikzpicture}\begin{scope}[thick, every node/.style={sloped,allow upside down}, scale=0.32]
\draw (0,0)  node[inner sep=0] (v00) {} -- (0,1) node[inner sep=0] (v01) {};
\draw (0,2)  node[inner sep=0] (v02) {} -- (0,3) node[inner sep=0] (v03) {};
\draw (1,3)  node[inner sep=0] (v13) {} -- (1,4) node[inner sep=0] (v14) {};
\draw (1,5)  node[inner sep=0] (v15) {} -- (1,6) node[inner sep=0] (v16) {};
 \draw [color=blue] (v00) to [out=150,in=-150] (v02);
 \draw [color=blue] (v01) to [out=15,in=-90] (v13);
 \draw [color=blue] (v03) to [out=90,in=-165] (v15);
 \draw [color=blue] (v14) to [out=30,in=-30] (v16);
 \draw [color=blue] (v02) to [out=15,in=-150] (v14);
 \draw [color = red] (v01) to (2,1) to (2,5) to (v15);
\filldraw (v00) circle (2.5pt);
\filldraw (v01) circle (2.5pt);
\filldraw (v02) circle (2.5pt);
\filldraw (v03) circle (2.5pt);
\filldraw (v13) circle (2.5pt);
\filldraw (v14) circle (2.5pt);
\filldraw (v15) circle (2.5pt);
\filldraw (v16) circle (2.5pt);
\end{scope}\end{tikzpicture}
$}
&
\raisebox{-3em}{$
\begin{tikzpicture}\begin{scope}[ thick, every node/.style={sloped,allow upside down}, scale=0.32]
\draw (1,0)  node[inner sep=0] (v10) {} -- (1,1) node[inner sep=0] (v01) {};
\draw (2,1)  node[inner sep=0] (v21) {} -- (2,2) node[inner sep=0] (v12) {};
\draw (4,2)  node[inner sep=0] (v42) {} -- (5,3) node[inner sep=0] (v53) {};
\draw (3,3)  node[inner sep=0] (v33) {} -- (4,4) node[inner sep=0] (v44) {};
 \draw [color=blue] (v21) to [out=15,in=-150] (v33);
\filldraw (v10) circle (2.5pt);
\filldraw (v21) circle (2.5pt);
\filldraw (v01) circle (2.5pt);
\filldraw (v12) circle (2.5pt);
\filldraw (v42) circle (2.5pt);
\filldraw (v53) circle (2.5pt);
\filldraw (v33) circle (2.5pt);
\filldraw (v44) circle (2.5pt);
\draw (v42) to (v33);
\draw (v53) to (v44);
\end{scope}\end{tikzpicture}
$} 
& 
$v: \Sigma^{2,1}\Y_{(\sfh,1)} \to \Y_{(\sfh,1)}$
 \\
 \midrule
 $(1,1,0,0,0,0,1)$ &
\raisebox{-3em}{$
\begin{tikzpicture}\begin{scope}[thick, every node/.style={sloped,allow upside down}, scale=0.32]
\draw (0,0)  node[inner sep=0] (v00) {} -- (0,1) node[inner sep=0] (v01) {};
\draw (0,2)  node[inner sep=0] (v02) {} -- (0,3) node[inner sep=0] (v03) {};
\draw (1,3)  node[inner sep=0] (v13) {} -- (1,4) node[inner sep=0] (v14) {};
\draw (1,5)  node[inner sep=0] (v15) {} -- (1,6) node[inner sep=0] (v16) {};
 \draw [color=blue] (v00) to [out=150,in=-150] (v02);
 \draw [color=blue] (v01) to [out=15,in=-90] (v13);
 \draw [color=blue] (v03) to [out=90,in=-165] (v15);
 \draw [color=blue] (v14) to [out=30,in=-30] (v16);
 \draw [color=blue] (v02) to [out=15,in=-150] (v14);
 \draw [color = red] (v01) to (2,1) to (2,5) to (v15);
\filldraw (v00) circle (2.5pt);
\filldraw (v01) circle (2.5pt);
\filldraw (v02) circle (2.5pt);
\filldraw (v03) circle (2.5pt);
\filldraw (v13) circle (2.5pt);
\filldraw (v14) circle (2.5pt);
\filldraw (v15) circle (2.5pt);
\filldraw (v16) circle (2.5pt);
\end{scope}\end{tikzpicture}
$}
&
\raisebox{-3em}{$
\begin{tikzpicture}\begin{scope}[ thick, every node/.style={sloped,allow upside down}, scale=0.32]
\draw (3,0)  node[inner sep=0] (v30) {} -- (3,1) node[inner sep=0] (v31) {};
\draw (1,1)  node[inner sep=0] (v11) {} -- (2,2) node[inner sep=0] (v22) {};
\draw (0,2)  node[inner sep=0] (v02) {} -- (1,3) node[inner sep=0] (v13) {};
\draw (3,3)  node[inner sep=0] (v33) {} -- (3,4) node[inner sep=0] (v34) {};
 \draw [color=blue] (v31) to [bend right=50] (v33);
 \draw [color=blue] (v30) to [bend left] (v22);
 \draw [color=blue] (v22) to [bend left] (v34);
\filldraw (v30) circle (2.5pt);
\filldraw (v31) circle (2.5pt);
\filldraw (v11) circle (2.5pt);
\filldraw (v02) circle (2.5pt);
\filldraw (v22) circle (2.5pt);
\filldraw (v13) circle (2.5pt);
\filldraw (v33) circle (2.5pt);
\filldraw (v34) circle (2.5pt);
\draw (v11) to (v02);
\draw (v22) to (v13);
\draw (0,0) node[left]{$ $} (0,1) node[left]{$ $} (1,1) node[right]{$ $} (1,2) node[left]{$ $};
\draw (3,4) node[right]{$ $} (3,3) node[right]{$ $} (2,3) node[left]{$ $} (2,2) node[right]{$ $};
\end{scope}\end{tikzpicture}
$}
& 
$v: \Sigma^{2,1}\Y_{(2,1)} \to \Y_{(\sfh,1)}$
 \\
 \midrule 
 $(0,1,0,1,0,1,0)$ &
\raisebox{-3em}{$
\begin{tikzpicture}\begin{scope}[thick, every node/.style={sloped,allow upside down}, scale=0.32]
\draw (0,0)  node[inner sep=0] (v00) {} -- (0,1) node[inner sep=0] (v01) {};
\draw (0,2)  node[inner sep=0] (v02) {} -- (0,3) node[inner sep=0] (v03) {};
\draw (1,3)  node[inner sep=0] (v13) {} -- (1,4) node[inner sep=0] (v14) {};
\draw (1,5)  node[inner sep=0] (v15) {} -- (1,6) node[inner sep=0] (v16) {};
 \draw [color=blue] (v00) to [out=150,in=-150] (v02);
 \draw [color=blue] (v01) to [out=15,in=-90] (v13);
 \draw [color=blue] (v03) to [out=90,in=-165] (v15);
 \draw [color=blue] (v14) to [out=30,in=-30] (v16);
 \draw [color=blue] (v02) to [out=15,in=-150] (v14);
 \draw [color=red] (v02) to (-1,2) to (-1,6) to (v16);
 \draw [color = red] (v01) to (2,1) to (2,5) to (v15);
\filldraw (v00) circle (2.5pt);
\filldraw (v01) circle (2.5pt);
\filldraw (v02) circle (2.5pt);
\filldraw (v03) circle (2.5pt);
\filldraw (v13) circle (2.5pt);
\filldraw (v14) circle (2.5pt);
\filldraw (v15) circle (2.5pt);
\filldraw (v16) circle (2.5pt);
\end{scope}\end{tikzpicture}
$}
&
\raisebox{-3em}{$
\begin{tikzpicture}\begin{scope}[thick, every node/.style={sloped,allow upside down}, scale=0.32]
\draw (-.5,0)  node[inner sep=0] (v00) {} -- (-.5,1) node[inner sep=0] (v01) {};
\draw (.5,1)  node[inner sep=0] (v11) {} -- (.5,2) node[inner sep=0] (v02) {};
\draw (1.5,2)  node[inner sep=0] (v12) {} -- (1.5,3) node[inner sep=0] (v13) {};
\draw (2.5,3)  node[inner sep=0] (v23) {} -- (2.5,4) node[inner sep=0] (v24) {};
 \draw [color=blue] (v00) to [out=15,in=-150] (v02);
 \draw [color=blue] (v11) to [out=15,in=-150] (v13);
 \draw [color=blue] (v12) to [out=30,in=-500] (v24);
 \draw [color = red] (v00) to (3,0) to (3,4) to (v24);
\filldraw (v00) circle (2.5pt);
\filldraw (v01) circle (2.5pt);
\filldraw (v11) circle (2.5pt);
\filldraw (v02) circle (2.5pt);
\filldraw (v12) circle (2.5pt);
\filldraw (v23) circle (2.5pt);
\filldraw (v13) circle (2.5pt);
\filldraw (v24) circle (2.5pt);
\end{scope}\end{tikzpicture}
$}
& 
$v: \Sigma^{2,1}\Y_{(2,1)} \to \Y_{(2,1)}$
 \\
 \midrule
 $(1,0,0,0,0,1,1)$ &
\raisebox{-3em}{$
\begin{tikzpicture}\begin{scope}[thick, every node/.style={sloped,allow upside down}, scale=0.32]
\draw (0,0)  node[inner sep=0] (v00) {} -- (0,1) node[inner sep=0] (v01) {};
\draw (0,2)  node[inner sep=0] (v02) {} -- (0,3) node[inner sep=0] (v03) {};
\draw (1,3)  node[inner sep=0] (v13) {} -- (1,4) node[inner sep=0] (v14) {};
\draw (1,5)  node[inner sep=0] (v15) {} -- (1,6) node[inner sep=0] (v16) {};
 \draw [color=blue] (v00) to [out=150,in=-150] (v02);
 \draw [color=blue] (v01) to [out=15,in=-90] (v13);
 \draw [color=blue] (v03) to [out=90,in=-165] (v15);
 \draw [color=blue] (v14) to [out=30,in=-30] (v16);
 \draw [color=blue] (v02) to [out=15,in=-150] (v14);
 \draw [color=red] (v02) to (-0.75,2) to (-0.75,6) to (v16);
 \draw [color = red] (v01) to (2,1) to (2,5) to (v15);
\draw [color = red] (v00) to (-1.5,0) to (-1.5,4) to (v14);
\filldraw (v00) circle (2.5pt);
\filldraw (v01) circle (2.5pt);
\filldraw (v02) circle (2.5pt);
\filldraw (v03) circle (2.5pt);
\filldraw (v13) circle (2.5pt);
\filldraw (v14) circle (2.5pt);
\filldraw (v15) circle (2.5pt);
\filldraw (v16) circle (2.5pt);
\end{scope}\end{tikzpicture}
$}
&
\raisebox{-3em}{$
\begin{tikzpicture}\begin{scope}[ thick, every node/.style={sloped,allow upside down}, scale=0.32]
\draw (3,0)  node[inner sep=0] (v30) {} -- (3,1) node[inner sep=0] (v31) {};
\draw (1,1)  node[inner sep=0] (v11) {} -- (2,2) node[inner sep=0] (v22) {};
\draw (0,2)  node[inner sep=0] (v02) {} -- (1,3) node[inner sep=0] (v13) {};
\draw (3,3)  node[inner sep=0] (v33) {} -- (3,4) node[inner sep=0] (v34) {};
 \draw [color=blue] (v31) to [bend right=50] (v33);
 \draw [color=blue] (v30) to [bend left] (v22);
 \draw [color=blue] (v22) to [bend left] (v34);
 \draw [color=blue] (v11) to [out=75,in=-105] (v13);
\filldraw (v30) circle (2.5pt);
\filldraw (v31) circle (2.5pt);
\filldraw (v11) circle (2.5pt);
\filldraw (v02) circle (2.5pt);
\filldraw (v22) circle (2.5pt);
\filldraw (v13) circle (2.5pt);
\filldraw (v33) circle (2.5pt);
\filldraw (v34) circle (2.5pt);
\draw (v11) to (v02);
\draw (v22) to (v13);
\draw (0,0) node[left]{$ $} (0,1) node[left]{$ $} (1,1) node[right]{$ $} (1,2) node[left]{$ $};
\draw (3,4) node[right]{$ $} (3,3) node[right]{$ $} (2,3) node[left]{$ $} (2,2) node[right]{$ $};
\end{scope}\end{tikzpicture}
$}
& 
$v: \Sigma^{2,1}\Y_{(\sfh,1)} \to \Y_{(2,1)}$
 \\
 \midrule 
 $(0,0,0,1,0,1,0)$ &
\raisebox{-3em}{$
\begin{tikzpicture}\begin{scope}[thick, every node/.style={sloped,allow upside down}, scale=0.32]
\draw (0,0)  node[inner sep=0] (v00) {} -- (0,1) node[inner sep=0] (v01) {};
\draw (0,2)  node[inner sep=0] (v02) {} -- (0,3) node[inner sep=0] (v03) {};
\draw (1,3)  node[inner sep=0] (v13) {} -- (1,4) node[inner sep=0] (v14) {};
\draw (1,5)  node[inner sep=0] (v15) {} -- (1,6) node[inner sep=0] (v16) {};
 \draw [color=blue] (v00) to [out=150,in=-150] (v02);
 \draw [color=blue] (v01) to [out=15,in=-90] (v13);
 \draw [color=blue] (v03) to [out=90,in=-165] (v15);
 \draw [color=blue] (v14) to [out=30,in=-30] (v16);
 \draw [color=blue] (v02) to [out=15,in=-150] (v14);
 \draw [color=red] (v02) to (-0.75,2) to (-0.75,6) to (v16);
 \draw [color = red] (v01) to (2,1) to (2,5) to (v15);
\draw [color = red] (v00) to (-1.5,0) to (-1.5,4) to (v14);
\filldraw (v00) circle (2.5pt);
\filldraw (v01) circle (2.5pt);
\filldraw (v02) circle (2.5pt);
\filldraw (v03) circle (2.5pt);
\filldraw (v13) circle (2.5pt);
\filldraw (v14) circle (2.5pt);
\filldraw (v15) circle (2.5pt);
\filldraw (v16) circle (2.5pt);
\end{scope}\end{tikzpicture}
$}
&
\raisebox{-3em}{$
\begin{tikzpicture}\begin{scope}[ thick, every node/.style={sloped,allow upside down}, scale=0.32]
\draw (0,0)  node[inner sep=0] (v00) {} -- (0,1) node[inner sep=0] (v01) {};
\draw (1,1)  node[inner sep=0] (v11) {} -- (1,2) node[inner sep=0] (v12) {};
\draw (2,2)  node[inner sep=0] (v22) {} -- (2,3) node[inner sep=0] (v23) {};
\draw (3,3)  node[inner sep=0] (v33) {} -- (3,4) node[inner sep=0] (v34) {};
 \draw [color=blue] (v11) to [out=15,in=-150] (v23);
  \draw [color=blue] (v22) to [out=15,in=210] (v34);
  \draw [red] (v00) to (3.5,0) to (3.5,4) to (v34);
\filldraw (v00) circle (2.5pt);
\filldraw (v11) circle (2.5pt);
\filldraw (v01) circle (2.5pt);
\filldraw (v12) circle (2.5pt);
\filldraw (v22) circle (2.5pt);
\filldraw (v23) circle (2.5pt);
\filldraw (v33) circle (2.5pt);
\filldraw (v34) circle (2.5pt);
\end{scope}\end{tikzpicture}
$} 
& 
$v: \Sigma^{2,1}\Y_{(2,1)} \to \Y_{(\sfh,1)}$
 \\
 \midrule
 $(1,0,0,0,0,0,0)$ &
\raisebox{-3em}{$
\begin{tikzpicture}\begin{scope}[thick, every node/.style={sloped,allow upside down}, scale=0.32]
\draw (0,0)  node[inner sep=0] (v00) {} -- (0,1) node[inner sep=0] (v01) {};
\draw (0,2)  node[inner sep=0] (v02) {} -- (0,3) node[inner sep=0] (v03) {};
\draw (1,3)  node[inner sep=0] (v13) {} -- (1,4) node[inner sep=0] (v14) {};
\draw (1,5)  node[inner sep=0] (v15) {} -- (1,6) node[inner sep=0] (v16) {};
 \draw [color=blue] (v00) to [out=150,in=-150] (v02);
 \draw [color=blue] (v01) to [out=15,in=-90] (v13);
 \draw [color=blue] (v03) to [out=90,in=-165] (v15);
 \draw [color=blue] (v14) to [out=30,in=-30] (v16);
 \draw [color=blue] (v02) to [out=15,in=-150] (v14);
\draw [color = red] (v00) to (-1.5,0) to (-1.5,4) to (v14);
 \draw [color = red] (v01) to (2,1) to (2,5) to (v15);
\filldraw (v00) circle (2.5pt);
\filldraw (v01) circle (2.5pt);
\filldraw (v02) circle (2.5pt);
\filldraw (v03) circle (2.5pt);
\filldraw (v13) circle (2.5pt);
\filldraw (v14) circle (2.5pt);
\filldraw (v15) circle (2.5pt);
\filldraw (v16) circle (2.5pt);
\end{scope}\end{tikzpicture}
$}
&
\raisebox{-3em}{$
\begin{tikzpicture}\begin{scope}[ thick, every node/.style={sloped,allow upside down}, scale=0.32]
\draw (0,0)  node[inner sep=0] (v00) {} -- (0,1) node[inner sep=0] (031) {};
\draw (2,1)  node[inner sep=0] (v21) {} -- (2,2) node[inner sep=0] (v22) {};
\draw (1,2)  node[inner sep=0] (v12) {} -- (1,3) node[inner sep=0] (v13) {};
\draw (3,3)  node[inner sep=0] (v33) {} -- (3,4) node[inner sep=0] (v34) {};
 \draw [color=blue] (v00) to [out=13, in=210] (v12);
 \draw [color=blue] (v21) to [out=150,in=-30] (v13);
\filldraw (v00) circle (2.5pt);
\filldraw (v01) circle (2.5pt);
\filldraw (v21) circle (2.5pt);
\filldraw (v22) circle (2.5pt);
\filldraw (v12) circle (2.5pt);
\filldraw (v13) circle (2.5pt);
\filldraw (v33) circle (2.5pt);
\filldraw (v34) circle (2.5pt);
\draw (0,0) node[left]{$ $} (0,1) node[left]{$ $} (1,1) node[right]{$ $} (1,2) node[left]{$ $};
\draw (3,4) node[right]{$ $} (3,3) node[right]{$ $} (2,3) node[left]{$ $} (2,2) node[right]{$ $};
\end{scope}\end{tikzpicture}
$}
& 
$v: \Sigma^{2,1}\Y_{(\sfh,1)} \to \Y_{(2,1)}$
 \\
 \midrule 
 $(1,0,0,0,0,0,1)$ &
\raisebox{-3em}{$
\begin{tikzpicture}\begin{scope}[thick, every node/.style={sloped,allow upside down}, scale=0.32]
\draw (0,0)  node[inner sep=0] (v00) {} -- (0,1) node[inner sep=0] (v01) {};
\draw (0,2)  node[inner sep=0] (v02) {} -- (0,3) node[inner sep=0] (v03) {};
\draw (1,3)  node[inner sep=0] (v13) {} -- (1,4) node[inner sep=0] (v14) {};
\draw (1,5)  node[inner sep=0] (v15) {} -- (1,6) node[inner sep=0] (v16) {};
 \draw [color=blue] (v00) to [out=150,in=-150] (v02);
 \draw [color=blue] (v01) to [out=15,in=-90] (v13);
 \draw [color=blue] (v03) to [out=90,in=-165] (v15);
 \draw [color=blue] (v14) to [out=30,in=-30] (v16);
 \draw [color=blue] (v02) to [out=15,in=-150] (v14);
 \draw [color = red] (v01) to (2,1) to (2,5) to (v15);
  \draw [color = red] (v00) to (-1,0) to (-1,4) to (v14);
\filldraw (v00) circle (2.5pt);
\filldraw (v01) circle (2.5pt);
\filldraw (v02) circle (2.5pt);
\filldraw (v03) circle (2.5pt);
\filldraw (v13) circle (2.5pt);
\filldraw (v14) circle (2.5pt);
\filldraw (v15) circle (2.5pt);
\filldraw (v16) circle (2.5pt);
\end{scope}\end{tikzpicture}
$}
&
\raisebox{-3em}{$
\begin{tikzpicture}\begin{scope}[ thick, every node/.style={sloped,allow upside down}, scale=0.3]
\draw (0,0)  node[inner sep=0] (v00) {} -- (0,1) node[inner sep=0] (v11) {};
\draw (2,1)  node[inner sep=0] (v21) {} -- (2,2) node[inner sep=0] (v22) {};
\draw (1,2)  node[inner sep=0] (v12) {} -- (1,3) node[inner sep=0] (v13) {};
\draw (3,3)  node[inner sep=0] (v33) {} -- (3,4) node[inner sep=0] (v34) {};
 \draw [color=blue] (v00) to [out=15,in=-150] (v12);
 \draw [color=blue] (v21) to [out=-150,in=15] (v13);
\draw [color=blue] (v22) to [out=15,in=-165] (v34);
\filldraw (v00) circle (2.5pt);
\filldraw (v11) circle (2.5pt);
\filldraw (v21) circle (2.5pt);
\filldraw (v22) circle (2.5pt);
\filldraw (v12) circle (2.5pt);
\filldraw (v13) circle (2.5pt);
\filldraw (v33) circle (2.5pt);
\filldraw (v34) circle (2.5pt);
\end{scope}\end{tikzpicture}
$} 
& 
$v: \Sigma^{2,1}\Y_{(2,1)} \to \Y_{(2,1)}$
 \\
 \midrule
 $(1,1,1,1,1,0,1)$ &
\raisebox{-3em}{$
\begin{tikzpicture}\begin{scope}[thick, every node/.style={sloped,allow upside down}, scale=0.32]
\draw (0,0)  node[inner sep=0] (v00) {} -- (0,1) node[inner sep=0] (v01) {};
\draw (0,2)  node[inner sep=0] (v02) {} -- (0,3) node[inner sep=0] (v03) {};
\draw (1,3)  node[inner sep=0] (v13) {} -- (1,4) node[inner sep=0] (v14) {};
\draw (1,5)  node[inner sep=0] (v15) {} -- (1,6) node[inner sep=0] (v16) {};
 \draw [color=blue] (v00) to [out=150,in=-150] (v02);
 \draw [color=blue] (v01) to [out=15,in=-90] (v13);
 \draw [color=blue] (v03) to [out=90,in=-165] (v15);
 \draw [color=blue] (v14) to [out=30,in=-30] (v16);
 \draw [color=blue] (v02) to [out=15,in=-150] (v14);
 \draw [color = red] (v01) to (2,1) to (2,5) to (v15);
\draw [color = red] (v00) to (-1.5,0) to (-1.5,4) to (v14);
\filldraw (v00) circle (2.5pt);
\filldraw (v01) circle (2.5pt);
\filldraw (v02) circle (2.5pt);
\filldraw (v03) circle (2.5pt);
\filldraw (v13) circle (2.5pt);
\filldraw (v14) circle (2.5pt);
\filldraw (v15) circle (2.5pt);
\filldraw (v16) circle (2.5pt);
\end{scope}\end{tikzpicture}
$}
&
\raisebox{-3em}{$
\begin{tikzpicture}\begin{scope}[ thick, every node/.style={sloped,allow upside down}, scale=0.32]
\draw (0,0)  node[inner sep=0] (v00) {} -- (0,1) node[inner sep=0] (v01) {};
\draw (0,2)  node[inner sep=0] (v02) {} -- (0,3) node[inner sep=0] (v03) {};
\draw (2,1)  node[inner sep=0] (v11) {} -- (2,2) node[inner sep=0] (v12) {};
\draw (2,3)  node[inner sep=0] (v13) {} -- (2,4) node[inner sep=0] (v14) {};
 \draw [color=blue] (v00) to [out=30,in=-30] (v02);
  \draw [color=blue] (v01) to [out=150,in=-150] (v03);
  \draw [color=blue] (v11) to [out=30,in=-30] (v13);
   \draw [color=blue] (v12) to [out=150,in=-150] (v14);
\draw [color = red] (v00) to (1,0) to (1,4) to (v14);
\filldraw (v00) circle (2.5pt);
\filldraw (v01) circle (2.5pt);
\filldraw (v02) circle (2.5pt);
\filldraw (v03) circle (2.5pt);
\filldraw (v11) circle (2.5pt);
\filldraw (v12) circle (2.5pt);
\filldraw (v13) circle (2.5pt);
\filldraw (v14) circle (2.5pt);
\draw (0,0) node[left]{$ $} (0,1) node[left]{$ $} (1,1) node[right]{$ $} (1,2) node[left]{$ $};
\draw (3,4) node[right]{$ $} (3,3) node[right]{$ $} (2,3) node[left]{$ $} (2,2) node[right]{$ $};
\end{scope}\end{tikzpicture}
$}
& 
$v: \Sigma^{2,1}\Y_{(2,1)} \to \Y_{(2,1)}$
 \\
 \midrule 
\end{tabular}
\end{center}
\end{figure}

We find  \autoref{thm:squareunder}  and \autoref{thm:squarefix} very handy for computational purposes. These results can be applied to understand the  $\RO(\Ctwo)$-graded squaring operations on the cohomology of a wide variety of $\Ctwo$-spectra whose underlying and geometric fixed-point spectra are known. Alternatively, one can identify the action of the classical Steenrod algebra on the cohomology of the underlying as well as the geometric fixed-points of a $\Ctwo$-spectrum from the knowledge of $\RO(\Ctwo)$-graded Steenrod operations. We apply \autoref{thm:squareunder} and \autoref{thm:squarefix} to identify the $\cA$-module structure of the underlying and the geometric fixed-points of $\cA_{1}^{\Ctwo}[ \overline{\mr{v}}]$ (see \autoref{underA1} and \autoref{GFixA1}).

In \autoref{UnderFixAmodule}, we provide the $\cA$-module structure of the underlying and the geometric fixed points of {$\mr{A}_1^{\Ctwo}[\overline{\mr{v}}]$} for selected values of $\overline{\mr{v}} \in \mathcal{V}$. We express $\Sq^1$, $\Sq^2$ and $\Sq^4$ using black, blue, and red lines respectively.

\begin{rmk}[Appearance of the Joker] 
 We note that the  $\cA(1)$-module 
 \[ 
 \begin{tikzpicture}
 \begin{scope}[ thick, every node/.style={sloped,allow upside down}, scale=0.32]
\draw (3,0)  node[inner sep=0] (v30) {} -- (3,1) node[inner sep=0] (v31) {};
\draw  (2.5,2) node[inner sep=0] (v22) {};
\draw (3,3)  node[inner sep=0] (v33) {} -- (3,4) node[inner sep=0] (v34) {};
 \draw [color=blue] (v31) to [bend right=70] (v33);
 \draw [color=blue] (v30) to [bend left = 50] (v22);
 \draw [color=blue] (v22) to [bend left =50] (v34);
\filldraw (v30) circle (2.5pt);
\filldraw (v31) circle (2.5pt);
\filldraw (v22) circle (2.5pt);
\filldraw (v33) circle (2.5pt);
\filldraw (v34) circle (2.5pt);
\draw (0,0) node[left]{$ $} (0,1) node[left]{$ $} (1,1) node[right]{$ $} (1,2) node[left]{$ $};
\draw (3,4) node[right]{$ $} (3,3) node[right]{$ $} (2,3) node[left]{$ $} (2,2) node[right]{$ $};
\end{scope}\end{tikzpicture},
 \]
  often called the Joker,  is a subcomplex of the geometric fixed point of $\mr{A}_1^\R[\overline{\mr{v}}]$ if and only if $\jay_{24} =1$. Further, when $\jay_{24} =1$ then in \eqref{exactBAB},  $\epsilon$ and $\delta$ cannot both
  equal $\sfh$. This  can  easily be derived from \autoref{list}  and \autoref{thm:squarefix}. 
\end{rmk}

\begin{rmk} In \cite{BGL}, the authors construct $\mr{A}_1^\R[{\bf 0}]$ as a split summand of $\mathcal{Q}_\R^{\sma 3}$ using a certain idempotent of $\Z_{(2)}[\Sigma_3]$. Let $\tilde{\mathcal{Q}}_\R \in \SpfinR$ be such that its cohomology as an $\cA^\R(1)$-module is  isomorphic to $\mathcal{Q}_\R$, but has the additional relation \[ \Sq^4(a) = \rho \cdot c \] (in the notation of \cite[Figure~3.6]{BGL}), as an $\cA^\R$-module. If we replace $\mathcal{Q}_\R$ by a complex  $\tilde{\mathcal{Q}}_\R$ in \cite{BGL},   we get $\mr{A}_1^{\Ctwo}[\overline{\mr{v}}]$, where $\overline{\mr{v}} =(1,1,1,1,1,0,1)$ (see the last diagram in \autoref{UnderFixAmodule}). 
\end{rmk}

\begin{rmk}
The classical spectrum $\cA_1$ is a type $2$ spectrum and supports a $32$ periodic $v_2$-self-map \cite{BEM}. It remains to be seen if this $v_2$-self-map can be lifted to $\cA_{1}^\R[\overline{\mr{v}}]$ for various $\overline{\mr{v}} \in \mathcal{V}$.
\end{rmk}

 Recently in \cite{BE20}, the authors introduced a new type $2$ spectrum $\mathcal{Z}$ which is notable for admitting a $v_2$-self-map of lowest possible periodicity, that is $1$.  The low periodicity of the $v_2$-self-map makes the spectrum $\mathcal{Z}$ suitable for the analysis of the telescope conjecture which, if true, would imply that the natural map from the telescope of $\mathcal{Z}$ to the $\mr{K}(2)$-localization of $\mathcal{Z}$ is a weak equivalence.  While the telescope conjecture is true  for finite spectra of type $1$ \cites{M81,M82, MillerTel}, it is expected  to be false for finite spectra of type $\geq 2$ (see \cite{MRS}).  In fact, in \cite{BBBCX}, the authors study the  {prime $2$,  height $2$ } telescope conjecture 
 using the spectrum $\mathcal{Z}$ and lay down several conjectures ({see \cite{BBBCX}*{$\mathsection$9}}),  whose validity would lead to a disproof of the telescope conjecture. In this paper, we also construct an $\R$-motivic analogue of $\mathcal{Z}$ which  is likely to  shed light on some of these conjectures. 
 
 \begin{thm} \label{thm:Z} There exists  $\mathcal{Z}_\R \in \Sp^{\R}_{2, \mr{fin}}$  such that the underlying $\cAR(2)$-module structure of its cohomology is isomorphic to  
  \[ \rHR^{*,*}(\mathcal{Z}_\R) \cong_{\cAR(2)} \cAR(2) \otimes_{\Lambda( \tilde{\mr{Q}}_2^\R)}\bMR \]
 where $\tilde{\mr{Q}}_2^\R := [\Sq^4, \mr{Q}_1^\R]$.  
 \end{thm}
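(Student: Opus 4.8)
The plan is to deduce \autoref{thm:Z} from the $\R$-motivic Toda realization theorem, \autoref{TRT} (or, if more convenient, from one of the weaker variants \autoref{wTRT1}, \autoref{wTRT2}, \autoref{wTRT3}). The work then reduces to two tasks: exhibit a finite $\cAR$-module $\mr{M}$ whose restriction to $\cAR(2)$ is $\mr{N} := \cAR(2)\otimes_{\Lambda(\tilde{\mr{Q}}_2^\R)}\bMR$, and verify the obstruction-group vanishing \eqref{Obgroup} for $\mr{M}$. First I would record that $\mr{N}$ is free and finite over $\bMR$: since $\mr{Q}_1^\R$ is the $\R$-motivic Milnor primitive and $\tilde{\mr{Q}}_2^\R = [\Sq^4,\mr{Q}_1^\R]$ agrees, up to a unit and a power of $\tau$, with the $\R$-motivic Milnor primitive dual to $\tau_2$, a computation in the $\R$-motivic dual Steenrod algebra shows $(\tilde{\mr{Q}}_2^\R)^2 = 0$, so that $\Lambda(\tilde{\mr{Q}}_2^\R)$ is a normal exterior subalgebra of $\cAR(2)$; a motivic Milnor--Moore argument then shows $\cAR(2)$ is free as a right $\Lambda(\tilde{\mr{Q}}_2^\R)$-module, whence $\mr{N}$ is free over $\bMR$ of rank $32$, namely half the rank of $\cAR(2)$. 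In particular, any $\cAR$-module refining $\mr{N}$ satisfies the freeness and finiteness hypotheses of \autoref{TRT}.

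Next I would construct such a refinement $\mr{M}$. The $\cAR(2)$-module structure already determines the action of $\Sq^1,\Sq^2$ and $\Sq^4$ on $\mr{N}$, so only $\Sq^8$ and $\Sq^{16}$ can act nontrivially (every $\Sq^{2^k}$ whose degree exceeds the top degree of $\mr{N}$ vanishes automatically), and it remains to pin these down compatibly with the Adem relations. The natural choice is to lift the classical $\cA$-module $\rH^*(\mathcal{Z})$ of \cite{BE20}: its structure constants lie in $\F$, and the bigrading on $\cAR$ forces a unique power of $\tau$ and of $\rho$ to decorate each of them, which produces a candidate $\cAR$-module $\mr{M}$ with $\mr{M}/(\tau,\rho)\cong\rH^*(\mathcal{Z})$; that the motivic Adem relations and instability hold for $\mr{M}$ then follows from their classical counterparts by the same motivic-rigidity bookkeeping used to pin down the modules of \autoref{list}. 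Alternatively, one can build $\mr{M}$ by mimicking the inductive cell-by-cell construction of \cite{BE20}, starting from an $\R$-motivic realization of $\cAR(1)$ (which exists by \autoref{thm:128R}) and forming $\R$-motivic lifts of the relevant attaching maps.

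Finally I would verify $\Ext^{-2,f,0}_{\cAR}(\mr{M},\mr{M}) = 0$ for every $f\ge 3$. The idea is to transport the corresponding classical statement along the tower $\cAR\to\cAR/\rho\to\cA$: the $\rho$-Bockstein spectral sequence expresses the weight-$0$, stem-$(-2)$ part of $\Ext_{\cAR}(\mr{M},\mr{M})$ in terms of $\Ext$ over $\cAR/\rho$ for $\mr{M}/\rho$ together with a $\rho$-inverted contribution, and the $\tau$-Bockstein spectral sequence reduces the former to the classical groups $\Ext^{-2,\ge 3}_{\cA}(\rH^*(\mathcal{Z}),\rH^*(\mathcal{Z}))$; these vanish, which is precisely the input underlying the classical realizability (and unique realizability, compare \cite{BE20}*{Proposition~5.1}) of $\mathcal{Z}$ and can in any case be read off a minimal free resolution of $\rH^*(\mathcal{Z})$ together with its connectivity. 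Once \eqref{Obgroup} is in hand, \autoref{TRT} supplies $\mathcal{Z}_\R\in\SpfinR$ with $\rHR^{*,*}(\mathcal{Z}_\R)\cong\mr{M}$, and hence $\cong_{\cAR(2)}\cAR(2)\otimes_{\Lambda(\tilde{\mr{Q}}_2^\R)}\bMR$.

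I expect the last step to be the main obstacle: controlling the $\rho$-inverted and $\tau$-divisible contributions to $\Ext_{\cAR}(\mr{M},\mr{M})$ in weight $0$ and negative stem, so that the classical vanishing really does imply the motivic one, requires more than the crude connectivity bounds available for a finite module. By contrast, the second step is essentially forced once the classical $\cA$-module structure on $\rH^*(\mathcal{Z})$ is in hand, and the freeness statements in the first step are formal consequences of the $\R$-motivic Milnor--Moore theorem.
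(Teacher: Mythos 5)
Your overall strategy (produce an $\cAR$-module refining the $\cAR(2)$-module $\cAR(2)\otimes_{\Lambda(\tilde{\mr{Q}}_2^\R)}\bMR$ and then invoke a Toda-type realization theorem) matches the paper's, but both of your key steps have genuine gaps. The serious one is the construction of the $\cAR$-module $\mr{M}$: it is not true that the classical structure constants of $\rH^*(\mathcal{Z})$ lift in a way that is ``forced by the bigrading.'' In a given target bidegree there are in general several $\bMR$-generators decorated by different monomials in $\rho$ and $\tau$ (this is exactly why $\cAR(1)$ admits $128$ distinct $\cAR$-structures in \autoref{list} while $\cA(1)$ admits only four), and compatibility with the $\R$-motivic Adem relations, which carry extra $\rho$-terms such as $\Sq^2\Sq^3=\Sq^5+\Sq^4\Sq^1+\rho\Sq^3\Sq^1$, is a real constraint rather than bookkeeping. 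Moreover, whether the relevant $\cAR(2)$-module admits a compatible $\cAR$-action at all is delicate: \autoref{NotQ2R} shows that, for the $\cAR$-structure on $\cAR(2)$ realized in the paper, the left ideal generated by the genuine Milnor primitive $\mr{Q}_2^\R=[\Sq^4,\mr{Q}_1^\R]+\rho\Sq^5\Sq^1$ is \emph{not} an $\cAR$-submodule, so the corresponding quotient $\mathcal{B}^\R(2)$ does not inherit an $\cAR$-structure by this mechanism, whereas the $\tilde{\mr{Q}}_2^\R$-ideal does---and verifying that requires the explicit $\Sq^8$-computations of \autoref{AmoduleLift}. (Your parenthetical claim that $\tilde{\mr{Q}}_2^\R$ agrees with the Milnor primitive up to a unit and a power of $\tau$ is also false: they differ by $\rho\Sq^5\Sq^1$.) The paper sidesteps the lifting problem entirely by first realizing $\cAR(2)$ topologically: Smith's idempotent applied to $\mathcal{K}^{\sma 6}$ yields a spectrum $\cAR_2$ with $\rHR^{*,*}(\cAR_2)\cong\cAR(2)$ free of rank one (\autoref{A2free}), so an $\cAR$-structure on $\cAR(2)$ comes for free, and $\tilde{\mathcal{B}}^\R(2)$ is then exhibited as an $\cAR$-module quotient via \autoref{AmoduleLift}.

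Your obstruction-vanishing step also does not close, as you yourself concede. The $\tau$-Bockstein spectral sequence does not reduce $\Ext_{\cA^\C}$ of $\mr{M}/(\rho)$ to the classical groups $\Ext_{\cA}(\rH^*(\mathcal{Z}),\rH^*(\mathcal{Z}))$: its input is Ext over $\cA^\C/(\tau)$, which is not the classical Steenrod algebra (already $\Sq^2\Sq^2=\tau\Sq^3\Sq^1$), and only after inverting $\tau$ does classical Ext appear, so $\tau$-torsion in stem $-2$ and weight $0$ is not controlled by the classical vanishing. The paper avoids this comparison altogether by checking the hypothesis of the weak realization theorem \autoref{wTRT3} directly: since $\mr{B}^\C_2=\tilde{\mathcal{B}}^\R(2)/(\rho)$ is cyclic, its May $\mr{E}_1$-page is $\bM^\C[\sfh_{i,j}]/(\sfh_{1,0},\sfh_{1,1},\sfh_{1,2},\sfh_{2,0},\sfh_{2,1})$, and the required vanishing in the bidegrees listed in \autoref{degBA2} is a finite degreewise inspection, needing no classical input and no control of $\rho$- or $\tau$-torsion. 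To make your proposal work you would have to supply (i) an actual construction and Adem-relation verification of the $\cAR$-module $\mr{M}$, and (ii) a genuine computation of the motivic obstruction groups, at which point you would essentially be reproducing the paper's argument by harder means.
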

 
 In future work, we intend to study the properties of $\mathcal{Z}_\R$ extensively and hope to prove, among other things,  the following conjecture. 
 \begin{conj} \label{conj:v2Z} The spectrum $\mathcal{Z}_\R$ is of type $(2,2)$ and admits a $v_{(2, \mr{nil})}$-self-map
 \[ \begin{tikzcd}
 v: \Sigma^{6,3} \mathcal{Z}_\R  \rar & \mathcal{Z}_\R
 \end{tikzcd}
 \]
 of periodicity $1$. 
 \end{conj}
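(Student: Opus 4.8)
The goal is to produce a finite $\R$-motivic spectrum $\mathcal{Z}_\R$ whose $\RHF$-cohomology is the prescribed cyclic $\cAR(2)$-module. The strategy is to follow the template used in \cite{BGL} for $\mr{A}_1^\R[\mathbf{0}]$ and in \cite{BE20} for the classical $\mathcal{Z}$: first build a candidate $\cAR$-module $\mr{M}$ whose underlying $\cAR(2)$-module is $\cAR(2)\otimes_{\Lambda(\tilde{\mr{Q}}_2^\R)}\bMR$, then realize $\mr{M}$ topologically via the $\R$-motivic Toda realization theorem (\autoref{TRT}).

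\textbf{Step 1: Construct the algebraic model.} I would start from one of the $\R$-motivic realizations of $\cAR(1)$ produced in \autoref{thm:128R}, specifically the complex $\tilde{\mathcal{Q}}_\R$ alluded to in the remark above (the $\cAR(1)$-module $\mathcal{Q}_\R$ of \cite{BGL}*{Figure~3.6} with the extra relation $\Sq^4(a)=\rho\cdot c$). Following \cite{BGL}, one forms a threefold smash-power-type construction: the classical $\mathcal{Z}$ is obtained from $A_1$ by a similar "doubling" procedure (in \cite{BE20} it arises from $A_1$ via a $v_1$-self-map construction), and the $\cAR(2)$-module $\cAR(2)\otimes_{\Lambda(\tilde{\mr{Q}}_2^\R)}\bMR$ is the natural $\R$-motivic lift of the classical module $\rH^*(\mathcal{Z})$. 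Concretely, I would first verify that $\cAR(2)$ is free as a right module over the exterior subalgebra $\Lambda(\tilde{\mr{Q}}_2^\R)$ generated by the Milnor-type primitive $\tilde{\mr{Q}}_2^\R=[\Sq^4,\mr{Q}_1^\R]$ — this is the $\R$-motivic analogue of the classical fact that $A(2)$ is free over $\Lambda(Q_2)$, and it guarantees that the tensor product $\mr{M}:=\cAR(2)\otimes_{\Lambda(\tilde{\mr{Q}}_2^\R)}\bMR$ is free and finite as an $\bMR$-module, so that \autoref{TRT} applies in principle. Then I would equip $\mr{M}$ with a full $\cAR$-module structure; the underlying $\cAR(2)$-structure is forced, and the higher $\Sq^{2^k}$ ($k\geq 3$) can be chosen compatibly, possibly with some freedom that we may fix arbitrarily or pin down by Betti-realizing the classical choice.

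\textbf{Step 2: Verify the obstruction group vanishes.} By \autoref{TRT}, it suffices to show $\Ext^{-2,f,0}_{\cAR}(\mr{M},\mr{M})=0$ for all $f\geq 3$. Since $\mr{M}$ is induced up from a module over $\cAR(2)$ (indeed over the smaller algebra $\Lambda(\tilde{\mr{Q}}_2^\R)$), I would use a change-of-rings isomorphism to reduce the computation of $\Ext_{\cAR}(\mr{M},\mr{M})$ to $\Ext$ over $\cAR/\!\!/\cAR(2)$-type quotients, or more directly use the fact that $\cAR(2)$ is a large finite subalgebra so that $\mr{M}$ looks like a free $\cAR(2)$-module in a range. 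A cleaner route: the internal degree $-2$ and weight $0$ constraint, combined with $f\geq 3$, places us in a sparse region of the $\R$-motivic Ext chart; I would run the $\rho$-Bockstein spectral sequence (or $\tau$-inverted comparison with $\C$-motivic/classical Ext, as in \cite{BGL}) to show the relevant bidegrees are empty. The classical analogue — that $\rH^*(\mathcal{Z})$ is Toda-realizable — is known, and the $\R$-motivic Ext groups degenerate to the classical ones after inverting $\tau$ and setting $\rho=0$, so the main content is controlling the $\rho$-torsion contributions in stems $-2$, which by weight and filtration considerations should be confined to low filtration.

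\textbf{Main obstacle.} The hard part will be \textbf{Step 2}, the vanishing of $\Ext^{-2,f,0}_{\cAR}(\mr{M},\mr{M})$ for $f\geq 3$: $\mathcal{Z}$ is a type $2$ complex, so $\mr{M}$ is considerably larger than $\cAR(1)$ and the Ext computation is correspondingly heavier, and unlike the $\cAR(1)$ case there is no single explicit $8$-dimensional $\bMR$-basis to brute-force. I expect to need a structural input — either freeness of $\cAR(2)$ over $\Lambda(\tilde{\mr{Q}}_2^\R)$ together with a change-of-rings spectral sequence, or a Margolis-homology / Adams-periodicity argument bounding the filtration in which $\Ext$ can be nonzero in stem $-2$. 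A secondary subtlety is making sure the $\cAR$-module structure on $\mr{M}$ we write down is actually well-defined (the Adem relations in $\cAR$ involving $\tau$ and $\rho$ impose nontrivial constraints on the higher operations), which I would handle by Betti-realization: the classical module structure lifts, and the $\R$-motivic Adem relations reduce to the classical ones modulo $(\tau-1,\rho)$, so any ambiguity lives in $\rho$-divisible terms that can be absorbed. Once $\mr{M}$ is realized by some $\mathcal{Z}_\R\in\Sp^\R_{2,\mr{fin}}$, the statement about the underlying $\cAR(2)$-module structure of $\rHR^{*,*}(\mathcal{Z}_\R)$ is immediate from the construction.
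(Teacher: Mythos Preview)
Your proposal addresses the wrong statement. The item labeled \autoref{conj:v2Z} is a \emph{conjecture}, not a theorem: it asserts that the already-constructed spectrum $\mathcal{Z}_\R$ is of type $(2,2)$ and supports a $1$-periodic $v_{(2,\mathrm{nil})}$-self-map $\Sigma^{6,3}\mathcal{Z}_\R\to\mathcal{Z}_\R$. The paper explicitly does \emph{not} prove this; it is flagged as future work. What you have written is a plan to \emph{construct} $\mathcal{Z}_\R$ --- that is, you are sketching a proof of \autoref{thm:Z}, not of \autoref{conj:v2Z}. Nowhere in your outline do you address the type of $\mathcal{Z}_\R$ or the existence of a self-map in bidegree $(6,3)$; your final sentence makes this clear, since you declare victory once $\mathcal{Z}_\R$ exists with the correct $\cAR(2)$-module cohomology.

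Even as a proof of \autoref{thm:Z}, your Step~1 diverges from the paper's route and contains a muddle. The paper does not build the $\cAR$-module structure on $\tilde{\mathcal{B}}^\R(2)$ by starting from a realization of $\cAR(1)$ or from $\tilde{\mathcal{Q}}_\R$; rather, it first constructs a finite spectrum $\cA_2^\R$ realizing $\cAR(2)$ via a Smith-idempotent splitting of $\mathcal{K}^{\wedge 6}$ (\autoref{A2free}), and then reads off the $\cAR$-module structure on $\tilde{\mathcal{B}}^\R(2)$ from the short exact sequence \eqref{exactAB} by checking directly that the left ideal generated by $\tilde{\mr{Q}}_2^\R$ is closed under $\Sq^8$ (\autoref{AmoduleLift}). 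Your suggestion to ``write down'' higher $\Sq^{2^k}$ by hand and fix ambiguities via Betti realization is precisely the kind of step the paper avoids by having an honest ambient $\cAR$-module $\mr{A}_2^\R$ to work inside. For Step~2 the paper uses the weakest form of Toda realization (\autoref{wTRT3}) via the May $\mr{E}_1$-page \eqref{MayB}, which is a straightforward degree check rather than the $\rho$-Bockstein/$\tau$-inversion argument you propose; your approach is not wrong in spirit but is more laborious than necessary.
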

 
 \subsection*{Acknowledgements} The authors have benefited from  conversations with  Mike Hill, Nick Kuhn, Piotr Pstragowski, Paul VanKoughnett, and Dylan Wilson. 
{  The first author would  also like to acknowledge his debt to Mark Behrens for his relentless support.}
 
  \subsection*{Organization of the paper}
  In \autoref{Sec:Toda}, we discuss the $\R$-motivic Toda realization \autoref{TRT} and derive various weak forms that are suitable for applications. 
In \autoref{sec:Comparison},  we construct the equivariant Steenrod operations using the equivariant extended power construction
and prove \autoref{thm:squareunder} and \autoref{thm:squarefix}, which
establish comparisons  with the classical Steenrod operations. In \autoref{sec:Trealization}, we apply the discussion in \autoref{Sec:Toda} to obtain the $\R$-motivic topological realizations of $\cA^\R(1)$ and  {analyze the properties of their Betti realizations using results from \autoref{sec:Comparison}.} In \autoref{sec:Z}, we construct the $\R$-motivic spectrum $\mathcal{Z}_\R$ using a method of Smith.  Finally, the short \autoref{AdemSec} lists the Adem relations in the $\R$-motivic Steenrod algebra.
  
\section{$\R$-motivic Toda realization theorem} \label{Sec:Toda}

The classical Toda realization theorem \cite{T71} (see also \cite{BE20}*{Theorem 3.1}), is recast in the modern literature as a special case of Goerss-Hopkins obstruction theory \cite{GH} (when the chosen operad is trivial). This obstruction theory can be generalized to the $\R$-motivic setting \cite{MazelG}, and \autoref{TRT} would then be a special case of such a generalization. 

More recent work of \cite{PV} conceptualizes  Goerss-Hopkins obstruction theory in the general setup of stable $\infty$-categories with $t$-structures. If we set $\mathcal{C} = \SpfinR$, $\mr{A} = \bS_{\RHF}$, and let $\mr{K}$ to be a finite $\cA_*^{\R}$-comodule  in  \cite{PV}*{Corollary~4.10}, then we get a sequence of obstruction classes 
\begin{equation} \label{obclass}
 \uptheta_n \in \Ext_{\cA_*^\R}^{-2, n+2, 0}(\mr{K}, \mr{K})
 \end{equation}
for each $n\geq 0$, the vanishing of which guarantees the existence of an $\bS_{\RHF}$-module whose homology is isomorphic to $\mr{K}$ as an $\cA_*^\R$-comodule. Since the $t$-structure in $\Sp^\R$ does not change the motivic weight, the obstruction classes in \eqref{obclass}  lie in the Ext-groups of motivic weight $0$. 

If $\mr{M}$ is a finite $\bM^\R$-free $\cAR$-module then $\mr{K}:= \hom_{\bM^\R}(\mr{M}, \bM^\R)$ is a finite $\cA_*^\R$-comodule,
\[ 
\Ext_{\cA_*^\R}^{*,*,*}(\mr{K}, \mr{K}) \cong \Ext_{\cAR}^{*,*,*}(\mr{M}, \mr{M}),
\]
and therefore, \autoref{TRT} follows. Alternatively, one can prove \autoref{TRT} simply by emulating the classical proof (as exposed in \cite{BE20}*{$\mathsection 3$}).  

The purpose of this section is to prove various weaker forms of the $\R$-motivic Toda realization theorem (\autoref{TRT}), which are perhaps more convenient for  application purposes.  Explicit calculation of  $\Ext_{\cAR}^{*,*,*}(\mr{M}, \mr{M})$ can often be difficult, and one can use  a sequence of spectral sequences to approximate these ext groups. Each such approximation leads to a  corresponding weaker form.

\subsection{Weak $\R$-motivic Toda realization -- version (I)}  \label{subsec:wTRT1}

Let $\mr{M}$ be an $\cAR$-module whose underlying $\bM^\R$-module is free and finitely generated. Let  $\mathcal{B}_{\mr{M}} $ denote its   $\bM^\R$-basis and $\mathcal{D}_{\mr{M}}$ denote the collection of bidegrees  in which there is an  element in $\mathcal{B}_{\mr{M}}$. For any element $x \in \mr{M}^{s, w}$, we  let ${\sf t}(x) = s + w$ and  define
 \[ \mr{M}_{\geq n} := \bM^\R \cdot \{  b \in \mathcal{B}_{\mr{M}}: {\sf t}(b) \geq n  \}  \]
 as the free  sub $\bM^\R$-module of $\mr{M}$ generated by $\{  b \in \mathcal{B}_{\mr{M}}: {\sf t}(b) \geq n  \}$.
 
Note that the $\cAR$-module structure of $\mr{M}$ is determined by the action of $\cAR$ on the elements of $\mathcal{B}_{\mr{M}}$ and the Cartan formula. This, along with the fact that  ${ \sf t}(a) \geq 0 $ for all $a \in \cAR$, implies that  $\mr{M}_{\geq n}$ are also a  sub $\cA^{\R}$-module of $\mr{M}$. Therefore, we get an $\cAR$-module filtration of $\mr{M}$
\[ 
\mr{M} = \mr{M}_{\geq k} \supset \mr{M}_{\geq k+1} \supset  \dots \supset \mr{M}_{\geq k+l} = {\bf 0} 
\]
such that we for each $i$ there is a short exact sequences 
\begin{equation} \label{SES-M}
\begin{tikzcd}
0 \rar & \mr{M}_{\geq i +1} \rar & \mr{M}_{\geq i} \rar & \underset{\{ b \in \mathcal{B}_{\mr{M}}: {\sf t}(b) = i \} }{\bigoplus} \Sigma^{|b|} \bM^\R \rar & 0
\end{tikzcd}
\end{equation}
of $\cAR$-modules.  

A short exact sequence of $\cAR$-modules gives a long exact sequence in $\Ext$. By splicing the long exact sequences induced by \eqref{SES-M}, we get an ``algebraic" Atiyah-Hirzebruch spectral sequence
\begin{equation} \label{aAHSS}
 \mr{E}_2^{s', w', s, f, w} :=  \mathcal{B}_{\mr{M}}^{s', w'} \otimes  \Ext_{\cAR}^{s,f,w}(\mr{M}, \bMR) \Rightarrow \Ext_{\cAR}^{s - s' ,f,w - w'}(\mr{M}, \mr{M})
\end{equation}
and a corresponding weak version of \autoref{TRT}, along with a uniqueness criterion, which is a weak form of \autoref{TUT}.

\begin{thm} \label{wTRT1} Let $\mr{M}$ denote an $\cAR$-module whose underlying $\bMR$-module is free and finite.  Suppose 
\[ \Ext_{\cAR}^{s-2,f,w}(\mr{M}, \bMR) = 0\]
for $f \geq 3$ whenever $(s,w)  \in  \mathcal{D}_{\mr{M}}$. Then there exists an $\rX \in \Sp^\R_{2, \mr{fin}}$ such that 
$ \rHR^{*,*}(\rX) \cong \mr{M}$ 
as an $\cAR$-module.  Further, such a realization is unique if 
\[ \Ext_{\cAR}^{s-1,f,w}(\mr{M}, \bM^\R) = 0\]
for  all $f \geq 2$ and  $(s,w)  \in  \mathcal{D}_{\mr{M}}$.
\end{thm}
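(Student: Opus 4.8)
The plan is to deduce \autoref{wTRT1} from the already-established \autoref{TRT} and its uniqueness companion \autoref{TUT}, using the algebraic Atiyah--Hirzebruch spectral sequence \eqref{aAHSS} as a bridge. The hypotheses have been arranged precisely so that the obstruction group \eqref{Obgroup} demanded by \autoref{TRT}, and the indeterminacy group demanded by \autoref{TUT}, are forced to vanish: each of their $\mr{E}_2$-contributions in \eqref{aAHSS} is one of the coefficient groups $\Ext_{\cAR}^{s-2,f,w}(\mr{M},\bMR)$ (respectively $\Ext_{\cAR}^{s-1,f,w}(\mr{M},\bMR)$) with $(s,w)\in\mathcal{D}_{\mr{M}}$, which the hypothesis declares to vanish in the relevant range of $f$.

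First I would record that the filtration $\mr{M}=\mr{M}_{\geq k}\supset\dots\supset\mr{M}_{\geq k+l}=\mathbf{0}$ assembled from \eqref{SES-M} is finite, so \eqref{aAHSS} converges with a finite filtration on each tridegree of its abutment; in particular a tridegree of $\Ext_{\cAR}^{*,*,*}(\mr{M},\mr{M})$ vanishes as soon as the finitely many $\mr{E}_2$-terms contributing to it vanish, with no $\lim^1$ or boundedness subtleties. As the form of \eqref{aAHSS} records, the homological degree $f$ matches between a contribution and the abutment, so a class in $\Ext_{\cAR}^{-2,f,0}(\mr{M},\mr{M})$ is a subquotient of $\bigoplus \mr{E}_2^{s',w',s,f,w}$ taken over the indices with $s-s'=-2$ and $w-w'=0$. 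Since $\mathcal{B}_{\mr{M}}^{s',w'}=0$ unless $(s',w')\in\mathcal{D}_{\mr{M}}$, the only possibly nonzero contributions are subquotients of $\mathcal{B}_{\mr{M}}^{s',w'}\otimes\Ext_{\cAR}^{s'-2,f,w'}(\mr{M},\bMR)$ with $(s',w')\in\mathcal{D}_{\mr{M}}$; the hypothesis kills each of these for $f\geq 3$, whence $\Ext_{\cAR}^{-2,f,0}(\mr{M},\mr{M})=0$ for all $f\geq 3$. This is exactly condition \eqref{Obgroup}, and \autoref{TRT} then produces an $\rX\in\SpfinR$ with $\rHR^{*,*}(\rX)\cong\mr{M}$.

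For the uniqueness clause I would run the same index chase one topological degree higher: the contributions to $\Ext_{\cAR}^{-1,f,0}(\mr{M},\mr{M})$ are subquotients of $\mathcal{B}_{\mr{M}}^{s',w'}\otimes\Ext_{\cAR}^{s'-1,f,w'}(\mr{M},\bMR)$ with $(s',w')\in\mathcal{D}_{\mr{M}}$, which the second hypothesis makes vanish for $f\geq 2$. Because $\rHR^{*,*}(\rX)\cong\mr{M}$ as $\cAR$-modules, the same vanishing holds with $\rHR^{*,*}(\rX)$ in place of $\mr{M}$, so \autoref{TUT} applies and $\rX$ is unique up to weak equivalence.

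The argument is essentially formal, so there is no deep obstacle; what requires care is the bookkeeping with the five indices of \eqref{aAHSS}, and in particular checking that the motivic weight $w'$ of a cell exactly cancels the weight $w$ of the coefficient $\Ext$-group, so that the stated vanishing range genuinely covers every contribution to the tridegrees $(-2,f,0)$ and $(-1,f,0)$ of the respective abutments. The finiteness of $\mr{M}$ disposes of convergence, and \autoref{TRT} and \autoref{TUT} do all of the homotopy-theoretic work; if one prefers not to invoke \autoref{TRT} as a black box, the same $\Ext$-estimates can be fed directly into the cell-by-cell obstruction argument of \cite{BE20}*{$\mathsection 3$}, with identical conclusions.
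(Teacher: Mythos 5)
Your proposal is correct and follows exactly the route the paper intends: the hypotheses kill every $\mr{E}_2$-contribution of the algebraic Atiyah--Hirzebruch spectral sequence \eqref{aAHSS} to the tridegrees $(-2,f,0)$ (for $f\geq 3$) and $(-1,f,0)$ (for $f\geq 2$) of $\Ext_{\cAR}^{*,*,*}(\mr{M},\mr{M})$, so the vanishing conditions of \autoref{TRT} and \autoref{TUT} hold and those theorems supply existence and uniqueness. Your added remarks on finite filtration/convergence and the index bookkeeping are exactly the points the paper leaves implicit, and they are handled correctly.
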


\subsection{Weak $\R$-motivic  Toda realization -- version (II)}  \label{subsec:wTRT2}
\

For any $\cAR$-module $\mr{M}$ which is $\bMR$-free, the quotient $\mr{M}/(\rho)$ is an $\cA^{\C}$-module. In particular,  \[ \cAR/(\rho) \cong \cA^\C\] as a graded Hopf-algebra. Therefore, we have  a spectral sequence 
\begin{equation} \label{rhoBSS}
\begin{tikzcd} 
 {^{\rho}}\mr{E}_{2}^{s,f,w,i} := \bigoplus_{i \geq 0} \Ext_{\cA^\C}^{s+i,f, w+i}(\mr{M}/(\rho), \bM^{\C}) \rar[Rightarrow]  & \Ext_{\cAR}^{s,f,w}(\mr{M}, \bMR) 
 \end{tikzcd}
 \end{equation}
 which is often called the  \emph{(algebraic) $\rho$-Bockstein spectral sequence}.  
  Thus we get the following version of the $\R$-motivic Toda realization and uniqueness theorem which is weaker than \autoref{wTRT1}.  
  
 \begin{thm} \label{wTRT2} Let $\mr{M}$ denote an $\cAR$-module whose underlying $\bMR$-module is free and finite.  Suppose 
\[ \Ext_{\cA^\C}^{s-2+i,f,w+i}(\mr{M}/(\rho), \bM^\C) = 0\]
for $f \geq 3$ and all $i\geq0$ whenever  $(s,w)  \in  \mathcal{D}_{\mr{M}}$, then there exists an $\rX \in \Sp^\R_{2, \mr{fin}}$ such that 
$ \rHR^{*,*}(\rX) \cong \mr{M}$ 
as an $\cAR$-module. Further, such a realization is unique if 
\[ \Ext_{\cA^\C}^{s-1+i,f,w+i}(\mr{M}/(\rho), \bM^\C) = 0\]
for  all $f \geq 2$, $i \geq0$ and  $(s,w)  \in  \mathcal{D}_{\mr{M}}$.
   \end{thm}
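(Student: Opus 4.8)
The plan is to deduce \autoref{wTRT2} directly from \autoref{wTRT1} by using the algebraic $\rho$-Bockstein spectral sequence \eqref{rhoBSS} to control the $\Ext$-groups $\Ext_{\cAR}^{s-2,f,w}(\mr{M}, \bMR)$ and $\Ext_{\cAR}^{s-1,f,w}(\mr{M}, \bMR)$ appearing in the hypotheses of \autoref{wTRT1}. The key observation is that the spectral sequence \eqref{rhoBSS} is concentrated in a \emph{fixed} cohomological degree $f$: passing from $\cAR$ to $\cA^{\C} = \cAR/(\rho)$ does not change the Adams filtration, so the Bockstein differentials preserve $f$ and merely shuffle elements among different $\rho$-weights $i$. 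Consequently, if the $E_2$-page $\bigoplus_{i\geq 0}\Ext_{\cA^\C}^{s-2+i,f,w+i}(\mr{M}/(\rho),\bM^\C)$ vanishes for a given $(s,f,w)$, then the abutment $\Ext_{\cAR}^{s-2,f,w}(\mr{M},\bMR)$ vanishes as well.

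The steps I would carry out are as follows. First, recall the construction of \eqref{rhoBSS}: filter $\bMR = \F[\tau,\rho]$ by powers of $\rho$, observe that $\mr{M}$ is $\bMR$-free hence $\rho$-torsion-free, so the associated graded of $\mr{M}$ with respect to the $\rho$-adic filtration is $\mr{M}/(\rho)\otimes_\F \F[\rho]$ as an $\cAR$-module on which $\rho$ acts freely, and this identifies the $E_1$ (equivalently $E_2$) page with the indicated sum of $\cA^\C$-Ext groups. Crucially, note that this spectral sequence is natural and multiplicative, and a convergence statement holds because $\mr{M}$ is finite and $\bMR$-free (so the filtration is finite in each bidegree after accounting for the internal grading). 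Second, fix $(s,w) \in \mathcal{D}_{\mr{M}}$ and $f \geq 3$; by the hypothesis of \autoref{wTRT2}, every group $\Ext_{\cA^\C}^{s-2+i,f,w+i}(\mr{M}/(\rho),\bM^\C)$ with $i \geq 0$ vanishes, so the $E_2$-page of \eqref{rhoBSS} in total degree $(s-2,f,w)$ is zero, hence $\Ext_{\cAR}^{s-2,f,w}(\mr{M},\bMR) = 0$. This is exactly the hypothesis of the realization half of \autoref{wTRT1}, so we obtain $\rX \in \Sp^\R_{2,\mr{fin}}$ with $\rHR^{*,*}(\rX) \cong \mr{M}$. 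Third, for the uniqueness clause, repeat the argument verbatim with $s-2$ replaced by $s-1$ and $f \geq 3$ replaced by $f \geq 2$: the hypothesis forces $\Ext_{\cAR}^{s-1,f,w}(\mr{M},\bM^\R) = 0$ for all $f \geq 2$ and $(s,w) \in \mathcal{D}_{\mr{M}}$, which is the uniqueness hypothesis of \autoref{wTRT1}, giving uniqueness up to weak equivalence.

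The main obstacle I anticipate is not the logical chain, which is a routine comparison, but making the convergence and bookkeeping of the $\rho$-Bockstein spectral sequence \eqref{rhoBSS} precise enough that ``$E_2 = 0$ in a given tri-degree implies the abutment vanishes there.'' One must check that for each fixed $(s,f,w)$ only finitely many $i$ contribute (which follows because $\mr{M}/(\rho)$ is a finite $\bM^\C$-free module, so $\Ext_{\cA^\C}^{s-2+i,f,w+i}(\mr{M}/(\rho),\bM^\C)$ is finite-dimensional and, for fixed $f$, vanishes once $i$ is large relative to the top degree of $\mr{M}$), and that there is no lim$^1$ or completion subtlety — again guaranteed by finiteness. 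Once that is in hand, the deduction is immediate, and I would simply remark that \autoref{wTRT2} is ``weaker than \autoref{wTRT1}'' precisely in the sense that its hypotheses imply those of \autoref{wTRT1} via \eqref{rhoBSS}, so no independent proof is needed.
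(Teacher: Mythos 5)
Your proposal is correct and is essentially the paper's own (largely implicit) argument: \autoref{wTRT2} is deduced from \autoref{wTRT1} by observing that the vanishing hypotheses kill every contribution to the relevant tri-degrees of the $\rho$-Bockstein spectral sequence \eqref{rhoBSS}, hence kill the abutment $\Ext_{\cAR}^{s-2,f,w}(\mr{M},\bMR)$ (resp.\ $s-1$, $f\geq 2$ for uniqueness). One small aside: the Bockstein differentials do not in fact preserve the Adams filtration $f$ (they raise it), but your conclusion only uses convergence of \eqref{rhoBSS} together with finiteness of $\mr{M}$, so nothing in the argument breaks.
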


\subsection{Weak $\R$-motivic Toda realization -- version (III)} \label{subsec:wTRT3}
\

{Similarly to the classical case, the $\C$-motivic Steenrod algebra enjoys an  increasing filtration called the May filtration,  which is easier to express on its  dual (see \cite{LowMilnorWitt}). 
On $\cA^\C_*$, the May filtration  is  induced by  assigning the May weights }
\[ \may(\tau_{i-1}) = \may(\upxi_{i}^{2^j}) = 2i -1 \]
and extending it multiplicatively. The associated graded is an exterior algebra 
\begin{equation} \label{xi-ij}
{\gr( \cA^\C)} \cong \Lambda_{\bM^\C}(\upxi_{i,j} : i \geq 1, j \geq 0), 
\end{equation}
where $\upxi_{i,0}$ represents $(\tau_{i-1})_*$ and $(\upxi_{i,j+1})_*$ represents $(\upxi_i^{2^j})_*$ in the associated graded. When $\mr{M} = \bM^\R$ in \eqref{eqn:maySS}, then 
\[ {^{\mr{May}}}\mr{E}_{1, \bM^\C }^{*,*,*,*} \cong \bM^\C[\sfh_{i,j}: i \geq 1, j \geq 0] , \]
 where $\sfh_{i,j}$ represents the class $\upxi_{i,j}$. The  $(s, f, w, \may)$-degrees of these generators are given by  
\[ |\sfh_{i,j}| =  \left\lbrace \begin{array}{cll} 
(2^{i} -2,1,2^{i-1} -1,i) & \text{if $j =0$, and, } \\
 (2^j(2^{i} -1)-1, 1,2^{j-1}(2^i-1) ,i) & \text{otherwise.}
\end{array} \right.
 \]

When $\mr{M}$ is a cyclic  $\cAR$-module, $\mr{M}/(\rho)$ is also cyclic as an $\cA^\C$-module, thus the May filtration induces a filtration on $\mr{M}/(\rho)$. Thus, we get a  corresponding May spectral sequence 
\begin{equation} \label{eqn:maySS}
 {^{\mr{May}}}\mr{E}_{1, \mr{M}/(\rho)}^{s,f,w,\may} := \Ext_{\gr(\cA^\C)}^{s,f,w,m}(\gr (\mr{M}/(\rho)), \bM^\C) \Rightarrow \Ext_{\cA^\C}^{s,f, w}(\mr{M}/(\rho), \bM^{\C})
 \end{equation}
computing the input of the $\rho$-Bockstein spectral sequence \eqref{rhoBSS}. Thus we can formulate a  version  of $\R$-motivic Toda realization theorem which is  even weaker than \autoref{wTRT2}.

\begin{thm} \label{wTRT3} Let $\mr{M}$ denote an cyclic $\cAR$-module whose underlying $\bMR$-module is free and finite.  Suppose 
\[ {^{\mr{May}}}\mr{E}_{1,  \mr{M}/(\rho)}^{s-2+i,f,w+i, *}   = 0.\]
for $f \geq 3$ and all $i\geq0$ whenever  $(s,w)  \in  \mathcal{D}_{\mr{M}}$. Then there exists an $\rX \in \Sp^\R_{2, \mr{fin}}$ such that 
$ \rHR^{*,*}(\rX) \cong \mr{M}$ 
as an $\cAR$-module. Further, such a realization is unique if 
\[ {^{\mr{May}}}\mr{E}_{1,  \mr{M}/(\rho)}^{s-1+i,f,w+i, *}   = 0\]
for $f \geq 2$, $i \geq0$ and  $(s,w)  \in  \mathcal{D}_{\mr{M}}$.
 \end{thm}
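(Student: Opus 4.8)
The plan is to deduce \autoref{wTRT3} from \autoref{wTRT2} by reading off the required $\Ext_{\cA^\C}$-vanishing lines from the May spectral sequence \eqref{eqn:maySS}. First I would invoke the hypothesis that $\mr{M}$ is cyclic: then $\mr{M}/(\rho)$ is a cyclic $\cA^\C$-module, the May filtration of $\cA^\C$ descends to a filtration of $\mr{M}/(\rho)$, and \eqref{eqn:maySS} is available. Its $E_1$-page is $\Ext_{\gr(\cA^\C)}^{s,f,w,\may}(\gr(\mr{M}/(\rho)),\bM^\C)$ graded by May weight, it abuts to $\Ext_{\cA^\C}^{s,f,w}(\mr{M}/(\rho),\bM^\C)$, and every differential preserves the tridegree $(s,f,w)$ while shifting only the May weight.

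The second step is to check that \eqref{eqn:maySS} converges strongly. In a fixed tridegree $(s,f,w)$ the $E_1$-page is finite-dimensional over $\F$: since $\gr(\cA^\C)\cong\Lambda_{\bM^\C}(\upxi_{i,j})$ has only finitely many exterior generators of stem at most $s$, a monomial of homological degree $f$ in $\Ext_{\gr(\cA^\C)}(\bM^\C,\bM^\C)=\bM^\C[\sfh_{i,j}]$ contributing to stem $s$ is a product of $f$ of these finitely many generators (times a power of $\tau$ pinned down by the weight), so only finitely many such monomials occur; combined with the finite rank of $\mr{M}/(\rho)$ over $\bM^\C$, this bounds the whole group in that tridegree. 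Hence only finitely many May weights occur in each tridegree, the May filtration of the abutment is finite there, and the spectral sequence converges. In particular, if the full May-graded group ${}^{\mr{May}}\mr{E}_{1,\mr{M}/(\rho)}^{\sigma,f,\omega,*}$ vanishes in a tridegree $(\sigma,f,\omega)$, then so does ${}^{\mr{May}}\mr{E}_{\infty,\mr{M}/(\rho)}^{\sigma,f,\omega,*}$ (being a subquotient of $E_1$), and therefore $\Ext_{\cA^\C}^{\sigma,f,\omega}(\mr{M}/(\rho),\bM^\C)=0$.

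The rest is a formal transfer. Taking $(\sigma,f,\omega)=(s-2+i,f,w+i)$ for all $i\geq 0$, all $f\geq 3$, and all $(s,w)\in\mathcal{D}_{\mr{M}}$ converts the first hypothesis of \autoref{wTRT3} into the hypothesis $\Ext_{\cA^\C}^{s-2+i,f,w+i}(\mr{M}/(\rho),\bM^\C)=0$ of \autoref{wTRT2}, which produces the required $\rX\in\Sp^\R_{2,\mr{fin}}$ with $\rHR^{*,*}(\rX)\cong\mr{M}$. For the uniqueness clause, the same passage with $(\sigma,f,\omega)=(s-1+i,f,w+i)$ and $f\geq 2$ yields $\Ext_{\cA^\C}^{s-1+i,f,w+i}(\mr{M}/(\rho),\bM^\C)=0$, which is exactly what the uniqueness half of \autoref{wTRT2} requires.

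The step that I expect to need the most care is the finiteness bookkeeping in the second paragraph. Because $\sfh_{1,0}$ has stem $0$, one cannot bound the number of contributing May monomials by the stem alone; the argument has to use the homological degree $f$ (which caps the total number of exterior generators in a monomial, the algebra being polynomial on the $\sfh_{i,j}$) to make the count finite, so that ``${}^{\mr{May}}\mr{E}_1=0$ in a tridegree'' genuinely forces the abutment to vanish there. Once that point is pinned down, the implication \autoref{wTRT3} $\Rightarrow$ \autoref{wTRT2}, and hence the conclusion, is immediate.
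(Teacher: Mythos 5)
Your proposal is correct and follows essentially the same route the paper intends: \autoref{wTRT3} is deduced from \autoref{wTRT2} by noting that vanishing of the May $E_1$-page \eqref{eqn:maySS} in a fixed tridegree forces vanishing of $\Ext_{\cA^\C}$ there, since the abutment's associated graded is a subquotient of $E_1$. The extra convergence bookkeeping you flag is a reasonable precaution that the paper leaves implicit, but it does not change the argument.
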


\section{A comparison between $\Ctwo$-equivariant and classical squaring operations}
\label{sec:Comparison}

For any $\Ctwo$-equivariant space $\rX \in \Top_*^{\Ctwo}$  we can functorially assign two non-equivariant spaces -- the underlying space $\ParRes{\rX}$, which is obtained by restricting the action of  $\Ctwo$ to the trivial group,  and the space of $\Ctwo$-fixed-points $\Fix{\rX}$.  For a $\Ctwo$-equivariant spectrum $\mr{E} \in \Sp^{\Ctwo}$, restricting the action to the trivial subgroup results in a monoidal functor 
\[ 
\begin{tikzcd}
\Res: \Sp^{\Ctwo} \rar & \Sp
\end{tikzcd}
\]
 that identifies the underlying spectrum. However, there are two different notions of fixed-point spectrum -- the categorial fixed-points and the geometric fixed-points.  
 
   The categorical fixed-points functor  is a lax monodial functor
 \[ 
\begin{tikzcd}
( -)^\Ctwo: \Sp^{\Ctwo} \rar & \Sp, 
\end{tikzcd}
\]
which is defined so that  $\pi_k(\CFix{\mr{E}}) \cong  \pi_{k}^{\Ctwo}(\mr{E})$, but it does not interact well  with infinite suspensions. The correction term is explained by the  tom~Dieck splitting  \cite{LMS}*{Theorem~V.11.1}: 
\begin{equation} \label{TomD}
\CFix{(\Sigma^\infty_{\Ctwo}\rX)} \simeq \Sigma^\infty (\CFix{\rX}) \vee  \Sigma^\infty (\rX_{\mr{h} \Ctwo}),
\end{equation}
where $\rX_{\mr{h} \Ctwo}$ is the homotopy orbit space. Let  $\widetilde{\mr{E}\Ctwo}:= \mr{Cof}( \mr{E}\Ctwo_+ \to \bS )$. The geometric fixed-point functor 
\[
\begin{tikzcd}
\GFixFunctor: \Sp^{\Ctwo} \rar & \Sp,
\end{tikzcd}
\]
is a symmetric monoidal functor given by $\GFix{\mr{E}} := \CFix{(\mr{E} \sma \widetilde{\mr{E}\Ctwo})}$.  When $\mr{E} \in \Sp^{\Ctwo}$, 
\begin{equation} \label{GFixsuspension} 
\GFix{\Sigma^\infty_{\Ctwo}\mr{E}} \simeq  \Sigma^{\infty}\Fix{\mr{E}}
\end{equation}
is the  first component  in  \eqref{TomD}. For any $\mr{E} \in \Sp^{\Ctwo}$, there is a natural map of spectra 
\[ 
\begin{tikzcd}
\upiota_{\mr{E}}: \CFix{\mr{E}} \rar & \GFix{\mr{E}} 
\end{tikzcd}
\]
 induced by the map $\bS \to \widetilde{\mr{E}\Ctwo}$.

 The Eilenberg-Mac~Lane spectrum $\rH\uF$ is an $\bE_\infty^{\Ctwo}$-ring (\cite{LMS}*{VII}), i.e. a commutative monoid as a genuine $\Ctwo$-spectrum. The restriction $ \ParRes{\rH\uF} \simeq \rH\F$, the categorical fixed-points $\CFix{\rH\uF} \simeq \rH\F$ and the geometric fixed-points $\GFix{\rH\uF} \simeq  \rH\F[t]$  are $\bE_\infty$-rings. 
 It follows from the knowledge of $\bM^\Ctwo := \pi_{\star}^\Ctwo \rH\uF$ that 
\[ 
\begin{tikzcd}
\CFix{(\Sigma^{n \upsigma} \rH\uF)} \simeq \bigvee_{i=0}^{n} \Sigma^i \rH\F   \rar[hook] & \GFix{\rH\uF} \simeq \mr{colim}_n \  \CFix{(\Sigma^{n \upsigma} \rH\uF)} \simeq \rH\F[t]
\end{tikzcd}
\]
is the inclusion of the first $(n+1)$ components. The above map clearly splits. One can endow $\CFix{(\Sigma^{n \upsigma} \rH\uF)}$ with an $\bE_{\infty}$-structure isomorphic to  the truncated polynomial algebra $\rH\F[t]/(t^{n+1})$ so that the splitting map 
\[ 
\begin{tikzcd}
\uppi_{\F}^{(n)}:\GFix{\rH\uF} \simeq \rH\F[t] \rar[two heads] &\CFix{(\Sigma^{n \upsigma} \rH\uF)}  \simeq  \rH\F[t]/(t^{n+1})
\end{tikzcd}
\]
is an $\bE_{\infty}$-map. The composition
\begin{equation} \label{splittingHF}
\begin{tikzcd}
\CFix{\rH\uF} \rar[hook, "\upiota_{\F}"] & \GFix{\rH\uF} \rar[two heads, "\uppi_{\F}^{(0)}"] & \CFix{\rH\uF}
\end{tikzcd}
\end{equation}
is the identity and  exhibits  $\GFix{\rH\uF}$ as an augmented $\rH\F$-algebra.

For any $\Ctwo$-space $\rX \in \Top^\Ctwo_*$, the restriction functor induces a natural transformation 
\[ 
\begin{tikzcd}
\Res_*: \rHC^{i,j}(\rX_+)  \rar & \rH^{i}(\ParRes{\rX}_+).
\end{tikzcd}
\]
To compare the cohomology of $\rX^{\Ctwo}$ with the $\RO(\Ctwo)$-graded cohomology of $\rX$, we make use of the splitting \eqref{splittingHF}
to define the natural ring map
\[ 
\begin{tikzcd}
\modPhi: \rHC^{i,j}(\rX_+) \rar & \rH^{i-j}(\rX^{\Ctwo}_+),
\end{tikzcd}
\]
which sends $u \in \rHC^{i,j}(\rX_+)$ to the composite (as defined in \cite{BW}*{(2.7)})
\[ 
\begin{tikzcd}
  \Sigma^{\infty}\rX^{\Ctwo}  \ar[rr,"\GFix{u}"] && \Sigma^{i-j}\GFix{ \rH\uF} \ar[r, two heads, "\uppi_{\F}^{(0)}"] &\Sigma^{i-j} \rH\F .
\end{tikzcd}
\]

The purpose of this section is to compare the $\RO(\Ctwo)$-graded squaring operations with the classical squaring operations along the maps $\Res_*$ and $\modPhi$. We begin with a brief recollection of the construction of the classical and $\Ctwo$-equivariant squaring operations. 

\subsection{Steenrod's construction of squaring operations}

 The construction of the classical mod $2$ Steenrod algebra, which is the algebra of stable cohomology operations for ordinary cohomology with $\F$-coefficients,  involves  the $\mathbb{E}_\infty$-structure\footnote{Technically, we only make use of the  $\mathbb{H}_\infty$-ring structure that underlies the $\bE_\infty$-structure of $\rH\F$} of $\rH\F$  and the fact that the tautological line bundle $\upgamma$ over $\RP^\infty$ is $\rH\F$-orientable. 
 We review here how the mod 2 Steenrod operations are derived from that structure. A similar discussion can be found in \cite{Hinf}*{Section~VIII.2}.

 \begin{notn}
 For any space or spectrum $\rX$ and $n\geq 1$, we let \[ \mr{D}_n(\rX) := (\mr{E}\Sigma_n)_+ \sma_{\Sigma_n} (\rX^{\sma n}),\]
 where $\Sigma_n$ acts by permuting the factors of $\rX^{\sma n}$. By convention, $\mr{D}_0(\rX)= \bS$.
 \end{notn}
 An $\mathbb{E}_\infty$-ring structure on a spectrum $\mr{R}$ is a collection of maps   of the form 
 \[ 
\begin{tikzcd}
\Theta_n^{\mr{R}} : \mr{D}_n(\mr{R}) \rar & \mr{R}
 \end{tikzcd}
 \]
 for each $n \geq 0$, which satisfy the usual coherence conditions (see \cite{GeomIter}). By assumption, $\Theta_0^{\mr{R}}$ is the unit map of $\mr{R}$ and $\Theta_1^{\mr{R}}$ is the identity map.

  The $\rH\F$-orientibility of $\upgamma$ implies the existence of an $\rH\F$-Thom class 
 \begin{equation}
 \begin{tikzcd}
\uu_n: \mr{Th}(\upgamma^{\oplus n}) \simeq \RP_n^\infty \rar & \Sigma^n\rH\F
\end{tikzcd}
 \end{equation}
for each $n \geq 0$. These are compatible as $n$ varies, in the sense that the following diagram commutes:
\begin{equation} \label{thomcompat}
 \begin{tikzcd}
 \mr{Th}(\upgamma^{\oplus (m+n)}) \rar \ar[rrd, bend right = 10, "\uu_{m+n}"'] & \mr{Th}(\upgamma^{\oplus m}) \sma \mr{Th}(\upgamma^{\oplus n}) \rar["\uu_m \sma \uu_n"] & \Sigma^m\rH\F \sma \Sigma^n\rH\F \dar["\upmu_{\F}"]  \\
&& \rH\F.
\end{tikzcd}
\end{equation}

For any spectra $\mr{E}$ and $\mr{F}$, there is a natural map 
\[ 
\begin{tikzcd}
\updelta_n: \mr{D}_n(\mr{E} \sma \mr{F}) \rar[""] & \mr{D}_n(\mr{E}) \sma \mr{D}_n(\mr{F})
\end{tikzcd}
\]
induced by the diagonal on $\mr{E}\Sigma_n$ and the  isomorphism $(\mr{E}\sma \mr{F})^{\sma n} \iso \mr{E}^{\sma n} \sma \mr{F}^{\sma n}$.
Thus, we may define the map $\uptau_n$ as the composition  
\begin{equation}
 \begin{tikzcd}[column sep={37}]
 \mr{D}_2(\Sigma^n\rH\F) \ar[d,"\updelta_2"]  \ar[rr,"\uptau_n"] &&\Sigma^{2n} \rH\F \\
 \mr{D}_2(\mr{S}^n) \sma \mr{D}_2(\rH\F) \rar["\simeq"]   & \Sigma^n\RP_n^\infty \sma \mr{D}_2(\rH\F)\ar[r," \Sigma^n\uu_n \sma \Theta_2^{\F}"] & \Sigma^{2n} \rH\F \sma \rH\F. \uar["\upmu_{\F}"]
\end{tikzcd} 
 \end{equation} 
 
  \begin{defn}
  The power operation  is a natural  transformation
 \[ 
\begin{tikzcd}
\Pow_{2} \colon \rH^{n}(-) \rar & \rH^{2n }(\mr{D}_2(-)), 
\end{tikzcd}
\]
which takes a class $u \in \mr{H}^{n}( \mr{E} )$ to the composite class
\[ 
\begin{tikzcd}
\Pow_{2}(u)\colon \mr{D}_2(\mr{E}) \rar["\mr{D}_2(u)"]  & \mr{D}_2(\Sigma^{n}\rH\F) \rar["{\uptau}_n"] & \Sigma^{n} \rH\F
\end{tikzcd}
\]
for any $\mr{E} \in \Sp$.  
\end{defn}

From  \eqref{thomcompat}, we deduce the commutativity  of the diagram
 \begin{equation} \label{cartan}
 \begin{tikzcd}
\mr{D}_2(\Sigma^n \rH\F \sma \Sigma^m \rH\F)\rar \dar["\mr{D}_2(\upmu_{\F})"] &\mr{D}_2(\Sigma^{n}\rH\F)  \sma  \mr{D}_2(\Sigma^{m}\rH\F) \rar["\uptau_{n} \sma \uptau_{m}"]  & \Sigma^{2n}\rH\F \sma \Sigma^{2m}\rH\F \dar["\upmu_{\F}"]  \\
 \mr{D}_2(\Sigma^{n+m}\rH\F) \ar[rr,"\uptau_{n+m}"'] &&\Sigma^{2n+2m} \rH\F. 
 \end{tikzcd}
 \end{equation}
 As a result, we have 
\[ \updelta_2^*(\Pow_2(u) \otimes \Pow_2(v)) = \Pow_{2}(u \otimes  v) \]
which leads to the  Cartan formula for the Steenrod algebra. 

If $\rX \in \Top_*$ is given the trivial $\Sigma_2$-action and $\rX \sma \rX$  the permutation action, the  diagonal map $ \rX \to \rX \sma \rX$  is $\Sigma_2$-equivariant. Consequently, we have an induced map 
\[ 
\begin{tikzcd}
\Delta_{\mr{X}}: (\mr{B}\Sigma_2)_+ \sma \rX  \simeq (\mr{E}\Sigma_2)_+ \sma_{\Sigma_2} \rX \rar[] & \mr{D}_2(\rX).
\end{tikzcd}
\]
Since $\rH^*(\mr{B}\Sigma_2)\cong \F[\ttt]$, we may write (using the Kunneth isomorphism)
 \begin{equation} \label{formula:classic}
  \Delta_{\mr{X}}^*(\Pow_2(u)) = \sum_{i = 0}^{n} \ttt^{n-i} \otimes \Sq^i(u),
  \end{equation}
  which defines the natural transformations $\Sq^i\colon \rH^n(-) \rtarr \rH^{n+i}(-)$.

  \begin{rmk} \label{rmk:suspSq} The squaring operation $\Sq^i(u)$ for any class $u \in \rH^n(\rX)$  is determined by  $\Sq^i(\iota_n)$, where $\iota_n \in \rH^*(\mr{K}(\F, n))$ is the fundamental class, because of the universal property of $\mr{K}(\F, n)$. A priori, $\Sq^i(u)$ depends on the cohomological degree of $u$. However, this dependence is eradicated by the fact that the squaring operations are stable, i.e.  for any $u \in \rH^*(\rX)$
  \[ \Sq^i(\sigma_*(u)) = \sigma_*(\Sq^i(u)), \]
  where $\sigma_*: \rH^{*}(\rX) \cong \rH^{\ast +1}(\Sigma \rX)$ is the suspension isomorphism.  The  $\rH\F$-orientibility of  $\upgamma$ implies $\Sq^0(\iota) =\iota$ for the generator $\iota \in \rH^1(\mr{S}^1)$, which, along with Cartan formula, implies stability.
  \end{rmk}
  
\subsection{ The $\Ctwo$-equivariant squaring operations}
The construction of the classical squaring operations can be adapted to construct  squaring operations on the $\RO(\Ctwo)$-graded cohomology of a $\Ctwo$-space. 

\begin{rmk} Our ideas are closely related to the construction of the $\R$-motivic squaring operations  due to Voevodsky \cite{V}. Certain parts, such as the construction of the power operation \autoref{defn:power}, though different, can be compared to \cites{W1,W2},  where the author studies $\Ctwo$-equivariant power operations on the homology of spaces.  
 \end{rmk}
 
\begin{notn} For any  group $\mr{G}$ and a family of subgroups $\mathcal{F}$ closed under subconjugacy, there exists a space $\mr{E}\mathcal{F}$ 
determined up to a $\mr{G}$-weak equivalence by its universal property 
\[ 
 \mr{E} \mathcal{F}^{\mr{H}} \simeq \left\lbrace \begin{array}{ccccc}
 \ast & \text{if $\mr{H} \in \mathcal{F},$} \\
 \emptyset & \text{otherwise.}
  \end{array} \right.
 \]
When $\mr{G} = \Ctwo \times \Sigma_n$ and $\mathcal{F}_n = \{ \rH \subset \mr{G}: \rH \cap \Sigma_n = \mathbbm{1} \}$, we denote $\mr{E}\mathcal{F}_n$ by $\mr{E}_{\Ctwo}\Sigma_n$.
Note that there is a natural $\Ctwo$-equivariant map
 $ \mr{E}\Sigma_n \rtarr \mr{E}_\Ctwo \Sigma_n$.
\end{notn}

\begin{notn}
 For a based $\Ctwo$-space or a $\Ctwo$-spectrum $\rX$, we let  \[\mr{D}_n^{\Ctwo}(\rX) := (\mr{E}_{\Ctwo}\Sigma_n)_+ \sma_{\Sigma_n} (\rX^{ \sma n}) \]
 the $n$-th equivariant extended power construction on $\rX$.  There is a natural $\Ctwo$-equivariant map
 \[ 
 \begin{tikzcd}
 \updelta_n^{\Ctwo} : \mr{D}_n^{\Ctwo}(\rX \sma \mr{Y}) \rar &  \mr{D}_n^{\Ctwo}(\rX) \sma \mr{D}_n^{\Ctwo}(\mr{Y}) 
 \end{tikzcd}
  \]
  induced by the diagonal map of $\mr{E}_{\Ctwo}\Sigma_n$ for any pair $\rX$ and $\mr{Y}$ of $\Ctwo$ space or spectra.  
\end{notn}

For a $\Ctwo$-equivariant space $\rX \in \Top_*^{\Ctwo}$, the inclusions $\rX^\Ctwo \hookrightarrow \rX$ and $\mr{E}\Sigma_n \rtarr \mr{E}_\Ctwo \Sigma_n$ together induce a natural map  
\begin{equation} 
\begin{tikzcd}
\uplambda_{\rX}: \mr{D}_2(\Fix{\rX}) \rar & \Fix{\mr{D}_2^{\Ctwo}(\rX)}
\end{tikzcd}
\end{equation}
which is usually not an equivalence.

\begin{eg} \label{maplambdasphere}
When $\rX \simeq \mr{S}^0$, 
$
\begin{tikzcd}
\uplambda_{\mr{S}^0}:(\mr{B}\Sigma_2)_+ \rar & \Fix{(\mr{B}_{\Ctwo}\Sigma_2)} \simeq \mr{B}\Sigma_2 \sma \mr{S}^0_+
\end{tikzcd}
$
is the inclusion of a summand. 
\end{eg}

 Likewise, when  $\mr{E}\in \Sp^{\Ctwo}$, the map $\mr{E}^{\Ctwo} \hookrightarrow \mr{E}$  induces a natural map 
\[ 
\begin{tikzcd}
\uplambda_{\mr{E}}: \mr{D}_2(\Fix{\mr{E}}) \rar & \Fix{\mr{D}_2^{\Ctwo}(\mr{E})}.
\end{tikzcd}
\]
 Using the fact that $\widetilde{\mr{E}\Ctwo}$ is an $\bE_{\infty}$-ring $\Ctwo$-spectrum  
 we define a map $\uplambda^{\Phi}_{\mr{E}}$ as the composition
\begin{equation}
\begin{tikzcd}
\mr{D}_2 (\GFix{\mr{E}}) \ar[rr,"\uplambda^{\Phi}_{\mr{E}}"] \dar["\lambda_{\widetilde{\mr{E}\Ctwo}\sma \mr{E}}"] & & \GFix{\mr{D}_2^{\Ctwo}(\mr{E}) } \\
(\mr{D}_2(\widetilde{\mr{E}\Ctwo} \sma \mr{E}))^{\Ctwo} \rar
& \CFix{ (\mr{D}_2(\widetilde{\mr{E}\Ctwo})\sma \mr{D}_2(\mr{E}))}  \rar  &\CFix{( \widetilde{\mr{E}\Ctwo} \sma  \mr{D}_2 ( \mr{E}))} \cong \GFix{\mr{D}_2(\mr{E})} \uar \\
\end{tikzcd}
\end{equation} 
 By definition, an $\mathbb{E}_\infty^{\Ctwo}$-ring structure on a spectrum $\mr{R}$ consists of  a system of maps 
 \[ 
 \begin{tikzcd}
 \Theta_{n}^{\mr{R}}: \mr{D}_n^{\Ctwo}(\mr{R}) \rar & \mr{R}
 \end{tikzcd}
 \]
 for each $n \geq 0$, which satisfy certain compatibility criteria \cite{LMS}*{\S{VII.2}}.
 The categorical fixed-point spectrum $\CFix{\mr{R}}$ as well as the geometric-fixed point spectrum $\GFix{\mr{R}}$ of an $\mathbb{E}_\infty^{\Ctwo}$-ring spectrum $\mr{R}$ are $\mathbb{E}_\infty$-ring spectra with structure maps  
 \[ 
  \begin{tikzcd}
 \Theta_{n}^{\CFix{\mr{R}}}: \mr{D}_2(\CFix{\mr{R}}) \rar["\lambda_\mr{R}"] & \CFix{\mr{D}_2^{\Ctwo}(\mr{R})} \ar[rr, "\CFix{(\Theta_{n}^{\mr{R}})}"] && \CFix{\mr{R}}
 \end{tikzcd}
 \]
 and
 \[ 
  \begin{tikzcd}
 \Theta_{n}^{\GFix{\mr{R}}}: \mr{D}_2(\GFix{\mr{R}}) \rar["\lambda_\mr{R}^\Phi"] & \GFix{\mr{D}_2^{\Ctwo}(\mr{R})} \ar[rr, "\GFix{\Theta_{n}^{\mr{R}}}"] && \GFix{\mr{R}},
 \end{tikzcd}
 \]
 respectively. Further, the natural map 
 \[ 
 \begin{tikzcd}
 \upiota_{\mr{R}} : \CFix{\mr{R}} \rar & \GFix{\mr{R}}
 \end{tikzcd}
 \]
is an  $\bE_\infty$-ring map.

 Let  $\upomega$ denote the sign representation of $\Sigma_2$.  The equivariant Eilenberg-Mac~Lane spectrum $\rH\uF$ does not distinguish between the $\Ctwo$-equivariant bundles 
\[ 
 \begin{tikzcd}
 \overline{\upepsilon}:  \mr{E}_{\Ctwo}\Sigma_2 \times_{\Sigma_2} (\uprho ) \rar & \mr{B}_{\Ctwo} \Sigma_2
 \end{tikzcd}
 \]
 \[ 
 \begin{tikzcd}
 \overline{\upgamma}:  \mr{E}_{\Ctwo}\Sigma_2 \times_{\Sigma_2} (\uprho \otimes \upomega) \rar & \mr{B}_{\Ctwo} \Sigma_2,
 \end{tikzcd}
 \]
i.e. there exists a $\Ctwo$-equivariant Thom isomorphism 
\[ \mr{Th}(\overline{\upgamma}) \sma \rH\uF \simeq  \mr{Th}(\overline{\upepsilon}) \sma \rH\uF \simeq \Sigma^{\uprho} (\mr{B}_{\Ctwo} \Sigma_2)_+ \sma   \rH\uF.   \]
The above Thom isomorphism results in an $\rH\uF$-Thom class  
\[
 \begin{tikzcd} 
 \underline{\uu}_n: \mr{Th}(\overline{\upgamma}^{\oplus n}) \rar & \Sigma^{n\rho} \rH\uF 
 \end{tikzcd}
   \]
   for each $n \geq 0$, and these Thom classes can be used to define the $\Ctwo$-equivariant power operations. Since \[ \mr{D}_2^{\Ctwo}(\mr{S}^{n \uprho}) \simeq \mr{Th}(n \uprho \oplus n (\uprho \otimes \upomega)) \simeq \Sigma^{n\uprho} \mr{Th}(\overline{\upgamma}^{\oplus n}),\] we define the map $\underline{\uptau}_n$ as the composition
\begin{equation}
 \begin{tikzcd}[column sep={37}]
 \mr{D}_2(\Sigma^{n\rho}\rH\uF) \ar[d,]  \ar[rr,"\underline{\uptau}_n"] &&\Sigma^{2n\rho} \rH\uF \\
 \mr{D}_2^\Ctwo(\mr{S}^{n\rho}) \sma \mr{D}_2^\Ctwo(\rH\uF) \rar["\simeq"]   &  \Sigma^{n\uprho} \mr{Th}(\overline{\upgamma}^{\oplus n}) \sma \mr{D}_2^\Ctwo(\rH\uF)\ar[r," \Sigma^n\underline{\uu}_n \sma \Theta_2^{\uF}"] & \Sigma^{2n\rho} \rH\uF \sma \rH\uF. \uar["\upmu_{\F}"]
\end{tikzcd} 
 \end{equation}

 \begin{defn} \label{defn:power} 
 The equivariant power operation  is a natural  transformation
 \[ 
\begin{tikzcd}
\Pow_{n\uprho}: \rHC^{n \uprho}(-) \rar & \rHC^{2n \uprho}(\mr{D}_2^{\Ctwo}(-)), 
\end{tikzcd}
\]
which takes a class $u \in \rHC^{n \uprho}( \rE )$ to the composite class
\[ 
\begin{tikzcd}
\Pow_{n \uprho}(u): \mr{D}_2^{\Ctwo}(\rE) \rar["\mr{D}_2^{\Ctwo}(u)"]  & \mr{D}_2^{\Ctwo}(\Sigma^{n\uprho}\rH\uF) \rar["\underline{\uptau}_n"] & \Sigma^{2n\uprho} \rH\uF
\end{tikzcd}
\]
for any $\rE \in \Sp^{\Ctwo}$. 
\end{defn}

When $\rX \in \Top_*^\Ctwo$ is given the trivial $\Sigma_2$-action and $\rX \sma \rX$ is given the permutation action, the  diagonal map $ \rX \to \rX \sma \rX$  is a $\Ctwo \times \Sigma_2$-equivariant map. Consequently, we have a $\Ctwo$-equivariant map 
\[ 
\begin{tikzcd}
\Delta^{\!\Ctwo}_{\rX}: (\mr{B}_{\Ctwo}\Sigma_2)_+ \sma \rX  \simeq (\mr{E}_{\Ctwo}\Sigma_2)_+ \sma_{\Sigma_2} \rX \rar[] & \mr{D}_2^{\Ctwo}(\rX).
\end{tikzcd}
\]
By \cite{HK}*{Lemma~6.27} (also see \cite{W1}*{Proposition~3.2}), 
\[ \rHC^{\star}((\mr{B}_{\Ctwo}\Sigma_2)_+)\cong \bMC[\yy,\xx]/(\yy^2 = a_\upsigma \yy + u_{\upsigma} \xx),\] where $|\yy| = (1,1)$ and $|\xx|= (2,1)$. 
Since $\rHC^{\star}((\mr{B}_{\Ctwo}\Sigma_2)_+)$ is $\bMC$-free, we also have a Kunneth isomorphism 
\[ \rHC^{\star}((\mr{B}_{\Ctwo}\Sigma_2)_+ \sma \rX) \cong \rHC^{\star}((\mr{B}_{\Ctwo}\Sigma_2)_+) \otimes_{\bMC} \rHC^{\star}\!(\rX).\]
 Thus, for any $u \in \rHC^{n \uprho}(\rX)$, we may write $(\Delta^{\!\Ctwo}_{\rX})^*(\mathcal{P}_{n \uprho}(u))$ using the formula
 \begin{equation} \label{formula:equiv}
  (\Delta^{\!\Ctwo}_{\rX})^*(\mathcal{P}_{n \uprho}(u)) = \sum_{i = 0}^{n} \xx^{n-i} \otimes \eSq^{2i}(u) + \sum_{i = 0}^{n}\yy \xx^{n-i-1} \otimes \eSq^{2i+1}(u), 
  \end{equation}
   which defines the equivariant squaring operations $\eSq^{i}$  for all $i \geq 0$. These can be extended to operations on the entire $\RO(\Ctwo)$-graded cohomology ring as in \cite{V}*{Prop~2.6}).
\begin{rmk}   Just like  the classical case, one can  easily deduce that the $\RO(\Ctwo)$-graded squaring operations defined this way are natural, stable  and obey the Cartan formula. In fact, Voevodsky \cite{V}  uses a similar approach to establish these  properties for the $\R$-motivic Steenrod algebra, which can be emulated in the $\Ctwo$-equivariant case using the Betti realization functor.
\end{rmk}
   
\subsection{Comparison theorems} 
 Since the restriction functor is  monoidal, it induces a ring map 
\[ 
\begin{tikzcd}
\Res_*: \rHC^{\star}\!(\rX_+) \rar & \rH^{\ast}(\ParRes{\rX}_+)
\end{tikzcd}
\]
for any $\rX \in \Top^{\Ctwo}_*$. 

\begin{eg}\label{eg:ResHF}
 When $\rX = \ast$, 
the map 
\[ 
\begin{tikzcd}
\Res_*:  \pi_{\star}^{\Ctwo}\rH\uF  \rar & \pi_* \rH\F \cong \F
\end{tikzcd}
\]
sends  $u_\upsigma \mapsto 1$, $a_\upsigma \mapsto 0$, and $\Theta \mapsto 0$. This  follows from the fact that the cofiber sequence $\Ctwo_+ \rtarr S^0 \xrtarr{a_\upsigma}S^\upsigma$ shows that the kernel of $\Res_*$ consists of precisely the $a_\upsigma$-divisible elements.
\end{eg}

\begin{prop} For  any $\rX \in \Top_*^{\Ctwo}$ and a class $u \in \rHC^{n \uprho}(\rX)$ 
\[\Res_*(\mathcal{P}_{n \uprho}(u)) = \mathcal{P}_{2n}( \Res_*(u)).\]
\end{prop}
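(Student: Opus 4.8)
The plan is to trace the definition of the power operation $\Pow_{n\uprho}$ through the restriction functor and match it term‑by‑term with the classical power operation $\Pow_{2n}$. The key point is that all of the ingredients in $\underline{\uptau}_n$ — the equivariant extended power $\mr{D}_2^{\Ctwo}(-)$, the identification $\mr{D}_2^{\Ctwo}(\mr{S}^{n\uprho}) \simeq \Sigma^{n\uprho}\mr{Th}(\overline{\upgamma}^{\oplus n})$, the Thom class $\underline{\uu}_n$, and the $\bE_\infty^{\Ctwo}$ structure map $\Theta_2^{\uF}$ — restrict to the corresponding classical ingredients. First I would record that $\Res$ is a symmetric monoidal functor, so $\Res(\mr{D}_2^{\Ctwo}(\rX)) \simeq \mr{D}_2(\Res\, \rX)$ (using that $\Res(\mr{E}_{\Ctwo}\Sigma_2) \simeq \mr{E}\Sigma_2$ up to a $\Sigma_2$‑equivariant equivalence, since the family $\mathcal{F}_2$ restricts to the family containing only the trivial subgroup of $\Sigma_2$), and $\Res(\Theta_2^{\uF})$ is the classical $\Theta_2^{\F}$ because $\Res\,\rH\uF \simeq \rH\F$ as $\bE_\infty$‑rings. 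Similarly, $\Res(\Sigma^{n\uprho}\rH\uF) \simeq \Sigma^{2n}\rH\F$ since $\uprho$ restricts to a $2$‑dimensional trivial representation.

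Second, I would check that $\Res_*$ sends the equivariant Thom class $\underline{\uu}_n$ to the classical Thom class $\uu_n$. The equivariant Thom isomorphism $\mr{Th}(\overline{\upgamma}^{\oplus n}) \sma \rH\uF \simeq \Sigma^{n\uprho}(\mr{B}_{\Ctwo}\Sigma_2)_+ \sma \rH\uF$ restricts to the statement that $\upgamma$ over $\mr{B}\Sigma_2 = \mr{B}_{\Ctwo}\Sigma_2$‑underlying is $\rH\F$‑orientable with the expected Thom class; one should verify the restricted Thom class is the standard one (this is forced since it is the unique $\rH\F$‑orientation compatible with the zero section, or by inspecting degrees). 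Granting this, $\Res_*(\underline{\uptau}_n) = \uptau_{2n}$ after identifying all the suspensions and using that $\Res$ commutes with smash products and $\upmu_{\F}$. The naturality of $\Res_*$ in the spectrum variable then gives, for $u \in \rHC^{n\uprho}(\rE)$, that $\Res_*(\Pow_{n\uprho}(u)) = \Res_*(\underline{\uptau}_n \circ \mr{D}_2^{\Ctwo}(u)) = \uptau_{2n} \circ \mr{D}_2(\Res_*(u)) = \Pow_{2n}(\Res_*(u))$, where on the left $\Res_*(u) \in \rH^{2n}(\Res\,\rE)$ via the identification $\rHC^{n\uprho} = \rHC^{2n,n}$ and $\Res_*\colon \rHC^{n,m} \to \rH^{n}$.

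The main obstacle I anticipate is bookkeeping around the two ``extended power'' constructions: the equivariant $\mr{D}_2^{\Ctwo}$ uses $\mr{E}_{\Ctwo}\Sigma_2 = \mr{E}\mathcal{F}_2$, while the map $\underline{\uptau}_n$ is phrased using the \emph{non}‑equivariant $\mr{D}_2$ of an equivariant spectrum (i.e. $(\mr{E}\Sigma_2)_+ \sma_{\Sigma_2}(-)$ internal to $\Sp^{\Ctwo}$) composed with the natural transformation $\mr{E}\Sigma_2 \to \mr{E}_{\Ctwo}\Sigma_2$. One must be careful that restriction of the equivariant $\mr{D}_2^{\Ctwo}$ agrees with the classical $\mr{D}_2$: since $\Res(\mr{E}_{\Ctwo}\Sigma_2)$ is a free contractible $\Sigma_2$‑space (the underlying space of $\mr{E}\mathcal{F}_2$ has $\Sigma_2$ acting freely because $\mathcal{F}_2$ contains only subgroups meeting $\Sigma_2$ trivially), it is $\Sigma_2$‑equivalent to $\mr{E}\Sigma_2$, which gives the needed identification. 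I would isolate this as a short lemma. The remaining steps — compatibility of Thom classes, monoidality of $\Res$, naturality — are formal once this identification is in place, so the write‑up should foreground the $\mr{E}_{\Ctwo}\Sigma_2$ versus $\mr{E}\Sigma_2$ comparison and then conclude by diagram chase.
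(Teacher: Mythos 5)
Your proposal follows essentially the same route as the paper: the paper's proof is exactly the two observations that $\Res(\overline{\upgamma}) = 2\upgamma$, hence $\Res_*(\underline{\uu}_n) = \uu_{2n}$, and that $\Res(\Theta_2^{\uF}) = \Theta_2^{\F}$, which together give $\Res(\underline{\uptau}_n) = \uptau_{2n}$; naturality then finishes the argument, and your extra care about $\mr{E}_{\Ctwo}\Sigma_2$ versus $\mr{E}\Sigma_2$ (the underlying $\Sigma_2$-space is free and contractible) is a reasonable way to justify $\Res\,\mr{D}_2^{\Ctwo} \simeq \mr{D}_2\,\Res$, which the paper uses implicitly.

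One correction, though: your claim that $\Res_*$ sends $\underline{\uu}_n$ to $\uu_n$ is mis-indexed and, as stated, does not typecheck. Since $\uprho\otimes\upomega$ has underlying $\Sigma_2$-representation $\upomega\oplus\upomega$, the underlying bundle of $\overline{\upgamma}$ is $2\upgamma$, so $\mr{Th}(\overline{\upgamma}^{\oplus n})$ restricts to $\mr{Th}(\upgamma^{\oplus 2n})$ and the restricted Thom class is $\uu_{2n}$ (a class of degree $2n$), not $\uu_n$; your appeal to ``inspecting degrees'' would in fact rule out $\uu_n$. This is precisely the input needed for your subsequent (correct) assertion $\Res_*(\underline{\uptau}_n) = \uptau_{2n}$, since $\uptau_{2n}$ is built from $\uu_{2n}$; with that fix the rest of your diagram chase goes through as in the paper.
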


\begin{proof} Since, $\Res(\overline{\upgamma}) = 2 \upgamma$, it follows that  $\Res_*(\underline{\uu}_n) = \uu_{2n}$. This, along with the fact that $\ParRes{\Theta_2^{\uF}} =\Theta_2^{\F}$ shows $\Res(\underline{\uptau}_n) = \uptau_{2n}$, and the result follows. 
\end{proof}

\begin{proof}[Proof of \autoref{thm:squareunder}]  
Let $\rX \in \Top_*^{\Ctwo}$ and $u \in \rHC^{n \uprho}(\rX)$. Since $\ParRes{\mr{B}_{\Ctwo}\Sigma_2} \simeq \mr{B}\Sigma_2$, $\ParRes{\Delta^{\Ctwo}} = \Delta$, 
$\Res_*(\yy) = \ttt$ and $\Res_*(\xx) = \ttt^2$, it follows that 
\begin{eqnarray*} 
\Delta_{\ParRes{\rX}}^*( \mathcal{P}_{2n}(\Res_*(u))) &=&\sum_{i = -n}^{n} \ttt^{n-i} \otimes \Sq^i(\Res_*(u)) \\
\end{eqnarray*}
must equal 
\begin{eqnarray*} 
{\Res_*( (\Delta_\rX^\Ctwo)^*(\mathcal{P}_{n \uprho }(u)) )}&=&  \Res_*(\sum_{i = -n}^{n} \xx^{n-i} \otimes \eSq^{2i}(u) + \sum_{i = -n}^{n}\yy \xx^{n-i-1} \otimes \eSq^{2i+1}(u)  ) \\
&=& \sum_{i = -n}^{n} \ttt^{2n-2i} \otimes\Res_*(\eSq^{2i}(u))  \\
&& \hspace{80pt} + \sum_{i = -n}^{n}\ttt^{2n-2i-1} \otimes \Res_*(\eSq^{2i+1}(u)).  
\end{eqnarray*}
Thus, the result is true for cohomology classes $u \in \rHC^{n \uprho}(\rX)$ for any space $\rX \in \Top_*^{\Ctwo}$. 

Since the squaring operations are stable, the result extends to arbitrary $\RO(\Ctwo)$-graded cohomology classes. Moreover, since $\Re$ commutes with suspensions, in the sense that
 $\Res\circ {\Sigma_{\Ctwo}^{\infty}} \simeq \Sigma^{\infty}\circ \Res$,
and any $\mr{E}\in \SpfinC$ is equivalent to $\Sigma^{-n} \Sigma^\infty_\Ctwo \rX$ for some $n$ and $\rX\in \Top_*^\Ctwo$, we conclude the same for any $u \in \rHC^{\star}\!(\mr{E})$. 
 \end{proof}

Now we draw our attention towards comparing the action of the $\Ctwo$-equivariant Steenrod algebra $\cA^\Ctwo$ on $\rH^{\star}(\rX_+)$ to the action of  the classical Steenrod algebra $\cA$ on $\rH^{*}(\Fix{\rX}_+)$, where $\rX \in \Top_*^{\Ctwo}$. Note that 
\[ 
\begin{tikzcd}
\modPhi:\rHC^\star\! (\rX_+) \rar & \rH^{*}(\Fix{\rX}_+) 
\end{tikzcd}
\]
is a ring map. 

\begin{eg}\label{eg:FixHF} 
When $\rX= \ast$, the map 
\[ 
\begin{tikzcd}
\modPhi \colon \pi_{\star}^{\Ctwo}\rH\uF \cong 
 \F[u_{\upsigma}, a_{\upsigma}] \oplus \Theta\{ u_{\upsigma}^{-i} a_{\upsigma}^{-j}\}
\rar & \pi_* \rH\F \cong \F
\end{tikzcd}
\]
sends $a_\upsigma \mapsto 1$, $u_\upsigma \mapsto 0$, and $\Theta\mapsto 0$. This is essentially because smashing with 
\[ \widetilde{\mr{E} \Ctwo} \simeq \mr{colim} \{ \mr{S}^0 \overset{a_{\upsigma}}\longrightarrow\mr{S}^{\upsigma} \overset{a_{\upsigma}}\longrightarrow \mr{S}^{2\upsigma}  \longrightarrow \dots \} \]
amounts to inverting $a_{\upsigma}$ and the projection $\uppi_{\F}^{(0)}$ kills $u_{\upsigma}$. 
\end{eg}

\begin{rmk} \label{rmk:Phiproj} One can deduce from \autoref{maplambdasphere} that in cohomology, the map 
\[ 
\begin{tikzcd}
\uplambda_{\mr{S}^0}^*: \rH^*(\Fix{\mr{B}_{\Ctwo}\Sigma_2}_+ ) \cong \F[\ttt][\iota]/(\iota^2 -\iota)  \rar[two heads] & \rH^*((\mr{B}\Sigma_2)_+) \cong \F[\ttt],
\end{tikzcd}
\] 
is the quotient map sending $\iota \mapsto 0$. 
\end{rmk}

\begin{eg} The map $\modPhi \colon \rHC^\star((\mr{B}_{\Ctwo} \Sigma_2)_+) \to \rH^{\ast}(\Fix{ \mr{B}_{\Ctwo} \Sigma_2}_+)$ sends 
$\xx \mapsto \ttt$ and $\yy \to \iota$, $a_{\upsigma} \mapsto 1$ and $u_\upsigma \mapsto 0$.
\end{eg}

\begin{lem}  \label{lem:eulercompare} The composition
 \[
 \begin{tikzcd}
   \rHC^{\star}\!(\mr{Th}(\overline{\upgamma}^{\oplus n}))  \rar["\modPhi"] & \rH^{\ast}(\Fix{\mr{Th}(\overline{\upgamma}^{\oplus n})}) \rar["\uplambda_{\mr{S}^{\uprho \otimes \upomega}}"] & \rH^*(\mr{Th}(\upgamma^{\oplus n}))
   \end{tikzcd}
   \]
sends $\underline{\uu}_n \mapsto \uu_n$.
\end{lem}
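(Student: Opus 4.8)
The plan is to combine naturality of $\modPhi$ with an explicit description of $\Fix{\mr{Th}(\overline{\upgamma}^{\oplus n})}$ and of the map $\uplambda_{\mr{S}^{\uprho\otimes\upomega}}$, and then to detect the answer on a single fiber. Since the $\Ctwo$-fixed points of the Thom space of an equivariant vector bundle form the Thom space of the subbundle of fixed vectors, $\Fix{\mr{Th}(\overline{\upgamma}^{\oplus n})} \simeq \mr{Th}\big((\overline{\upgamma}^{\oplus n})^{\Ctwo}\big)$, a Thom space over $\Fix{\mr{B}_{\Ctwo}\Sigma_2}$. By \autoref{rmk:Phiproj}, $\Fix{\mr{B}_{\Ctwo}\Sigma_2}$ is a disjoint union of two copies of $\mr{B}\Sigma_2$, and $\uplambda_{\mr{S}^0}$ of \autoref{maplambdasphere} is the inclusion of one of them; I would check, directly from the structure of the $\Ctwo\times\Sigma_2$-representation $\uprho\otimes\upomega$, that over this distinguished component the subspace of $\Ctwo$-fixed vectors of $\uprho\otimes\upomega$ is the sign representation $\upomega$ of the residual copy of $\Sigma_2$, so that $(\overline{\upgamma}^{\oplus n})^{\Ctwo}$ restricts there to $\upgamma^{\oplus n}$. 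Thus $\Fix{\mr{Th}(\overline{\upgamma}^{\oplus n})}$ is a wedge of two Thom spaces, one of which is $\mr{Th}(\upgamma^{\oplus n})$, and — after matching the $n$-fold suspension coming from $\uprho^{\Ctwo} = \upepsilon$ — the cohomology map $\uplambda_{\mr{S}^{\uprho\otimes\upomega}}$ is just the restriction to that wedge summand.

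Via the Thom isomorphism $\rH^*(\mr{Th}(\upgamma^{\oplus n})) \cong \rH^*(\RP_n^\infty)$ is free over $\F[\ttt]$ on $\uu_n$; in particular $\rH^n(\mr{Th}(\upgamma^{\oplus n})) \cong \F$ is spanned by $\uu_n$, and $\uu_n$ restricts to the generator of $\rH^n(\mr{S}^n)$ along any fiber inclusion. So it is enough to prove that $\uplambda_{\mr{S}^{\uprho\otimes\upomega}}\big(\modPhi(\underline{\uu}_n)\big)$ restricts nontrivially along a fiber inclusion $\mr{S}^n \hookrightarrow \mr{Th}(\upgamma^{\oplus n})$, and this is where naturality of $\modPhi$ is used. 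Choosing a $\Ctwo$-fixed point $e$ of $\mr{E}_{\Ctwo}\Sigma_2$ (which exists since $\Ctwo \times \mathbbm{1} \in \mathcal{F}_2$), the fiber of $\overline{\upgamma}^{\oplus n}$ over $[e]$ is $n\uprho$ as a $\Ctwo$-representation, and the resulting $\Ctwo$-equivariant fiber inclusion $\iota\colon \mr{S}^{n\uprho}\hookrightarrow \mr{Th}(\overline{\upgamma}^{\oplus n})$ has, by \eqref{GFixsuspension}, geometric fixed points equal to the above fiber inclusion $\mr{S}^n \hookrightarrow \mr{Th}(\upgamma^{\oplus n})$ followed by the wedge-summand inclusion. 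Because $\underline{\uu}_n$ comes from the equivariant Thom isomorphism, $\iota^*(\underline{\uu}_n)$ is the generator of $\rHC^{n\uprho}(\mr{S}^{n\uprho}) \cong \pi_0^{\Ctwo}\rH\uF \cong \F$; applying naturality of $\modPhi$ along $\iota$, together with the fact that $\modPhi$ is a unital ring map compatible with the bigrading (compare \autoref{eg:FixHF}), we get that $\modPhi(\underline{\uu}_n)$ restricts along $\GFix{\iota}$ to the generator of $\rH^n(\mr{S}^n)$. Hence $\uplambda_{\mr{S}^{\uprho\otimes\upomega}}(\modPhi(\underline{\uu}_n))$ restricts to the generator of $\rH^n(\mr{S}^n)$, so it is nonzero and therefore equals $\uu_n$.

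The main obstacle is the fixed-point bookkeeping in the first paragraph: identifying $\Fix{\mr{Th}(\overline{\upgamma}^{\oplus n})}$ and $\Fix{\mr{B}_{\Ctwo}\Sigma_2}$, pinning down which of the two wedge summands $\uplambda_{\mr{S}^{\uprho\otimes\upomega}}$ hits and which fixed subbundle lies over it, and tracking the $\uprho^{\Ctwo}=\upepsilon$ suspension shifts consistently through $\uplambda$ and $\modPhi$. Once that framework is set up, the remaining inputs — that the equivariant Thom class restricts to a fundamental class on a fiber, and that $\modPhi$ carries fundamental classes to fundamental classes — are routine.
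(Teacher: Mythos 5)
Your argument is correct, but it detects the Thom class differently from the paper. The paper's proof pulls back along the zero sections: naturality of $\modPhi$ and $\uplambda$ gives a square comparing $\upzeta_{\Ctwo}^*$ and $\upzeta^*$, the equivariant Thom class goes to the Euler class $\xx^n$, the already-recorded computations ($\modPhi(\xx)=\ttt$, $\uplambda_{\mr{S}^0}^*$ as in \autoref{rmk:Phiproj}) send this to $\ttt^n=\upzeta^*(\uu_n)$, and injectivity of $\upzeta^*\colon \rH^*(\mr{Th}(\upgamma^{\oplus n}))\to\rH^*((\mr{B}\Sigma_2)_+)$ finishes the argument, with no need to identify $\Fix{\mr{Th}(\overline{\upgamma}^{\oplus n})}$ as a space. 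You instead restrict to a single fiber over a $\Ctwo$-fixed point of $\mr{E}_{\Ctwo}\Sigma_2$: the fiber inclusion $\mr{S}^{n\uprho}\to\mr{Th}(\overline{\upgamma}^{\oplus n})$ pulls $\underline{\uu}_n$ back to the fundamental class, $\modPhi$ takes fundamental classes of representation spheres to fundamental classes (it is a unital map compatible with $\GFix{\mr{S}^{n\uprho}}\simeq\mr{S}^n$), and since $\rH^n(\mr{Th}(\upgamma^{\oplus n}))\cong\F$ is generated by $\uu_n$ and detected on the bottom cell, nontriviality of the restriction forces the answer. The trade-off is that your route requires the space-level bookkeeping you flag in the first paragraph — identifying $\Fix{\mr{Th}(\overline{\upgamma}^{\oplus n})}$ as the wedge of Thom spaces of the fixed subbundles over the two components of $\Fix{\mr{B}_{\Ctwo}\Sigma_2}$, checking that $\uplambda$ hits the component over which the fixed subbundle is $\upgamma^{\oplus n}$, and that your chosen fixed point lies in that same component — whereas the paper's naturality-square argument sidesteps all of this; on the other hand, your detection needs only the rank-one group $\rH^n(\mr{Th}(\upgamma^{\oplus n}))$ and the unitality of $\modPhi$, not the Euler-class computation $\upzeta_{\Ctwo}^*(\underline{\uu}_n)=\xx^n$ or the ring structure of $\rHC^{\star}((\mr{B}_{\Ctwo}\Sigma_2)_+)$. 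One small caution: the ``$n$-fold suspension coming from $\uprho^{\Ctwo}=\upepsilon$'' you propose to track is not actually present here, since the bundles in the lemma have fibers $n(\uprho\otimes\upomega)$, whose $\Ctwo$-fixed part is $n\upomega$ with no trivial summand; that shift only appears when comparing with $\mr{D}_2^{\Ctwo}(\mr{S}^{n\uprho})\simeq\Sigma^{n\uprho}\mr{Th}(\overline{\upgamma}^{\oplus n})$, which your argument never needs.
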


\begin{proof} Let $\upzeta_{\Ctwo}: (\mr{B}_\Ctwo\Sigma_2)_+ \to \mr{Th}(\overline{\upgamma}^{\oplus n})$ denote the zero-section. Under the zero section map the Thom class is mapped to the Euler class, and therefore $\upzeta_{\Ctwo}^*(\underline{\uu}_n) = \xx^n$. Likewise, the zero-section for the nonequivariant bundle $\upzeta: (\mr{B}\Sigma_2)_+ \to \mr{Th}(\upgamma^{\oplus n})$ sends $\uu_n \mapsto \ttt^n.$ By naturality of $ \modPhi$ and $\uplambda$, we get a commutative diagram  
\[ 
\begin{tikzcd}
\rHC^{\star}\!(\mr{Th}(\overline{\upgamma}^{\oplus n})) \dar["\upzeta_{\Ctwo}^{*}"] \rar["\modPhi"] & {\rH^{*}(\Fix{\mr{Th}(\overline{\upgamma}^{\oplus n})})} \rar["\uplambda_{\mr{S}^{\uprho \otimes \upomega}}^*"] \dar["{(\Fix{\upzeta}_{\Ctwo})^{*}}"] & \rH^{*}(\mr{Th}(\upgamma^{\oplus n})) \dar["\upzeta^*"] \\
\rHC^{\star}\!((\mr{B}_{\Ctwo}\Sigma_2)_+) \rar["\modPhi"'] & \rH^{*}(\Fix{\mr{B}_{\Ctwo}\Sigma_2}_+) \rar["\uplambda_{\mr{S}^0}^*"'] & \rH^{*}((\mr{B}\Sigma_2)_+).
\end{tikzcd}
\]
which along with \autoref{rmk:Phiproj} and injectivity of $\upzeta^*$ implies the result.  
\end{proof}

\begin{cor} For any space $\rX \in \Top_*^{\Ctwo}$ and a class $ u \in \rHC^{n \uprho}(\rX)$, 
\begin{equation} \label{Powercompare}
\mathcal{P}_n(\modPhi(u)) = \uplambda^*_{\rX}(\modPhi(\mathcal{P}_{n \uprho}(u))).
\end{equation}
\end{cor}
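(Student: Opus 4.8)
The plan is to unwind both sides of \eqref{Powercompare} into composites of maps out of $\mr{D}_2(\Fix{\rX})$ and to match them ingredient by ingredient: the comparison of the Thom-class maps $\underline{\uptau}_n$ and $\uptau_n$ is exactly \autoref{lem:eulercompare}, and the comparison of the power-operation structure maps $\Theta_2^{\uF}$ and $\Theta_2^{\F}$ comes from the fact that $\uppi_{\F}^{(0)}$ is an $\bE_{\infty}$-ring map.

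Recall that $\modPhi(v)=\uppi_{\F}^{(0)}\circ\GFix{v}$ for any $\RO(\Ctwo)$-graded class $v$. For a $\Ctwo$-space $\rX$ we have $\GFix{\Sigma^{\infty}_{\Ctwo}\rX}\simeq\Sigma^{\infty}\Fix{\rX}$ and $\GFix{\Sigma^{\infty}_{\Ctwo}\mr{D}_2^{\Ctwo}(\rX)}\simeq\Sigma^{\infty}\Fix{\mr{D}_2^{\Ctwo}(\rX)}$, and under these identifications $\uplambda^{\Phi}_{\Sigma^{\infty}_{\Ctwo}\rX}$ agrees with $\Sigma^{\infty}\uplambda_{\rX}$ (on a suspension spectrum of a space the factor $\widetilde{\mr{E}\Ctwo}$ in the definition of $\uplambda^{\Phi}$ is inert, and both maps are then induced by $\mr{E}\Sigma_2\to\mr{E}_{\Ctwo}\Sigma_2$ together with the fixed-point inclusion). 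Thus $\modPhi(\mathcal{P}_{n\uprho}(u))=\uppi_{\F}^{(0)}\circ\GFix{\underline{\uptau}_n}\circ\GFix{\mr{D}_2^{\Ctwo}(u)}$; precomposing with $\uplambda_{\rX}$ and then applying naturality of the transformation $\uplambda^{\Phi}_{(-)}\colon\mr{D}_2(\GFix{-})\Rightarrow\GFix{\mr{D}_2^{\Ctwo}(-)}$ along $u\colon\Sigma^{\infty}_{\Ctwo}\rX\to\Sigma^{n\uprho}\rH\uF$, the right-hand side of \eqref{Powercompare} becomes
\[ \uppi_{\F}^{(0)}\circ\GFix{\underline{\uptau}_n}\circ\uplambda^{\Phi}_{\Sigma^{n\uprho}\rH\uF}\circ\mr{D}_2(\GFix{u}). \]
On the other hand, $\mathcal{P}_n(\modPhi(u))=\uptau_n\circ\mr{D}_2(\modPhi(u))=\uptau_n\circ\mr{D}_2(\uppi_{\F}^{(0)})\circ\mr{D}_2(\GFix{u})$, so after cancelling the common factor $\mr{D}_2(\GFix{u})$ the corollary reduces to the identity of maps $\mr{D}_2(\GFix{\Sigma^{n\uprho}\rH\uF})\to\Sigma^{2n}\rH\F$
\[ \uptau_n\circ\mr{D}_2(\uppi_{\F}^{(0)})\;=\;\uppi_{\F}^{(0)}\circ\GFix{\underline{\uptau}_n}\circ\uplambda^{\Phi}_{\Sigma^{n\uprho}\rH\uF}. \]

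To prove this I would run the diagram defining $\underline{\uptau}_n$ through $\GFixFunctor$ and compare termwise with the diagram defining $\uptau_n$, using $\GFix{\Sigma^{k\uprho}\rH\uF}\simeq\Sigma^{k}\GFix{\rH\uF}$. The equivariant diagonal and multiplication become the classical $\updelta_2$ and $\upmu_{\F}$; the equivariant Thom class $\underline{\uu}_n$ becomes the classical Thom class $\uu_n$ under $\modPhi$ followed by the comparison map, which is precisely \autoref{lem:eulercompare}; and for the last ingredient one uses that $\Theta_2^{\GFix{\rH\uF}}=\GFix{\Theta_2^{\uF}}\circ\uplambda^{\Phi}_{\rH\uF}$ by construction, together with the fact that $\uppi_{\F}^{(0)}$ is an $\bE_{\infty}$-ring map, which yields $\uppi_{\F}^{(0)}\circ\GFix{\Theta_2^{\uF}}\circ\uplambda^{\Phi}_{\rH\uF}=\Theta_2^{\F}\circ\mr{D}_2(\uppi_{\F}^{(0)})$. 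Substituting these identifications into $\GFix{\underline{\uptau}_n}$ reproduces $\uptau_n\circ\mr{D}_2(\uppi_{\F}^{(0)})$, which proves the reduced identity and hence \eqref{Powercompare}. Exactly as in the proof of \autoref{thm:squareunder}, stability of the squaring operations then upgrades the statement from $n\uprho$-graded classes on spaces to arbitrary classes, which is what feeds into \autoref{thm:squarefix}.

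The step I expect to be the main obstacle is organizational rather than conceptual: pinning down the equivalences $\GFix{\mr{D}_2^{\Ctwo}(\mr{S}^{n\uprho})}\simeq\Sigma^{n}\mr{Th}(\upgamma^{\oplus n})$ and $\GFix{\Sigma^{2n\uprho}\rH\uF}\simeq\Sigma^{2n}\GFix{\rH\uF}$ so that they are genuinely compatible with the Thom-class maps, verifying that $\uplambda_{\rX}$ really is the suspension-spectrum shadow of $\uplambda^{\Phi}_{\Sigma^{\infty}_{\Ctwo}\rX}$ so that the naturality move is legitimate, and applying the $\bE_{\infty}$-compatibility of $\uppi_{\F}^{(0)}$ to the correct structure map. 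Each of these is routine given \autoref{sec:Comparison}, but together they are where care is required.
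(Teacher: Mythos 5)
Your proposal is correct and follows essentially the same route as the paper: both unwind the two sides into composites out of $\mr{D}_2$ of the fixed points and match them using naturality of the comparison maps, the fact that $\uppi_{\F}^{(0)}$ is an $\bE_\infty$-ring map, and the Thom-class comparison of \autoref{lem:eulercompare}. The only difference is organizational: you cancel $\mr{D}_2(\GFix{u})$ and reduce to a universal identity about $\uptau_n$ and $\underline{\uptau}_n$, whereas the paper verifies the same ingredients in a single large commutative diagram.
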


\begin{proof}
It is enough to show that in the following diagram  commutes as the  blue path and the red path indicates the left-hand side and the right-hand side of \eqref{Powercompare} respectively. 
\[ 
\begin{tikzpicture}
\node[scale=0.9] at (0,0) {
\begin{tikzcd}[row sep=8ex, column sep=8ex]
& \mr{D}_2(\CFix{\rX})  \dar[color = blue, "\mr{D}_2(\GFix{u})"']  \ar[rd, phantom, "\text{(A)}"]  \rar[color = red, "\uplambda_{\rX}"] & \CFix{\mr{D}_2^{\Ctwo}(\rX)}\dar[color= red, "\GFix{\mr{D}_2^{\Ctwo}(u)}"] \\
\mr{D}_2(\Sigma^n \rH\F)\dar[color = blue, "\updelta_2"'] \ar[rd, phantom, "\text{(B)}"]  & \lar[color = blue, "\mr{D}_2(\Sigma^n \uppi^{(0)}_{\F})"']  \mr{D}_2(\GFix{\Sigma^{n \uprho} \rH\uF}) \ar[rd, phantom, "\text{(C)}"]  \rar["\uplambda^{\Phi}_{\Sigma^{n \uprho} \sma \rH\uF}"]  \dar["\updelta_2"]  & \GFix{\mr{D}_2^{\Ctwo}(\Sigma^{n \uprho} \rH\uF )}  \dar[color = red, "\Phi(\updelta_2^{\Ctwo})" ]  \\
\mr{D}_2(\mr{S}^n) \sma \mr{D}_2(\rH\F) \dar[color = blue, "\uu_n \sma \Theta_2^{\F}"'] \ar[rd, phantom, "\text{(E)}"]  & \lar["\mathbbm{1} \sma\mr{D}_2 (\uppi^{(0)}_{\F}) "'] \mr{D}_2(\mr{S}^n) \sma \mr{D}_2(\GFix{\rH\uF})  \dar["\uu_n \sma \Theta_2^{\Phi \F}"'] \rar \ar[rd, phantom, "\text{(F)}"] & \GFix{\mr{D}_2^\Ctwo(\mr{S}^{n \uprho}) }\sma\GFix{ \mr{D}_2^{\Ctwo}(\rH\uF)} \dar[color = red, "\underline{\uu}_n \sma \Theta_2^{ \uF}"]   \\
\Sigma^{2n} \rH\F \sma \rH\F \dar[color = blue, "\Sigma^{2n}\upmu_{\F} "'] \ar[rrd, phantom, "\text{(G)}"]  & \Sigma^{2n} \rH\F \sma \GFix{\rH\uF} \lar["\mathbbm{1} \sma \uppi_{\F}^{(0)}"]  & \lar["\Sigma^{2n}\uppi^{(0)}_{\F} \sma \mathbbm{1}"]  \GFix{\Sigma^{2n \uprho}\rH\uF} \sma \GFix{ \rH \uF} \dar[color = red, "\Sigma^{2n} \GFix{\upmu_{\uF}}"] \\
\Sigma^{2n}\rH\F && \Sigma^{2n}\GFix{\rH\uF} \ar[ll, color = red, "\Sigma^{2n} \uppi^{(0)}_{\uF}"]
\end{tikzcd}
}; \end{tikzpicture}
\]
The squares (A), (B) and (C) commute naturally, the squares (E) and (G) commute because $\pi_{\F}^{(0)}$ is an $\bE_\infty$-ring map, and (F) commutes because of \autoref{lem:eulercompare}.
\end{proof}

\begin{proof}[Proof of \autoref{thm:squarefix}]
For any space $\rX \in \Top_*^{\Ctwo}$ and a class $ u \in \rHC^{n \uprho}(\rX)$, we have a commutative diagram 
\[ 
\begin{tikzcd}
(\mr{B}\Sigma_2)_+ \sma \Fix{\rX} \ar[rr, "\Delta_{\Fix{\rX}}"] \dar["\uplambda_{\mr{S}^0} \sma \mathbbm{1}_{\Fix{\rX}}"']  && \mr{D}_2(\Fix{\rX})) \dar["\uplambda_{\rX}"]\\
(\mr{B}_{\Ctwo}\Sigma_2)_+ \sma \Fix{\rX} \ar[rr, "\Fix{(\Delta_{\rX}^{\Ctwo})}"'] && \Fix{\mr{D}_2^{\Ctwo}(\rX)} .
\end{tikzcd}
\]
Therefore, 
\begin{eqnarray*}
\sum_{i= 0 }^n \ttt^{n-i} \otimes \Sq^i(\Phi_*(u)) &=& \Delta_{\Fix{\rX}}^*(\mathcal{P}_n(\Phi_*(u)) )   \\
&=&  \Delta_{\Fix{\rX}}^*(\uplambda^*_{\rX}(\Phi_*(\mathcal{P}_{2n,n}(u)))) \\
&=&  (\uplambda_{\mr{S}^0}^* \otimes \mathbbm{1}_{\Fix{\rX}}^*) ((\Fix{(\Delta_{\rX}^{\Ctwo})})^*(\Phi_*(\mathcal{P}_{2n,n}(u))))  \\
 &=& (\uplambda_{\mr{S}^0}^* \otimes \mathbbm{1}_{\Fix{\rX}}^*) \Phi_*((\Delta^\Ctwo_\rX)^*(\mathcal{P}_{2n,n}(u)) )\\
&=& (\uplambda_{\mr{S}^0}^* \otimes \mathbbm{1}_{\Fix{\rX}}^*) \Phi_* \left(\sum_{i = 0}^{n} \xx^{n-i} \otimes \Sq^{2i,i}(u) \right. \\
&& \hspace{80pt} \left.+ \sum_{i = 0}^{n}\yy \xx^{n-i-1} \otimes \eSq^{2i+1}(u) \right)
\end{eqnarray*}
\begin{eqnarray*}
\hspace{112pt} &=& (\uplambda_{\mr{S}^0}^* \otimes \mathbbm{1}_{\Fix{\rX}}^*) \left(\sum_{i = 0}^{n} \ttt^{n-i} \otimes \Phi_*(\eSq^{2i}(u)) \right. \\
&& \hspace{80pt} \left.+ \sum_{i = 0}^{n}\iota\ttt^{n-i-1} \otimes \Phi_*(\eSq^{2i+1}(u) )\right)\\
&=& \sum_{i = 0}^{n} \ttt^{n-i} \otimes \Phi_*(\eSq^{2i}(u)),
\end{eqnarray*}
and hence, the result is true for all $u \in \rHC^{n\uprho}(\rX)$ for any  $\rX \in \Top_*^{\Ctwo}$. 

Since the squaring operations are stable, the result extends to arbitrary $\RO(\Ctwo)$-graded cohomology classes. Moreover, since
the geometric fixed point functor $\Phi$ commutes with suspensions \eqref{GFixsuspension},
and any $\mr{E}\in \SpfinC$ is equivalent to $\Sigma^{-n} \Sigma^\infty_\Ctwo \rX$ for some $n$ and $\rX\in \Top_*^\Ctwo$, we conclude the same for any $u \in \rHC^{\star}(\mr{E})$. 
\end{proof}

\section{ Topological realization of $\cAR(1)$}
\label{sec:Trealization}

We begin by proving \autoref{list},  which identifies all possible $\cAR$-module structures on $\cAR(1)$ up to isomorphism. 

\begin{figure}[h]
\includegraphics[width=0.6\textwidth]{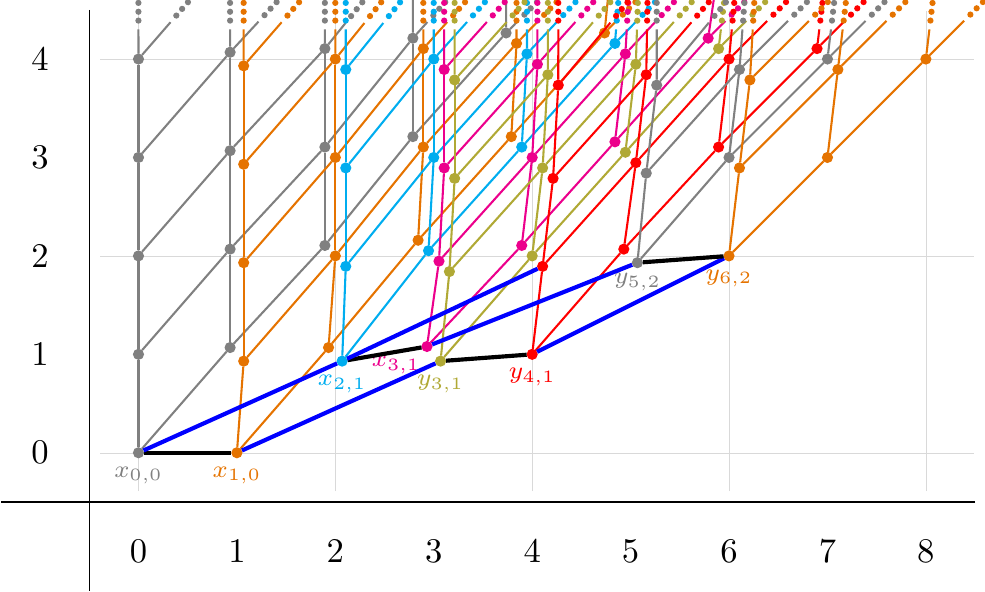}
\caption{This figure displays the free $\bMR$-module $\cAR(1)$, with $\Sq^1$ and $\Sq^2$-multiplications drawn in only on the $\bMR$-module generators. 
}
\label{fig:motivicA1Roots}
 \end{figure}
\begin{proof}[Proof of \autoref{list}]  Note that the Cartan formula of $\cAR$  and finiteness of $\cAR(1)$ imply that the $\cAR$-module structure on $\cAR(1)$ is determined once  the action of $\Sq^4$ and $\Sq^{8}$ are specified on its $\bM^\R$-generators. 
The following are possible $\Sq^4$ and $\Sq^8$-actions on the $\bMR$-module generators. As can be seen in \autoref{fig:motivicA1Roots}, there is no room for other possible actions.
\begin{eqnarray*}
\Sq^4(x_{0,0}) &=& \beta_{03}  (\rho \cdot y_{3,1}) + \beta_{04} ( \tau \cdot y_{4,1}) + \alpha_{03}(\rho \cdot x_{3,1} )\\
\Sq^4(x_{1,0}) &=& \beta_{14} (\rho \cdot y_{4,1}) + \beta_{15}(y_{5,2})  \\
\Sq^4(x_{2,1})&=&   \jay_{24} (\rho^2 \cdot y_{4,1}) + \beta_{25} (\rho \cdot y_{5,2}) +\beta_{26} (\tau \cdot y_{6,2})\\
\Sq^4(x_{3,1})&=& \beta_{36} (\rho \cdot y_{6,2})\\
\Sq^4(y_{3,1})&=& \gamma_{36} (\rho \cdot y_{6,2}) \\
\Sq^8(x_{0,0}) &=& \beta_{06} (\rho^2 \cdot y_{6,2})
\end{eqnarray*}
The Adem relation $\Sq^2 \Sq^3 = \Sq^5 + \Sq^4 \Sq^1 + \rho \Sq^3 \Sq^1$ (see \autoref{RMotivicAdem}), when applied to $x_{0,0}$ and $x_{2,1}$, yields $\beta_{15} =1$, $\beta_{03} + \beta_{04} + \beta_{14}  =1$ and $\beta_{25} + \beta_{26}= \beta_{36}$.  The equation 
\[\jay_{24}= \beta_{03}  \gamma_{36} + \alpha_{03}\beta_{36},\] 
is forced  by the Adem relation $ \Sq^4 \Sq^4 = \Sq^2 \Sq^4 \Sq^2 + \tau \Sq^3 \Sq^4 \Sq^1$ when 
applied to $x_{0,0}$. This exhausts all constraints imposed by Adem relations in these dimensions.
\end{proof}  

In \autoref{list}, there are exactly seven free variables taking values in $\F$, and therefore, there are exactly 128 different $\cAR$-module structure on $\cAR(1)$. Thus, in order to complete the proof of \autoref{thm:128R}, we  realize these $\cAR$-modules as spectra using \autoref{wTRT3}, which is a weak form of the $\R$-motivic Toda realization theorem. 

 \begin{proof}[Proof of \autoref{thm:128R}]
 Firstly, note that $\cAR_{ \overline{\mr{v}}}(1)$ is a cyclic $\cA^{\R}$-module for all $\overline{\mr{v}} \in \mathcal{V}$, therefore $\cA^\C_{ \overline{\mr{v}}}(1) := \cAR_{ \overline{\mr{v}}}(1)/(\rho) $ admits a May filtration. Secondly, note that 
\[ \gr(\cA^\C_{ \overline{\mr{v}}}(1)) \cong \Lambda_{\bM^\C}(\upxi_{1,0}, \upxi_{1,1}, \upxi_{2,0}) \]
 as an $\gr(\cA^\C)$-module (see \eqref{xi-ij} for notation). Consequently, 
 \begin{equation} \label{MayA1}
   {^{\mr{May}}}\mr{E}_{1, \cA^\C_{ \overline{\mr{v}}}(1) }^{*,*,*,*} \cong  {^{\mr{May}}}\mr{E}_{1, \bM^\C }^{*,*,*,*} /(\sfh_{1,0}, \sfh_{1,1}, \sfh_{2,0}) \cong \frac{\bM^\C[\sfh_{i,j}: i \geq 1, j \geq 0] }{(\sfh_{1,0}, \sfh_{1,1}, \sfh_{2,0})}. 
   \end{equation}
In the notation of \autoref{subsec:wTRT3} 
\[ \mathcal{D}_{\cAR(1)} = \{ (0,0), (1,0),(2,1), (3,1), (4,1),(5,2), (6,2) \}\] 
By directly inspecting the $(s,f,w)$-degree of   ${^{\mr{May}}}\mr{E}_{1, \cA_{ \overline{\mr{v}}}^\C(1) }^{*,*,*,*}$, 
we see that the condition necessary for existence in  \autoref{wTRT3} is satisfied. Hence, the result. 
 \end{proof}
\begin{rmk} \label{nonuniqueA}The vanishing region of ${^{\mr{May}}}\mr{E}_{1, \cA_{ \overline{\mr{v}}}^\C(1) }^{*,*,*,*}$ does not preclude the possibility of having  a nonzero element in $\Ext_{\cAR}^{-1, 2, 0}(\mr{M}, \M)$. We suspect (even after running  the differentials in \eqref{aAHSS} and \eqref{rhoBSS}), that the above group is nonzero for a given $\cAR$-module structure on $\cAR(1)$,  and that there are, up to homotopy,   multiple realizations as  $\R$-motivic spectra. 
\end{rmk}

Our next goal is to prove \autoref{thm:exactcofiber}. We begin with the following observation. 

\begin{lem} \label{uniqueB}The $\cAR$-modules  $\mathcal{B}_{2}^\R(1)$ and $\mathcal{B}_{\sfh}^\R( 1)$ are uniquely realizable as objects in $\SpfinR$.  
\end{lem}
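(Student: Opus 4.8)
The realizability half of \autoref{uniqueB} is immediate from the definitions, since $\mathcal{B}_\epsilon^\R(1) = \rHR^{*,*}(\Y^\R_{(\epsilon,1)})$ for $\epsilon \in \{2,\sfh\}$, so $\Y^\R_{(\epsilon,1)} \in \SpfinR$ is a realization. Thus the content is the uniqueness assertion, and the plan is to verify the uniqueness criterion of the weakest (and most readily computable) form of the $\R$-motivic Toda realization theorem, \autoref{wTRT3}, for the $\cAR$-modules $\mr{M} = \mathcal{B}_\epsilon^\R(1)$; concretely this amounts to feeding the May spectral sequence \eqref{eqn:maySS} and the $\rho$-Bockstein spectral sequence \eqref{rhoBSS} into \eqref{aAHSS} in order to bound $\Ext^{-1,f,0}_{\cAR}(\mathcal{B}_\epsilon^\R(1), \mathcal{B}_\epsilon^\R(1))$ for $f \geq 2$, which is the hypothesis of \autoref{TUT}.

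First I would dispose of both cases simultaneously. By \cite{BGL}*{Lemma~4.6}, $\mathcal{B}_2^\R(1)$ and $\mathcal{B}_\sfh^\R(1)$ have the same underlying $\bMR$-module --- free of rank four, with basis in bidegrees $(0,0)$, $(1,0)$, $(2,1)$, $(3,1)$, cyclic over $\cAR(1)$ --- and differ only in that $\Sq^4$ of the bottom basis element equals $\rho$ times the top basis element in $\mathcal{B}_2^\R(1)$ but vanishes in $\mathcal{B}_\sfh^\R(1)$. Since this discrepancy is $\rho$-divisible, the two modules have the same reduction mod $\rho$; write $\mr{B}^\C := \mathcal{B}_\epsilon^\R(1)/(\rho)$ for this common cyclic $\cA^\C$-module, on which $\Sq^1$ and $\Sq^2$ act as in the bottom four cells of $\cAR(1)$. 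In particular $\mathcal{D}_{\mathcal{B}_\epsilon^\R(1)} = \{(0,0),(1,0),(2,1),(3,1)\}$ for both values of $\epsilon$.

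Next, exactly as in the proof of \autoref{thm:128R}, the associated graded of $\mr{B}^\C$ over $\gr(\cA^\C) = \Lambda_{\bM^\C}(\upxi_{i,j})$ is $\Lambda_{\bM^\C}(\upxi_{1,0},\upxi_{1,1})$, so its May $E_1$-page is
\[ {}^{\mr{May}}\mr{E}_{1,\,\mr{B}^\C}^{*,*,*,*} \cong \bM^\C[\sfh_{i,j} : i \geq 1,\ j \geq 0]\big/(\sfh_{1,0},\sfh_{1,1}). \]
By the uniqueness half of \autoref{wTRT3} it then suffices to show this ring vanishes in all tridegrees $(s-1+i,\, f,\, w+i)$ with $f \geq 2$, $i \geq 0$ and $(s,w) \in \{(0,0),(1,0),(2,1),(3,1)\}$ (the corresponding $f \geq 3$ condition, needed for existence, follows a fortiori). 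After killing $\sfh_{1,0}$ and $\sfh_{1,1}$, every surviving polynomial generator $\sfh_{i,j}$ has stem $\geq 2$ and satisfies $\mathrm{stem} = 2\cdot\mathrm{weight}$ or $\mathrm{stem} = 2\cdot\mathrm{weight}-1$; tracking the $\bM^\C = \F[\tau]$-coefficients (with $\tau$ of tridegree $(0,0,-1)$) then confines the filtration-$\geq 2$ part to a region that avoids these finitely many tridegrees, the tightest check being the largest-stem case $(s,w)=(3,1)$.

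This last degree count is the only real work, and I expect it to be genuinely tight: enlarging $\mathcal{D}$ to the eight bidegrees of $\cAR(1)$ breaks it --- for instance $\tau\,\sfh_{2,0}\sfh_{1,2}$ survives in the tridegree $(5,2,2)$ attached to $(s,w)=(6,2)$, $i=0$, $f=2$ --- which is precisely the non-uniqueness flagged in \autoref{nonuniqueA}. Once the vanishing over the four bidegrees of $\mathcal{B}_\epsilon^\R(1)$ is confirmed, \autoref{wTRT3} (equivalently \autoref{TUT}) gives that $\Y^\R_{(\epsilon,1)}$ is the unique realization of $\mathcal{B}_\epsilon^\R(1)$ in $\SpfinR$, for $\epsilon = 2$ and for $\epsilon = \sfh$.
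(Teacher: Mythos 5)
Your argument is essentially the paper's own proof: both reduce mod $\rho$ to the common cyclic $\cA^{\C}$-module, identify its May $\mr{E}_1$-page as $\F[\tau][\sfh_{i,j}]/(\sfh_{1,0},\sfh_{1,1})$, record $\mathcal{D}=\{(0,0),(1,0),(2,1),(3,1)\}$, and invoke the uniqueness clause of \autoref{wTRT3}, with the final vanishing verified by the same degree count. The only quibble is your closing aside: the witness for \autoref{nonuniqueA} should be taken in the May page of $\cA^{\C}_{\overline{\mr{v}}}(1)$, where $\sfh_{2,0}$ is also killed (so something like $\tau\sfh_{1,2}^2$ in tridegree $(6,2,3)$, $i=1$, rather than $\tau\sfh_{2,0}\sfh_{1,2}$), but this does not affect the proof of the lemma.
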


\begin{proof} Both $\mathcal{B}_{2}^\R(1)$ and $\mathcal{B}_{\sfh}^\R(1)$ are cyclic $\bMR$-free finite $\cAR$-module and  
\[ \mathcal{B}_{2}^\R(1)/(\rho) \cong \mathcal{B}_{\sfh}^\R(1)/(\rho)\] 
as $\cA^\C$-module. Let $\mathcal{B}^{\C}(1) := \mathcal{B}_{2}^\R(1)/(\rho)$. 
It is easy to see that $\gr(\mathcal{B}^{\C}(1))$ is isomorphic to  $\Lambda_{\bM^\C}(\upxi_{1,0}, \upxi_{1,1})$ 
as an $\gr(\cA^\C)$-module, and therefore
 \begin{equation} \label{mayB}
   {^{\mr{May}}}\mr{E}_{1, \mathcal{B}^{\C}(1)}^{*,*,*,*}  \cong     \F[\tau][ h_{i,j}: i \geq 1, j \geq 0]/(\sfh_{1,0}, \sfh_{1,1}) .
 \end{equation}
Using this, along with the fact that
\[ \mathcal{D}_{\mathcal{B}^{\C}(1)} = \{ (0,0), (1,0),(2,1), (3,1), \}\] 
 shows that the condition necessary for existence as well as uniqueness in  \autoref{wTRT3} is satisfied. Hence, the result. 
\end{proof}

\begin{proof}[Proof of \autoref{thm:exactcofiber}]
 Consider the injective $\cA^\R$-module  map $\Sigma^{3,1}\mathcal{B}_{\epsilon}^\R(1) \to \mr{A}^\R_{\overline{\mr{v}}}$ sending the $\bM^\R$ generator in degree $(3,1)$ to $x_{3,1}  + y_{3,1}$. It follows from  \autoref{list} that the quotient is isomorphic to {$\mathcal{B}_{\delta}^\R(1)$}. Thus, we have the exact sequence  \eqref{exactBAB}. 
 
The topological realization of \eqref{exactBAB}, i.e. the cofiber sequence \eqref{cofiberYA1}, would follow  immediately from \autoref{uniqueB} once we show that any one of the $\cAR$-module maps in  \eqref{exactBAB} can be realized as a map in $\SpfinR$.  Thus, it is enough to show that the nonzero class in the $\mr{E}_2$-page represented by the projection map $\cAR_{\overline{\mr{v}}}(1) \twoheadrightarrow \mathcal{B}_{\delta}^\R(1)$ in degree $(0,0,0)$ of  the $\R$-motivic Adams spectral sequence 
\begin{equation} \label{assBA}
 \mr{E}_2^{s,f,w} := \Ext^{s, f, w}_{\cAR}(\cAR_{\overline{\mr{v}}}(1) , \mathcal{B}_{\delta}^\R(1)) \Rightarrow \left[ \Y_{(\delta,1)}^\R,  \cA_{1}^\R [\overline{\mr{v}}] \right]_{s,w}
 \end{equation}
  is a nonzero permanent cycle. 
  
  Using \eqref{MayA1}, the $\rho$-Bockstein  spectral sequence \eqref{rhoBSS} for $\cA_1^\R[\overline{\mr{v}}]$ and the Atiyah-Hirzebruch spectral sequence
  \[  \mathcal{B}^{s', w'}_{\mathcal{B}_{\delta}^\R(1)} \otimes  \Ext^{s, f, w}_{\cAR}(\cAR_{1}[\overline{\mr{v}}] , \bMR) \Rightarrow  \Ext^{s+s', f, w+w'}_{\cAR}(\cAR_{1}[\overline{\mr{v}}] , \mathcal{B}_{\delta}^\R(1)), \]
  one can easily check  $\mr{E}_2^{-1,f,0} = 0$ for all  $f \geq 2$, and thus any nonzero element in degree $(0,0,0)$ of the $\mr{E}_2$-page in \eqref{assBA} is, in fact, a nonzero permanent cycle.
\end{proof}

Our next goal is to analyze the underlying spectrum and  geometric fixed-points spectrum of $\cA_1^\Ctwo[\overline{\mr{v}}]$, the Betti realization of  $\cA_1^\R[\overline{\mr{v}}]$. 

\subsection{The Betti realization of $\cAR_1$}
\

 Under the Betti-realization map 
\begin{equation} \label{betipoint}
\begin{tikzcd}
\upbeta_*: \pi_{\ast, \ast}\RHF \cong \F[\rho, \tau]\rar &  \pi_{\star} \rH\uF
\end{tikzcd}
\end{equation}
$\rho \mapsto a_{\upsigma}$ and $\tau \mapsto u_{\upsigma}$. Since the functor $\upbeta$ is symmetric monoidal and $\upbeta(\rH_\R\F)= \rH \uF$, the $i$-th $\R$-motivic squaring operations maps to the $i$-th $\RO(\Ctwo)$-graded squaring operations under the map 
\[ 
\begin{tikzcd}
 \upbeta_*: \cAR \rar & \cA^\Ctwo. 
 \end{tikzcd}
\]
Hence,  $\rHC^{\star}\!(\cA_1^\Ctwo[\overline{\mr{v}}])$ is $\bMC$-free (as $\rHR^{*,*}(\cA_1^\R[\overline{\mr{v}}])$ is $\bM^\R$-free) and its $\cA^\Ctwo$-module structure is essentially given by  \autoref{list} (after replacing $\Sq^i$ with $\eSq^i$ and $\bM^\R$-basis elements by its image under $\upbeta_*$).

\begin{rmk} \label{nochange} The map  $\upbeta_*$ of \eqref{betipoint} is only an injection with cokernel the summand  $\Theta\{ u_{\upsigma}^{-i} a_{\upsigma}^{-j}: i,j \geq 0\}$ of $\bMC$.  In general, for an $\cA^\R(1)$-module $\mr{M}_\R$,  the number of $\cA^\Ctwo$-module structures on $\upbeta(\mr{M}_\R)$ can be strictly larger  than the number of $\cA^\R$-module structures on $\M_\R$. But this is not the case when $\M_\R = \cA^\R_{\overline{\mr{v}}}(1)$ simply for degree reasons, therefore \autoref{cor:128C} holds.
\end{rmk}

 As discussed in \autoref{eg:ResHF},
 the restriction map
 \[
 \begin{tikzcd}
 \Res_*: \bMC \rar &  \F
 \end{tikzcd}
 \] 
 sends $a_{\upsigma} \mapsto 0$,  $u_{\upsigma} \mapsto 1$, and $\Theta\mapsto 0$. 
 Thus, when  $\rHC^{\star}\!(\mr{E})$ is $\bMC$-free, $\Res_*$  is simply ``setting $a_{\upsigma} = 0$,  $u_{\upsigma} =1$, and $\Theta=0$".  
This observation, along with \autoref{thm:squareunder}, allows us to completely deduce  the $\cA$-module structure of $\rH^*(\ParRes{ \cAR_{\overline{\mr{v}}}(1)})$  from \autoref{list}. 
Together with the fact that the $\cA$-module structures on $\cA(1)$ are uniquely-realized, our observations yield 
the following theorem, where the notation  {$\mr{A}_1[i,j]$} is adopted from \cite{BEM}. 

\begin{thm} \label{underA1} 
For $ \overline{\mr{v}} = ( \alpha_{03}, \beta_{03},\beta_{14}, \beta_{06}, \beta_{25}, \beta_{26}, \gamma_{36}) \in \mathcal{V}$ (as in \autoref{list}), \[ \ParRes{\cA_1^\Ctwo[\overline{\mr{v}}]} \simeq \mr{A}_1[1+ \beta_{03} + \beta_{14}, \beta_{26}].\] 
\end{thm}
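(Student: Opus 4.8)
The plan is to compute $\rH^*(\ParRes{\cA_1^{\Ctwo}[\overline{\mr{v}}]})$ directly as an $\cA$-module via the restriction map, and then upgrade the resulting isomorphism of $\cA$-modules to a homotopy equivalence using the classical uniqueness of realizations of $\cA(1)$. Recall from the discussion preceding the theorem (and \autoref{nochange}) that $\rHC^{\star}(\cA_1^{\Ctwo}[\overline{\mr{v}}])$ is free over $\bMC$ with $\cA^{\Ctwo}$-action obtained from \autoref{list} by the substitutions $\Sq^i \rightsquigarrow \eSq^i$, $\rho \rightsquigarrow a_{\upsigma}$, $\tau \rightsquigarrow u_{\upsigma}$. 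Because $\rHC^{\star}(\cA_1^{\Ctwo}[\overline{\mr{v}}])$ is $\bMC$-free, the natural map $\Res_*$ identifies $\rH^*(\ParRes{\cA_1^{\Ctwo}[\overline{\mr{v}}]})$ with $\rHC^{\star}(\cA_1^{\Ctwo}[\overline{\mr{v}}]) \otimes_{\bMC} \F$, i.e.\ with the $\F$-span of the images of the eight basis elements of \autoref{fig:motivicA1}, obtained by ``setting $a_{\upsigma}=0$, $u_{\upsigma}=1$, $\Theta=0$'' (see \autoref{eg:ResHF}); and by \autoref{thm:squareunder} the $\cA$-action on this module is computed by applying $\Res_*$ to the $\eSq$-formulas of \autoref{list}.

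Next I would carry out the substitution. The $\eSq^1$- and $\eSq^2$-relations of \autoref{fig:motivicA1} restrict to the relations of a free $\cA(1)$-module on one generator (in particular $\eSq^2(x_{2,1}) = u_{\upsigma}\,y_{4,1}$ becomes $\Sq^2 x_2 = x_4$, so the underlying $\cA(1)$-module is indeed free of rank one). For the higher squares, every $a_{\upsigma}$-divisible (equivalently $\rho$-divisible) term is killed by $\Res_*$, so from \autoref{list}\,(i)--(vi) only the $u_{\upsigma}$-multiples survive: $\Sq^4 x_0 = (1+\beta_{03}+\beta_{14})\,x_4$ from (i), $\Sq^4 x_1 = x_5$ from (ii), and $\Sq^4 x_2 = \beta_{26}\,x_6$ from (iii), while (iv), (v) and (vi) contribute nothing. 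Hence the underlying $\cA$-module structure on $\cA(1)$ is pinned down by the two bits $1+\beta_{03}+\beta_{14}$ and $\beta_{26}$, which are precisely the labels used by \cite{BEM} to enumerate the four possible $\cA$-module structures on $\cA(1)$; therefore $\rH^*(\ParRes{\cA_1^{\Ctwo}[\overline{\mr{v}}]}) \cong \mr{A}_1[1+\beta_{03}+\beta_{14},\,\beta_{26}]$ as $\cA$-modules.

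Finally, $\ParRes{\cA_1^{\Ctwo}[\overline{\mr{v}}]}$ is a finite $2$-complete spectrum (it is the restriction of the Betti realization of an object of $\SpfinR$), hence a realization of the $\cA$-module just identified. By the classical uniqueness of realizations of $\cA(1)$ --- Toda's uniqueness theorem, cf.\ \cite{BE20}*{Proposition~5.1} and \cite{DM}, equivalently the Adams spectral sequence argument used to prove \autoref{TUT} --- this realization is unique up to weak equivalence, so $\ParRes{\cA_1^{\Ctwo}[\overline{\mr{v}}]} \simeq \mr{A}_1[1+\beta_{03}+\beta_{14},\,\beta_{26}]$. The main obstacle is bookkeeping rather than conceptual: one must verify that the two surviving coefficients genuinely match \cite{BEM}'s indexing convention for $\mr{A}_1[i,j]$ (so that no additional datum, such as a residual secondary operation, is needed to determine the $\cA$-module), and confirm that the finiteness and completeness hypotheses of the classical uniqueness statement apply to the underlying spectrum of a Betti realization.
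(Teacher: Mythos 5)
Your proposal is correct and takes essentially the same route as the paper: identify $\Res_*$ on the $\bMC$-free cohomology with the specialization $a_{\upsigma}=0$, $u_{\upsigma}=1$, $\Theta=0$ (via \autoref{eg:ResHF}), use \autoref{thm:squareunder} to push the formulas of \autoref{list} through, so that only $\Sq^4x_0=(1+\beta_{03}+\beta_{14})x_4$, $\Sq^4x_1=x_5$ and $\Sq^4x_2=\beta_{26}x_6$ survive, and then conclude the equivalence from the classical unique realizability of the $\cA$-module structures on $\cA(1)$ in the notation $\mr{A}_1[i,j]$ of \cite{BEM}. Your closing bookkeeping caveats (matching the $[i,j]$ convention, finiteness/$2$-completeness of the underlying spectrum) are exactly the points the paper treats as immediate.
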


Now we shift our attention towards understand the geometric fixed-points of $\cA_1^\Ctwo[\overline{\mr{v}}]$. 
As discussed in \autoref{eg:FixHF}, the modified geometric fixed-points functor 
\[ 
\begin{tikzcd}
\modPhi \colon \bMC \rar & \F 
\end{tikzcd}
\]
sends $a_{\upsigma} \mapsto 1$,  $u_{\upsigma} \mapsto 0$, and $\Theta\mapsto 0$. Thus, when  $\rHC^{\star}\!(\mr{E})$ is $\bMC$-free, $\modPhi$  is simply ``setting $a_{\upsigma} =1$,  $u_{\upsigma} =0$, and $\Theta=0$".  This, along with  \autoref{list} and  \autoref{thm:squarefix}, gives the following.
\begin{notn}
Because $\rHC^{\star}\!(\cA_{1}^\Ctwo [\overline{\mr{v}}])$ is $\bMC$-free, the  $\rH\F$-cohomology of  $\GFix{\cA_{1}^\Ctwo [\overline{\mr{v}}]}$ consists of eight $\F$-generators, all of which are in the image of $\modPhi$. We let 
\[ \text{$s_{0} := \modPhi(x_{0,0})$, $s_{1a} := \modPhi(x_{2,1})$, $s_{1b} := \modPhi(x_{1,0})$, $s_{2} := \modPhi(y_{3,1})$}\]
\[ \text{$t_{2} := \modPhi(x_{3,1})$, $t_{3a} := \modPhi(y_{5,2})$, $t_{3b} := \modPhi(y_{4,1})$, $t_{4} := \modPhi(y_{6,2})$}.\]
Note that  $|s_{i (-)}| = |t_{i(-)}| = i$. 
\end{notn}

\begin{thm} \label{GFixA1} Let $ \overline{\mr{v}} = ( \alpha_{03}, \beta_{03},\beta_{14}, \beta_{06}, \beta_{25}, \beta_{26}, \gamma_{36}) \in \mathcal{V}$, and let 
\[ \jay_{24}= \beta_{03} \gamma_{36} + \alpha_{03}(\beta_{25} + \beta_{26}) \]
as in \autoref{list}.  The $\cA$-module structure on $\rH^*(\GFix{\cA_{1}^\Ctwo [\overline{\mr{v}}]})$  is determined by the following relations, as depicted in \autoref{fig:HPhiA1}:
\begin{multicols}{2}
\begin{itemize}
\item $\Sq^1(s_0) = s_{1a}$ \vspace{2pt}
\item $\Sq^1(s_{1b}) = s_2$ \vspace{2pt}
\item $\Sq^1(t_2) = t_{3a}$ \vspace{2pt}
\item $\Sq^1(t_{3b}) = t_4$\vspace{2pt}
\item $\Sq^2(s_0) =  \beta_{03} s_2 + \alpha_{03} t_2$ 
\end{itemize}
\begin{itemize}
\item $\Sq^2(s_{1a}) = \beta_{25} t_{3a}  + \jay_{24} t_{3b}$ \vspace{2pt}
\item $\Sq^2(s_{1b} ) = t_{3a} + \beta_{14} t_{3b}$ \vspace{2pt}
\item $\Sq^2(s_2) = \gamma_{36}t_4 $ \vspace{2pt} 
\item $\Sq^2(t_2) =(\beta_{25} + \beta_{26})t_4$ \vspace{2pt}
\item $ \Sq^4 (s_0) =\beta_{06} t_4$. 
\end{itemize}
\end{multicols}

\end{thm}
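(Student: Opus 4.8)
The plan is to compute the classical Steenrod action on $\rH^*(\GFix{\cA_1^\Ctwo[\overline{\mr{v}}]})$ one generator at a time, feeding \autoref{list} into \autoref{thm:squarefix}. First I would assemble the two inputs. From the subsection on the Betti realization of $\cAR_1$, the cohomology $\rHC^{\star}(\cA_1^\Ctwo[\overline{\mr{v}}])$ is $\bMC$-free on the eight classes $\upbeta_*(x_{0,0}),\dots,\upbeta_*(y_{6,2})$, and its equivariant squaring operations are obtained from the formulas of \autoref{list} — together with the \emph{fixed} $\cAR(1)$-module structure recorded just after \autoref{thm:128R}, since only $\Sq^4$ and $\Sq^8$ vary with $\overline{\mr{v}}$ — via the substitution $\rho\mapsto a_{\upsigma}$, $\tau\mapsto u_{\upsigma}$. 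On the fixed-point side, as in \autoref{eg:FixHF} and the Notation preceding the theorem, $\modPhi$ applied to a $\bMC$-free module is just the quotient ``$a_{\upsigma}=1$, $u_{\upsigma}=0$, $\Theta=0$''; it is surjective and sends the above $\bMC$-basis bijectively onto the $\F$-basis $\{s_0,s_{1a},s_{1b},s_2,t_2,t_{3a},t_{3b},t_4\}$ of $\rH^*(\GFix{\cA_1^\Ctwo[\overline{\mr{v}}]})$.

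The core step is then mechanical. By \autoref{thm:squarefix}, $\Sq^n(\modPhi(z))=\modPhi(\eSq^{2n}(z))$ for each of the eight generators $z$; note the halving of the index, so that $\Sq^1$ on the fixed points is read off from the $\eSq^2$-action (the $\Sq^2$-action depicted in \autoref{fig:motivicA1}, not the $\Sq^1$-action), $\Sq^2$ from the $\eSq^4$-action of clauses (i)--(v) of \autoref{list}, and $\Sq^4$ from the $\eSq^8$-action of clause (vi). For each generator one writes down $\eSq^{2n}(z)$ and then applies $\modPhi$, which kills every $u_{\upsigma}$- and $\Theta$-multiple and sets $a_{\upsigma}=1$. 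For instance $\eSq^4(x_{0,0})=\beta_{03}(a_{\upsigma} y_{3,1})+(1+\beta_{03}+\beta_{14})(u_{\upsigma} y_{4,1})+\alpha_{03}(a_{\upsigma} x_{3,1})$, whose middle term dies, yields $\Sq^2(s_0)=\beta_{03}s_2+\alpha_{03}t_2$; likewise $\eSq^4(x_{2,1})=\beta_{26}(u_{\upsigma} y_{6,2})+\beta_{25}(a_{\upsigma} y_{5,2})+\jay_{24}(a_{\upsigma}^2 y_{4,1})$ gives $\Sq^2(s_{1a})=\beta_{25}t_{3a}+\jay_{24}t_{3b}$, and running through the remaining clauses of \autoref{list} together with the $\Sq^1,\Sq^2$ actions on the $\bMR$-generators reproduces exactly the displayed $\Sq^1$, $\Sq^2$ and $\Sq^4$ relations.

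It remains to check that nothing else is nonzero. Since the generators live in degrees $0,1,2,3,4$, every $\Sq^n$ with $n\geq 5$ vanishes automatically, as do $\Sq^4$ off $s_0$ and $\Sq^3$ off the degree-$0$ and degree-$1$ classes; the leftover candidates (such as $\Sq^1 s_{1a}$, $\Sq^1 s_2$, $\Sq^1 t_{3a}$, $\Sq^2 t_{3b}$) are $\modPhi$ of an $\eSq^{2n}(z)$ which is either $u_{\upsigma}$-divisible or already zero in $\rHC^{\star}$ for degree reasons, hence vanish; I would check these case by case against \autoref{fig:motivicA1}. Finally $\Sq^3=\Sq^1\Sq^2$ (the classical Adem relation, valid here because $\rH^*(\GFix{-})$ is a module over the \emph{ordinary} Steenrod algebra, with no $\rho$- or $\tau$-corrections), so $\Sq^3$ is already determined by the computed $\Sq^1$ and $\Sq^2$. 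Together these pin down the $\cA$-module structure and give \autoref{fig:HPhiA1}.

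This argument is short; the only points that need genuine care are (i) the index-halving in \autoref{thm:squarefix}, so that $\Sq^1$ on the fixed points arises from $\eSq^2$ rather than $\eSq^1$; (ii) bookkeeping of which terms of each $\eSq^{2n}(z)$ survive $\modPhi$ — precisely the $a_{\upsigma}$-only summands; and (iii) handling the odd operation $\Sq^3$ indirectly via an Adem relation, since \autoref{thm:squarefix} addresses only even squares. I do not expect any of these to be a serious obstacle.
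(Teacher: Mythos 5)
Your proposal is correct and follows essentially the same route as the paper, which likewise deduces the theorem by combining \autoref{list} (together with the fixed $\cAR(1)$-action for the $\eSq^2$ part), \autoref{thm:squarefix}, and the observation that on a $\bMC$-free module $\modPhi$ just sets $a_{\upsigma}=1$, $u_{\upsigma}=0$, $\Theta=0$. Your only slip is the remark that \autoref{thm:squarefix} ``addresses only even squares'': it gives $\Sq^n(\modPhi(u))=\modPhi(\eSq^{2n}(u))$ for every $n$ (as you yourself use for $n=1$), so $\Sq^3$ could be read off from $\eSq^6$ directly, though your Adem-relation detour is equally valid and the point is immaterial since the listed $\Sq^1,\Sq^2,\Sq^4$ already determine the $\cA$-module structure.
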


\begin{figure}[h]
\[
\begin{tikzpicture}\begin{scope}[ thick, every node/.style={sloped,allow upside down}, scale=0.6, font=\tiny]
\draw (0,0)  node[inner sep=0] (s0) {} -- (0,1) node[inner sep=0] (s1a) {};
\draw (1.5,2)  node[inner sep=0] (t2) {} -- (1.5,3) node[inner sep=0] (t3a) {};
\draw (3,1)  node[inner sep=0] (s1b) {} -- (3,2) node[inner sep=0] (s2) {};
\draw (4.5,3)  node[inner sep=0] (t3b) {} -- (4.5,4) node[inner sep=0] (t4) {};
 \draw [color=blue,bend right] (s0) to node[pos=0.5,above] {$\alpha_{03}$} (t2);
  \draw [color=blue,out=20,in=-160] (s0) to node[pos=0.5,below] {$\beta_{03}$} (s2);  
  \draw [color=blue,bend left=20] (s1a) to node[pos=0.3,below={-1pt}] {$\beta_{25}$}  (t3a);
  \draw [color=blue,bend left=90] (s1a) to node[pos=0.5,above] {$\jay_{24}$} (t3b);
   \draw [color=blue,out=150,in=-30] (s1b) to (t3a);
  \draw [color=blue,bend right] (s1b) to  node[pos=0.5,below] {$\beta_{14}$}(t3b);
   \draw [color=blue,out=30,in=210] (s2) to node[pos=0.5,below] {$\gamma_{36}$} (t4);
   \draw [color=blue,out=15,in=195] (t2) to node[pos=0.7,above] {$\beta_{25}+\beta_{26}$} (t4);
\draw [color = red] (s0) to (5.5,0) to (5.5,4) to (t4);
\filldraw (s0) circle (2.5pt);
\filldraw (s1a) circle (2.5pt);
\filldraw (t2) circle (2.5pt);
\filldraw (t3a) circle (2.5pt);
\filldraw (s1b) circle (2.5pt);
\filldraw (s2) circle (2.5pt);
\filldraw (t3b) circle (2.5pt);
\filldraw (t4) circle (2.5pt);
\draw (s0) node[left]{$ s_0$} (0,1) node[left]{$ s_{1a}$} (s1b) node[below]{$s_{1b} $} (s2) node[right]{$s_2 $};
\draw (t2) node[left]{$ t_2$} (t3a) node[right]{$ t_{3a}$} (t3b) node[below right={-2pt}]{$t_{3b} $} (t4) node[above]{$t_4 $};
\draw (5.5,2) node[right]{$\beta_{06}$};
\end{scope}\end{tikzpicture}
\]
\caption{The $\cA$-module  $\rH^*(\GFix{\cA_{1}^\Ctwo [\overline{\mr{v}}]})$ }
\label{fig:HPhiA1}
\end{figure}

\section{An $\R$-motivic analogue of the spectrum $\mathcal{Z}$ }
\label{sec:Z}

The type $2$ spectrum $\mathcal{Z} \in \Sp_{2, \mr{fin}}$, introduced in \cite{BE20}, is defined by the property that its cohomology as an $\cA(2)$-module   is
\[ \mathcal{B}(2) := \cA(2) \otimes_{\Lambda(\mr{Q}_2)} \F, \]
where $\mr{Q}_2 = [\Sq^4, \mr{Q}_1]$ is dual to the Milnor generator $\upxi_3$ of the dual Steenrod algebra.  They first show that an $\cA$-module structure on $\cA(2)$ satisfying the criteria in \cite{BE20}*{Lemma~2.7} leads to an $\cA$-module structure on $\mathcal{B}(2)$.   In \cite{BE20}, the authors show that among the 1600 possible $\cA$-module structures on $\cA(2)$ \cite{Roth}, there are some $\cA$-modules that satisfy \cite{BE20}*{Lemma~2.7}. Then they use the classical Toda realization theorem to show that any $\cA$-module whose underlying $\cA(2)$-module structure is $\mathcal{B}(2)$ can be realized as a $2$-local finite spectrum, which they call $\mathcal{Z}$.  

We construct $\mathcal{Z}_\R \in \SpfinR$ by emulating the construction of the classical $\mathcal{Z}$ (as in \cite{BE20}) in the $\R$-motivic context. Since there is no a priori $\cAR$-module structure on $\cAR(2)$,
we produce one in the following subsection. In fact, we construct an $\R$-motivic spectrum whose cohomology is the desired  $\cAR$-module.  

\subsection{A topological realization of $\cAR(2)$}

 Let $\cAR(2)$ denote the sub-$\bMR$-algebra of the $\R$-motivic Steenrod algebra $\cAR$ generated  by $\Sq^1, \Sq^2$, and $ \Sq^4$. We will use a method of Smith (exposed in \cite{Rav}*{Appendix C}) to construct an $\R$-motivic spectrum $\cAR_2 \in \SpfinR$ such that its cohomology as an $\cAR(2)$-module is free on one generator. 
 
Let $\sfh, \eta_{1,1}$ and $\nu_{3,2}$ denote the first three $\R$-motivic Hopf-elements.  

\begin{lem} \label{TodaBHopf}The $\R$-motivic Toda-bracket $\langle \sfh, \eta_{1,1}, \nu_{3, 2} \rangle$ contains $0$. 
 \end{lem}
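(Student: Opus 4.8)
The plan is to show that $\langle\sfh,\eta_{1,1},\nu_{3,2}\rangle$ is a well-defined subset of $\pi_{5,3}(\mathbb{S}^\R_{(2)})$ with trivial indeterminacy, and then to pin its single element at $0$. First I would check the bracket is defined, i.e. that $\sfh\cdot\eta_{1,1}=0$ and $\eta_{1,1}\cdot\nu_{3,2}=0$. The first is the standard relation that the hyperbolic unit annihilates $\eta$: writing $\sfh=2+\rho\eta$ in $\pi_{0,0}\mathbb{S}^\R=\mathrm{GW}(\R)$ and using that $\langle-1\rangle$ acts as $-1$ on $\pi_{1,1}\mathbb{S}^\R\cong\mathrm{W}(\R)$ gives $\sfh\eta=2\eta+\rho\eta^2=2\eta+(\langle-1\rangle-1)\eta=0$ (alternatively this is recorded in \cite{BGL}). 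The second, $\eta_{1,1}\nu_{3,2}=0$, follows from $\pi_{4,3}(\mathbb{S}^\R_{(2)})=0$, which one reads off the $\C$-motivic computation (where $\pi_{4,3}(\mathbb{S}^\C_{(2)})=0$) together with the $\rho$-Bockstein spectral sequence, since no $\rho$-tower originating in a higher-weight $\C$-motivic class reaches this bidegree.

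Next I would compute the indeterminacy $\sfh\cdot\pi_{5,3}\mathbb{S}^\R+\pi_{2,2}\mathbb{S}^\R\cdot\nu_{3,2}$. The group $\pi_{2,2}\mathbb{S}^\R\cong\mathrm{W}(\R)$ is generated by $\eta^2$, so the second summand is generated by $\eta^2\nu_{3,2}=0$; and the first summand vanishes once one knows (from the step below) that every class in $\pi_{5,3}(\mathbb{S}^\R_{(2)})$ is $2$-torsion and $\eta$-annihilated, since $\sfh=2+\rho\eta$. Thus the bracket is a single homotopy class. It then remains to identify $\pi_{5,3}(\mathbb{S}^\R_{(2)})$ and locate that class at $0$: running the $\rho$-Bockstein spectral sequence over the $\C$-motivic stable stems --- where $\pi_{5,3}(\mathbb{S}^\C_{(2)})=0$ and the only generator in a neighbouring bidegree is $\nu^2\in\pi_{6,4}(\mathbb{S}^\C_{(2)})$ --- shows $\pi_{5,3}(\mathbb{S}^\R_{(2)})$ is either $0$ or $\mathbb{Z}/2\{\rho\nu^2\}$. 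In the first case the bracket is forced to be $\{0\}$ and we are done. In the second case the bracket is $0$ or $\rho\nu^2$, and to exclude $\rho\nu^2$ I would combine naturality of Toda brackets under the $\R$-motivic realization functors --- the underlying realization sends the bracket into $\langle2,\eta,\nu\rangle\subseteq\pi_5^s=0$, while the real Betti realization together with geometric fixed points (compare \autoref{sec:Comparison}) sends $\rho$ to a unit and $\nu_{3,2}$ to $\eta$, so $\rho\nu^2$ to $\eta^2\neq0$ --- with a direct shuffling argument built on $\sfh\eta_{1,1}=0$, to pin the value to $0$.

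The hard part will be this last step: correctly identifying $\pi_{5,3}(\mathbb{S}^\R_{(2)})$ through the $\rho$-Bockstein machinery and, should it be nonzero, separating $0$ from $\rho\nu^2$ inside the bracket --- a naive use of naturality only places the image of the bracket inside the (full) indeterminacy of the corresponding classical bracket, so a sharper manipulation is needed. Everything else --- definedness and the indeterminacy computation --- is routine given the known low-degree $\C$-motivic and $\R$-motivic computations.
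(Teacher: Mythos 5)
There is a genuine gap, and it starts with the indeterminacy computation. Since weights add under composition, the bracket $\langle \sfh, \eta_{1,1}, \nu_{3,2}\rangle$ lives in $\pi_{5,3}$ and its indeterminacy is $\sfh\cdot\pi_{5,3} + \pi_{2,1}\cdot\nu_{3,2}$: the middle group must have weight $0+1=1$, not $2$, so the relevant summand is $\pi_{2,1}\cdot\nu_{3,2}$ rather than $\pi_{2,2}\cdot\nu_{3,2}$ as you wrote. This is not a harmless slip, because $\pi_{2,1}$ contains $\rho\,\nu_{3,2}$, and $(\rho\,\nu_{3,2})\cdot\nu_{3,2} = \rho\,\nu_{3,2}^2$ is precisely the only (potentially) nonzero element of $\pi_{5,3}$ by the computations of \cite{BI}. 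So far from being trivial, the indeterminacy contains every nonzero element of the target group; since the bracket is a coset of the indeterminacy, it must contain $0$. That one observation is the entire proof in the paper (phrased via coweight: the bracket sits in stem $5$, coweight $2$, where the only element is $\rho\,\nu_{3,2}^2$, which is a $\nu_{3,2}$-multiple and hence lies in the indeterminacy).

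Because your miscomputation led you to believe the bracket is a single class, you were forced into the genuinely hard problem of distinguishing $0$ from $\rho\,\nu_{3,2}^2$ inside the bracket, and that step is not actually carried out: you acknowledge that naturality under realization functors is too coarse, the claim that geometric fixed points of the Betti realization send $\nu_{3,2}$ to $\eta$ is asserted without justification, and the ``shuffling argument'' is only named, not performed. So as written the proof does not close. The definedness checks ($\sfh\,\eta_{1,1}=0$, $\eta_{1,1}\nu_{3,2}=0$) are fine but are not where the content lies; fixing the weight in the indeterminacy immediately collapses the whole difficulty and recovers the paper's short argument.
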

\begin{proof} 
In this argument, it will be convenient to refer to the ``coweight'', by which we mean the difference $s-w$, as in \cite{C2MW0}. 

Since $\sfh$ and $\eta_{1,1}$ have coweight $0$ while $\nu_{3,2}$ has coweight $1$, it follows that the bracket $\langle \sfh, \eta_{1,1}, \nu_{3, 2} \rangle$ is comprised of elements in stem $5$ with coweight $2$. The only element in  stem 5 with coweight 1 is  $\rho \cdot \nu_{3,2}^2$ \cite{BI}. Since this element is a $\nu_{3,2}$ multiple, it lies in the indeterminacy, which means that the $\R$-motivic Toda-bracket does  contain zero.
\end{proof}

\autoref{TodaBHopf} implies that we can construct a $4$-cell complex $\mathcal{K}$ whose cohomology as an {$\cA^\R$}-module has the  structure described in \autoref{Kexists} and displayed in \autoref{fig:HK}. 

\begin{cor}\label{Kexists} There exists $\mathcal{K} \in \SpfinR$ such that $\rHR^{*,*}(\mathcal{K})$ is $\bMR$-free  on four generators $x_0, x_1,x_3$ and $x_7$, such that $\Sq^{i+1}(x_i) = x_{2i+1}$ for $i \in \{ 0,1,3 \}$. 
\end{cor}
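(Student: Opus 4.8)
The plan is to realize $\mathcal{K}$ as a four-cell complex obtained by three successive cell attachments along the $\R$-motivic Hopf maps, using \autoref{TodaBHopf} to make the last attachment possible, and then to read the Steenrod action off the two-cell subquotients.

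First I would take $\mathcal{K}_1 := \coneR{\sfh}$, whose cohomology $\rHR^{*,*}(\mathcal{K}_1)$ is $\bMR$-free on $x_0$ in bidegree $(0,0)$ and $x_1$ in bidegree $(1,0)$, with $\Sq^1(x_0)=x_1$; this is part of the computation of $\rHR^{*,*}(\Y^\R_{(\sfh,1)})$ in \cite{BGL}. Since the triple bracket $\langle\sfh,\eta_{1,1},\nu_{3,2}\rangle$ is defined, we in particular have $\sfh\cdot\eta_{1,1}=0$, so the map $\eta_{1,1}\colon\bS^{2,1}\to\bS^{1,0}$ lifts along the collapse $\mathcal{K}_1\to\bS^{1,0}$ to some $\widetilde\eta\colon\bS^{2,1}\to\mathcal{K}_1$; I would set $\mathcal{K}_2:=\coneR{\widetilde\eta}$. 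Then $\mathcal{K}_1\subset\mathcal{K}_2$ with quotient $\bS^{3,1}$, contributing $x_3$ in bidegree $(3,1)$, while $\mathcal{K}_2/\bS^{0,0}\simeq\Sigma^{1,0}\coneR{\eta_{1,1}}$, whose $\Sq^2$ joins its two cells; hence $\Sq^2(x_1)=x_3$ in $\mathcal{K}_2$.

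The crucial step is attaching the last cell. Forming $\mathcal{K}$ amounts to lifting $\Sigma^{3,1}\nu_{3,2}\colon\bS^{6,3}\to\bS^{3,1}$ along the collapse $q'\colon\mathcal{K}_2\to\bS^{3,1}$; the obstruction is $\partial'\circ\Sigma^{3,1}\nu_{3,2}\in[\bS^{6,3},\Sigma^{1,0}\mathcal{K}_1]$, where $\partial'$ is the connecting map, a suspension of $\widetilde\eta$. Projecting $\Sigma^{1,0}\mathcal{K}_1$ onto its top cell turns this obstruction into a suspension of $\eta_{1,1}\cdot\nu_{3,2}=0$ (again automatic since the bracket is defined), so it lifts along the bottom cell of $\Sigma^{1,0}\mathcal{K}_1$ to an element of $\pi_{5,3}(\bS^{0,0})$; by the construction of the Toda bracket, the collection of elements so obtained as $\widetilde\eta$ ranges over its possible choices is exactly the coset $\langle\sfh,\eta_{1,1},\nu_{3,2}\rangle$. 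As $0$ lies in this coset by \autoref{TodaBHopf}, I may fix a choice of $\widetilde\eta$ for which the obstruction vanishes, obtain a lift $\widetilde\nu\colon\bS^{6,3}\to\mathcal{K}_2$ of $\Sigma^{3,1}\nu_{3,2}$, and define $\mathcal{K}:=\coneR{\widetilde\nu}$. (Equivalently, one may invoke the general principle, valid in any stable homotopy category, that three maps composable to zero in pairs and with $0$ in their triple bracket assemble into a four-cell complex with those successive attaching maps; compare \cite{T71} and \cite{BE20}*{Section 3}.) Now $\mathcal{K}_2\subset\mathcal{K}$ with quotient $\bS^{7,3}$, yielding the fourth generator $x_7$ in bidegree $(7,3)$. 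All three attaching maps act trivially on $\F$-homology—indeed their targets vanish in the relevant degrees—so the Puppe sequences split and $\rHR^{*,*}(\mathcal{K})$ is $\bMR$-free on $x_0,x_1,x_3,x_7$. The relations $\Sq^1(x_0)=x_1$ and $\Sq^2(x_1)=x_3$ persist from $\mathcal{K}_2$, and since $q'\circ\widetilde\nu=\Sigma^{3,1}\nu_{3,2}$ the quotient $\mathcal{K}/\mathcal{K}_1\simeq\Sigma^{3,1}\coneR{\nu_{3,2}}$ carries a nonzero $\Sq^4$ between its two cells; as $x_7$ spans the only nonzero group $\rHR^{7,3}(\mathcal{K})$, this forces $\Sq^4(x_3)=x_7$ in $\mathcal{K}$, which completes the verification.

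I expect the main obstacle to be the bookkeeping in the third paragraph: making precise that the obstruction to the final cell attachment is genuinely an element of the \emph{triple} Toda bracket—rather than merely constrained by the pairwise composites—and that the indeterminacy arising from the non-uniqueness of $\widetilde\eta$ is exactly the room needed to cash in $0\in\langle\sfh,\eta_{1,1},\nu_{3,2}\rangle$. A secondary, routine point is to record the standard fact that the $\R$-motivic Hopf maps $\sfh$, $\eta_{1,1}$, $\nu_{3,2}$ are detected by $\Sq^1$, $\Sq^2$, $\Sq^4$ respectively, i.e.\ that these operations act nontrivially on their cofibers.
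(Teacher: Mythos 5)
Your proposal is correct and is essentially the paper's own argument: the paper simply asserts that \autoref{TodaBHopf} yields the four-cell complex, leaving implicit exactly the standard Toda-bracket cell-attachment argument you spell out (successive cofibers of $\sfh$, a coextension $\widetilde\eta$ of $\eta_{1,1}$, and the observation that the obstruction to attaching the top cell sweeps out the coset $\langle\sfh,\eta_{1,1},\nu_{3,2}\rangle$ as $\widetilde\eta$ varies). Your handling of the indeterminacy and the identification of the Steenrod action from the two-cell subquotients are both sound, so nothing further is needed.
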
 

\begin{figure}[h]
\[
\rHR^{*,*}(\mathcal{K})  = \!\! \raisebox{-3em}{
\begin{tikzpicture}\begin{scope}[ thick, every node/.style={sloped,allow upside down}, scale=0.5]
\draw (0,0)  node[inner sep=0] (v0) {} -- (0,1) node[inner sep=0] (v1) {};
\draw (0,3) node[inner sep=0] (v3) {};
\draw (0,7) node[inner sep=0] (v7) {};
 \draw[blue] (v1) to [out=150,in=-150] (v3);
 \draw [color=red] (0,3) -- (-.75,3) -- (-.75,7) -- (0,7);
\filldraw (v0) circle (2.5pt);
\filldraw (v1) circle (2.5pt);
\filldraw (v3) circle (2.5pt);
\filldraw (v7) circle (2.5pt);
\draw (0,0) node[right]{$\ x_0$};
\draw (0,1) node[right]{$\ x_1$};
\draw (0,3) node[right]{$\ x_3$};
\draw (0,7) node[right]{$\ x_7$};
\end{scope}\end{tikzpicture}}
\]
\caption{We depict the $\cAR$-structure of $\rHR^{*,*}(\mathcal{K})$ by marking the $\Sq^1$-action by black straight lines,  the $\Sq^2$-action by blue curved lines, and the $\Sq^4$-action by red lines between the $\bMR$-generators. }
\label{fig:HK}
\end{figure}

Let $e \in \Z_{(2)}[\Sigma_6]$ denote the idempotent corresponding to the Young tableaux
\[ 
\begin{tikzpicture}[scale =.5]
\draw  (0,0) -- (0,3) -- (3,3) -- (3,2) -- (2,2) -- (2,1) -- (1,1) -- (1,0) -- (0,0);
\draw (0,1) -- (1,1) -- (1,2) -- (2,2) -- (2,3);
\draw (0,2) -- (1,2) -- (1,3);
\node[] at (.5,.5) {6};
\node[] at (.5,1.5) {4};
\node[] at (.5,2.5) {1};
\node[] at (1.5,1.5) {5};
\node[] at (2.5,2.5) {3};
\node[] at (1.5,2.5) {2};
\end{tikzpicture}
\]
which is constructed as follows. Let $\Sigma_{\mr{Row}} \subset \Sigma_6$ denote the subgroup  comprised of permutations that preserve each row. Likewise, let $\Sigma_{\mr{Col}}$ denote the subgroup comprised of column-preserving permutations. 
Let 
\begin{equation} \label{RCFormula}
\sfR = \sum_{r \in \Sigma_{\mr{Row}}} r  \qquad  \text{and} \qquad  \sfC = \sum\limits_{c \in \Sigma_{\mr{Col}}} (-1)^{\mr{sign}(c)}c 
\end{equation}
   and define 
\[ e = \frac{1}{\mu} \sfR \cdot \sfC,\]
where $\mu$ is an odd integer  defined in \cite[Theorem C.1.3]{Rav}. We let $\overline{e}$ denote the resulting idempotent in $\F[\Sigma_6]$.

\begin{prop}\label{e-dimcount}
The idempotent {$\overline{e} \in \F[\Sigma_6]$} has the property that {$\overline{e} (\mr{V}^{\otimes 6}) = 0 $} if $\dim_{\F} \mr{V} < 3$ and  
\[
  \dim_{\F} \overline{e}(\mr{V}^{\otimes 6})  = \left\lbrace  
  \begin{array}{cccc}
  8 & \text{ if $\dim_{\F} \mr{V} = 3$}  \\
  64 & \text{ if $\dim_{\F} \mr{V} = 4$} 
  \end{array}
  \right. 
\]
  \end{prop}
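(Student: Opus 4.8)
The plan is to reduce the characteristic-$2$ count to a characteristic-$0$ one by lifting the idempotent to $\Z_{(2)}$, and then to read off the answer from classical Schur--Weyl duality. First I would record from \cite[Theorem~C.1.3]{Rav} that $\mu$ is odd and that $(\sfR\,\sfC)^2 = \mu\,\sfR\,\sfC$ in $\Z[\Sigma_6]$; since $\tfrac1\mu \in \Z_{(2)}$, the element $e = \tfrac1\mu\,\sfR\,\sfC$ is therefore a genuine idempotent already in $\Z_{(2)}[\Sigma_6]$, with mod-$2$ reduction $\overline{e}$. As the action of $\overline{e}$ on $\mr{V}^{\otimes 6}$ depends only on $\dim_\F \mr{V}$, it suffices to treat $\mr{V} = \F^d$; put $\widetilde{\mr{V}} = \Z_{(2)}^{\,d}$, so that $\widetilde{\mr{V}}^{\otimes 6}$ is a finitely generated free $\Z_{(2)}$-module with a $\Sigma_6$-action and $\widetilde{\mr{V}}^{\otimes 6}\otimes_{\Z_{(2)}}\F \cong \mr{V}^{\otimes 6}$ as $\F[\Sigma_6]$-modules, compatibly with $e\mapsto\overline{e}$.

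The crux is a short lattice argument. Since $e^2 = e$, we have $\widetilde{\mr{V}}^{\otimes 6} = e\,\widetilde{\mr{V}}^{\otimes 6}\oplus(1-e)\widetilde{\mr{V}}^{\otimes 6}$, and each summand, being a direct summand of a finite free module over the principal ideal domain $\Z_{(2)}$, is itself finite free. For any $\Z_{(2)}$-algebra $A$, base-changing this splitting along $\Z_{(2)}\to A$ identifies the image of $e\otimes 1_A$ acting on $\widetilde{\mr{V}}^{\otimes 6}\otimes_{\Z_{(2)}}A$ with $(e\,\widetilde{\mr{V}}^{\otimes 6})\otimes_{\Z_{(2)}}A$. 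Applying this with $A = \F$ and $A = \Q$ gives
\[
\dim_\F \overline{e}\bigl(\mr{V}^{\otimes 6}\bigr)
\;=\; \operatorname{rank}_{\Z_{(2)}} e\,\widetilde{\mr{V}}^{\otimes 6}
\;=\; \dim_\Q e\bigl((\Q^d)^{\otimes 6}\bigr),
\]
so the mod-$2$ dimension agrees with the rational one and the prime $2$ plays no further role beyond the oddness of $\mu$.

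To finish, I would compute the rational dimension using classical invariant theory. The element $e$ is, up to the unit $\tfrac1\mu$, the Young symmetrizer of the partition $\lambda = (3,2,1)$ (the shape of the tableau defining $e$), so by Schur--Weyl duality $e\bigl((\Q^d)^{\otimes 6}\bigr)$ is the Schur functor $\mathbb{S}_\lambda(\Q^d)$. This vanishes when $d < \ell(\lambda) = 3$, and for $d \ge 3$ has dimension equal to the number of semistandard Young tableaux of shape $\lambda$ with entries in $\{1,\dots,d\}$, i.e.\ $s_\lambda(1^d) = \prod_{(i,j)\in\lambda}(d+j-i)/h(i,j)$ by the hook--content formula. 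With hook lengths $5,3,1,3,1,1$ and contents $0,1,2,-1,0,-2$ this yields $s_\lambda(1^3) = 360/45 = 8$ and $s_\lambda(1^4) = 2880/45 = 64$; as a sanity check, $\mathbb{S}_{(3,2,1)}(\Q^3)\cong\mathbb{S}_{(2,1)}(\Q^3)\otimes\Lambda^3\Q^3$ is the adjoint representation of $\mathrm{SL}_3$, which is indeed $8$-dimensional. I would also note that $(3,2,1)$ is self-conjugate, so the conclusion is insensitive to whether one defines the symmetrizer as $\sfR\,\sfC$ or $\sfC\,\sfR$. The \emph{only} genuinely subtle point is the middle paragraph --- checking that reduction modulo $2$ does not shrink the $e$-summand --- and this is exactly what the freeness of idempotent images over $\Z_{(2)}$ provides; everything else is bookkeeping plus a standard dimension formula.
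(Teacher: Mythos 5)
Your proof is correct, but it reaches the count by a genuinely different route from the paper. The paper works entirely in characteristic $2$ and by hand: it notes that $\overline{\sfC}$ already kills $\mr{V}^{\otimes 6}$ when $\dim_\F \mr{V}\le 2$, exhibits an explicit list of eight images of numbered Young diagrams as a basis when $\dim_\F \mr{V}=3$, and for $\dim_\F \mr{V}=4$ organizes the count as $32+8+24=64$ according to which basis vectors occur and repeat. You instead use only that $\mu$ is odd to see that $e$ is an idempotent in $\Z_{(2)}[\Sigma_6]$, so the image of $e$ on $(\Z_{(2)}^{\,d})^{\otimes 6}$ is a free direct summand whose formation commutes with base change to $\F$ and to $\mathbb{Q}$; this reduces the mod-$2$ dimension to the rational one, which is the dimension of the Schur functor $\mathbb{S}_{(3,2,1)}(\mathbb{Q}^d)$ since $\sfR\,\sfC$ is the Young symmetrizer of $(3,2,1)$, and the hook content formula gives $0,8,64$ for $d\le 2$, $d=3$, $d=4$. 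Your numerics check (hooks $5,3,1,3,1,1$, contents $0,1,2,-1,0,-2$), the lattice step is the standard splitting $\mr{M}=e\mr{M}\oplus(1-e)\mr{M}$, and the remark that $(3,2,1)$ is self-conjugate indeed makes the $\sfR\sfC$ versus $\sfC\sfR$ convention immaterial. What your approach buys is a cleaner and more conceptual argument that in particular certifies the linear independence of the paper's listed elements, which the paper asserts essentially by inspection, and it generalizes to any tableau whose $\mu$ is prime to $2$. What it gives up is the explicit $\F$-basis of $\overline{e}(\mr{V}^{\otimes 6})$ in terms of numbered Young diagrams, which is harmless for this proposition but is the concrete bookkeeping the paper continues to exploit afterwards (e.g.\ in locating the generator $\iota$ and computing the action of $\tilde{\mr{Q}}_2^\R$ on it).
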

  
  \begin{pf}
 Let $\overline{\sfR}$ and $\overline{\sfC}$ denote the images of $\sfR$ and $\sfC$ in $\F[\Sigma_6]$, respectively. Then $\overline{e} = \overline{\sfR} \cdot \overline{\sfC}$. It is straightforward that $\overline{\sfC}$ vanishes on $\mr{V}^{\otimes 6}$ if $\dim \mr{V} \leq 2$. 
 
 Now suppose that $\mr{V}$ has basis $\{a,b,c\}$. Then a basis for $\overline{e}( \mr{V}^{\otimes 6})$ is given by
 \[
\left\{
\overline{e} \left( \hspace{-1ex}\raisebox{-2ex}{\Young{a}{b}{c}{b}{c}{c} } \hspace{-2ex}\right), 
\overline{e} \left( \hspace{-1ex}\raisebox{-2ex}{\Young{b}{a}{c}{a}{c}{c} } \hspace{-2ex}\right), 
\overline{e} \left( \hspace{-1ex}\raisebox{-2ex}{\Young acbcbb } \hspace{-2ex}\right), 
\overline{e} \left( \hspace{-1ex}\raisebox{-2ex}{\Young cababb } \hspace{-2ex}\right), 
\overline{e} \left( \hspace{-1ex}\raisebox{-2ex}{\Young cbabaa } \hspace{-2ex}\right), 
\overline{e} \left( \hspace{-1ex}\raisebox{-2ex}{\Young bcacaa } \hspace{-2ex}\right), 
 \right.
 \]
 \[
 \left.
\overline{e} \left( \hspace{-1ex}\raisebox{-2ex}{\Young abcbac } \hspace{-2ex}\right), 
\overline{e} \left( \hspace{-1ex}\raisebox{-2ex}{\Young acbcab } \hspace{-2ex}\right)
 \right\}.
 \]
 
 Finally, suppose that $\dim \mr{V} = 4$ with basis $\{a,b,c,d\}$. For any subspace $\mr{W} \subset \mr{V}$ spanned by three of these basis elements, the space $\overline{e}( \mr{W}^{\otimes 6})$ has dimension 8, as we have just seen. There are $4$  choices of $\mr{W}$, which together yield a 32-dimensional subspace of $\mr{V}^{\otimes 6}$. Now consider Young tableaux in which all four basis elements appear and only one is repeated. In the case that $d$ is repeated, we generate only two independent elements:
 \[
\overline{e} \left( \hspace{-1ex}\raisebox{-2ex}{\Young acdbdd } \hspace{-2ex}\right) \quad \text{and} \quad 
\overline{e} \left( \hspace{-1ex}\raisebox{-2ex}{\Young cadbdd } \hspace{-2ex}\right).
 \]
 Allowing any basis element to be the repeating one, this gives an 8-dimensional subspace.
 Finally, we consider Young tableaux in which all four basis elements appear and two are repeated. In the case that $c$ and $d$ are repeated, we have  the four elements
 \[
\overline{e} \left( \hspace{-1ex}\raisebox{-2ex}{\Young acdbdc } \hspace{-2ex}\right), 
\overline{e} \left( \hspace{-1ex}\raisebox{-2ex}{\Young acdcbd } \hspace{-2ex}\right), 
\overline{e} \left( \hspace{-1ex}\raisebox{-2ex}{\Young bdcbcd } \hspace{-2ex}\right), \quad \text{and} \quad
\overline{e} \left( \hspace{-1ex}\raisebox{-2ex}{\Young adcdbc } \hspace{-2ex}\right). 
\]
As there are $\binom{4}{2}=6$ such choices, this contributes another subspace of dimension $4\cdot 6 = 24$.
  \end{pf}

  We define 
  \[ 
  \cAR_2 := \Sigma^{-5, -1} e(\mathcal{K}^{\sma 6}) = \Sigma^{-5,-1} (\mr{hocolim} \  \{ \mathcal{K}^{\sma 6} \overset{e}{\to} \mathcal{K}^{\sma 6} \overset{e}{\to} \dots   \}), 
  \]
 which is a  split summand of $\Sigma^{-5, -1}\mathcal{K}^{\sma 6}$ as $e$ is an idempotent. We shift the grading by $(-5,-1)$ to make sure that the $\cA^\R(2)$-module generator of  $\rHR^{*,*}(\cAR_2)$ is in $(0,0)$ (see \autoref{lowestD}).  

  \begin{thm} \label{A2free} $\rHR^{*,*}(\cA_2^\R) \cong \cAR(2)$ as an $\cAR(2)$-module. 
  \end{thm}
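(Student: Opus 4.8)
The plan is to reproduce Smith's classical argument (\cite{Rav}*{Appendix C}, \cite{BE20}) in the $\R$-motivic setting, with \autoref{e-dimcount} playing the role of the classical cell count and with the classical identification $\rH^*(\cA_2)\cong\mr{A}(2)$ imported along a realization functor to pin down the $\cAR(2)$-action.

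First I would compute the $\bMR$-module underlying $\rHR^{*,*}(\cA_2^\R)$. Since $\rHR^{*,*}(\mathcal{K})$ is $\bMR$-free (\autoref{Kexists}), the Künneth theorem gives a $\Sigma_6$-equivariant isomorphism $\rHR^{*,*}(\mathcal{K}^{\sma 6})\cong\rHR^{*,*}(\mathcal{K})^{\otimes_{\bMR}6}$, where, there being no signs in characteristic $2$, $\Sigma_6$ acts by permuting tensor factors; hence $\rHR^{*,*}(\cA_2^\R)\cong\Sigma^{-5,-1}\,\overline{e}\big(\rHR^{*,*}(\mathcal{K})^{\otimes 6}\big)$. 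Because $\rHR^{*,*}(\mathcal{K})$ is $\bMR$-free of rank $4$, the linear-algebra computation in the proof of \autoref{e-dimcount} applies verbatim over $\bMR$ (the displayed elements form a $\bMR$-basis), so $\rHR^{*,*}(\cA_2^\R)$ is $\bMR$-free of rank $64$. Tracking the bidegree of the tableau basis element with content $x_0^{n_0}x_1^{n_1}x_3^{n_3}x_7^{n_7}$ — namely $(n_1+3n_3+7n_7-5,\ n_3+3n_7-1)$ after the shift — shows there is a unique $\bMR$-generator $g$ in bidegree $(0,0)$ (this is \autoref{lowestD}), and, comparing with the bidegrees $(\textstyle\sum i_k,\ \sum\lfloor i_k/2\rfloor)$ of the admissible monomials $\Sq^{i_1}\!\cdots\Sq^{i_r}$ in $\Sq^1,\Sq^2,\Sq^4$, that the multiset of bidegrees of the $64$ $\bMR$-generators of $\rHR^{*,*}(\cA_2^\R)$ coincides with the multiset of bidegrees of the $64$ admissible monomials forming a $\bMR$-basis of $\cAR(2)$ (the latter freeness being Voevodsky's computation together with the $\R$-motivic Adem relations).

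Next I would define the comparison map $\phi\colon\cAR(2)\to\rHR^{*,*}(\cA_2^\R)$ by $\phi(a)=a\cdot g$; it is a map of left $\cAR(2)$-modules. By the previous paragraph $\phi$ is a map between $\bMR$-free modules of rank $64$ whose $\bMR$-generators sit in matching bidegrees, so with respect to such bases $\phi$ is given by a bidegree-preserving $64\times 64$ matrix over $\bMR$; its determinant therefore lies in $\bMR^{0,0}=\F$, and $\phi$ is an isomorphism provided that determinant is nonzero, which can be tested after any ring map out of $\bMR$. Reducing modulo $\rho$ and using $\cAR/(\rho)\cong\cA^\C$ produces a map $\overline{\phi}\colon\cA^\C(2)\to\rHR^{*,*}(\cA_2^\R)/(\rho)$ of $\cA^\C(2)$-modules; composing further with complex Betti realization (equivalently, inverting $\tau$) identifies $\cA_2^\R$ with the classical spectrum $\cA_2=\Sigma^{-5}e(\mathcal{K}_{\mathrm{cl}}^{\sma 6})$ of \cite{BE20} — the correction terms in the $\R$-motivic Adem relations all carry a factor of $\rho$ and so vanish — and identifies $\overline{\phi}$ with the classical isomorphism $\mr{A}(2)\xrightarrow{\cong}\rH^*(\cA_2)$ of Smith's theorem. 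Hence $\phi$ is an isomorphism, which gives $\rHR^{*,*}(\cA_2^\R)\cong\cAR(2)$ as $\cAR(2)$-modules.

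The main obstacle is the step that turns a count into an identification. Two points need care: first, that the $\Sigma_6$-action and the idempotent $\overline{e}$ are compatible with the $\bMR$-structure so that \autoref{e-dimcount} literally computes $\rHR^{*,*}(\cA_2^\R)$, which uses freeness of $\rHR^{*,*}(\mathcal{K})$ and characteristic $2$; and second, the bidegree bookkeeping showing the $64$ $\bMR$-generators of $\rHR^{*,*}(\cA_2^\R)$ occupy exactly the bidegrees of the admissible-monomial basis of $\cAR(2)$, which is what reduces the isomorphism statement to its classical shadow. Everything else is Smith's classical argument transported along realization.
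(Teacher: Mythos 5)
Your route is genuinely different from the paper's. The paper never writes down a comparison map at all: it invokes the Margolis-homology freeness criterion of \cite{BGL}*{Corollary~2.7} --- $\rHR^{*,*}(\cA_2^\R)$ is $\bMR$-free because it is a summand of the free module $\mr{K}_\R^{\otimes_{\bMR}6}$ over the graded local ring $\bMR$, and $\mathcal{M}(\rHR^{*,*}(\cA_2^\R)\otimes_{\bMR}\F,x)\cong\overline{e}\bigl(\mathcal{M}(\mr{K},x)^{\otimes 6}\bigr)=0$ for the five relevant $x$, since each $\mathcal{M}(\mr{K},x)$ is $2$-dimensional and \autoref{e-dimcount} says $\overline{e}$ kills sixth tensor powers of spaces of dimension $<3$; the rank count $64$ then forces rank one over $\cAR(2)$. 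Your plan (bottom-class map $a\mapsto a\cdot g$, degree rigidity, determinant in $\F$, base change to the classical Smith isomorphism) could in principle be completed, but as written it has two real gaps.

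First, the entire determinant mechanism rests on the asserted coincidence of the multisets of bidegrees of the $64$ $\bMR$-generators of $\Sigma^{-5,-1}\overline{e}(\mr{K}_\R^{\otimes 6})$ and of $\cAR(2)$. Without that coincidence the determinant need not lie in $\bMR^{0,0}=\F$ --- it could, for instance, be a power of $\tau$ --- and then testing invertibility after killing $\rho$ and inverting $\tau$ proves nothing, since $\tau$ becomes a unit under exactly that base change. You never carry out this $64$-against-$64$ enumeration; it requires tracking the motivic weights (precisely the information invisible classically), and its truth is essentially equivalent to the theorem being proved, so it cannot be dismissed as bookkeeping. Second, the classical input is not where you place it: \cite{BE20} constructs a spectrum realizing $\mathcal{B}(2)=\cA(2)\otimes_{\Lambda(\mr{Q}_2)}\F$ by Toda realization, not a spectrum with cohomology $\cA(2)$, and \cite{Rav}*{Appendix~C} gives the general Smith machinery rather than the specific statement $\rH^{*}(\Sigma^{-5}e(\mathcal{K}_{\mathrm{cl}}^{\sma 6}))\cong\cA(2)$. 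That statement is true, but to have it one runs the Margolis-homology argument classically (vanishing of $\mathcal{M}(-,\mr{Q}_0)$, etc., via \autoref{e-dimcount}) --- which is exactly the computation the paper performs motivically, and which applies verbatim since the extra $\rho$- and $\tau$-terms do not affect primitivity mod $(\rho,\tau)$. So the detour through base change does not avoid that work, and it still leaves the bidegree match to verify. A minor further point: the claim that the Adem correction terms ``all carry a factor of $\rho$'' is false ($\Sq^2\Sq^2=\tau\Sq^3\Sq^1$), though this is harmless once $\tau$ is inverted.
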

  
  \begin{proof} By \cite[Corollary 2.7]{BGL},   $\rHR^{*,*}(\cA_2^\R)$ is a free $\cAR(2)$-module
if and only if $\rHR^{*,*}(\cA_2^\R)$ is free as an $\mathbb{M}_2^\R$-module and 
the Margolis homology  $\mathcal{M}( \rHR^{*,*}(\cA_2^\R )\otimes_{\mathbb{M}_2^{\R}} \F, x)$ vanishes for {$x \in \{ \mr{Q}_0^\R, \mr{Q}_1^\R, \overline{\mr{P}}_1^1, \mr{Q}_2^\R, \overline{\mr{P}}_2^1  \}$}, 
  where {$\overline{\mr{P}}_1^1$ and $\overline{\mr{P}}_2^1$ are the elements in $\cAR$ dual to $\upxi_1$ and $\upxi_2$, respectively. }  
    
  Let $\mr{K}_{\R}:= \rHR^{*,*}(\mathcal{K})$. The $\cAR$-module $\rHR^{*,*}(\cA_2^\R)$ is $\bMR$-projective as it is a summand of 
  \[
   \rHR^{*,*}(\Sigma^{-5}\mathcal{K}^{\sma 6}) \cong \Sigma^{-5}\mr{K}_\R^{\otimes_{\bMR} 6 }, 
  \]
  which is $\bMR$-free. However, $\bMR$ is a graded local ring, and over a local ring, being projective is equivalent to being free. Hence,  $\rHR^{*,*}(\cA_2^\R)$ is $\bMR$-free. Since {$\mr{Q}_0^\R, \mr{Q}_1^\R, \mr{Q}_2^\R, \overline{\mr{P}}_1^1,$ and $\overline{\mr{P}}_2^1$} are primitive modulo $(\rho, \tau)$,  {and for $\mr{K} := \mr{K}_\R\otimes_{\bMR} \F$, $i \in \{ 0,1, 2\}$ and $t \in \{ 1,2 \}$}
  \[ \dim_{\F}\mathcal{M}(\mr{K}, \mr{Q}_i^\R) = 2 = \dim_{\F}\mathcal{M}(\mr{K},\overline{\mr{P}}_t^1 ) ,\]
it follows from \autoref{e-dimcount}  that 
\[  \mathcal{M}(\rHR^{*,*}(\cAR_2)\otimes_{\bMR} \F, x)  \cong  \mathcal{M}(\overline{e}(\mr{K}^{\otimes 6}), x) \cong  \overline{e}(\mathcal{M}(\mr{K}, x)^{\otimes 6})   =0  \]
for $x \in \{ {\mr{Q}_0^\R} ,\mr{Q}_1^\R, \overline{\mr{P}}_1^1, \mr{Q}_2^\R, \overline{\mr{P}}_2^1 \}$. 
  Thus, $\rHR^{*,*}(\cA_2^\R)$ is free over $\cAR(2)$.   \autoref{e-dimcount} also implies that the $\bMR$-rank of $\rHR^{*,*}(\cA_2^\R)$ is   64,   and therefore 
   $ \rHR^{*,*}(\cA_2^\R)$ has rank 1 over $\cAR(2)$. 
  \end{proof}
  
  \subsection{An $\R$-motivic lift of $\mathcal{B}(2)$ }
  
   Let  $\tilde{\mr{Q}}_2^\R = [\Sq^4, \mr{Q}_1^\R ]$. Unlike the classical Steenrod algebra, $\mr{Q}_2^\R$ does not agree with $\tilde{\mr{Q}}_2^\R$. Instead, as in \cite{V}*{Example~13.7}, these are related by the formula 
  \[ \mr{Q}_2^\R = [\Sq^4, \mr{Q}_1^\R ] + \rho \Sq^5 \Sq^1. \]
  However, one can check that both $\mr{Q}_2^\R$ and $\tilde{\mr{Q}}_2^\R$ square to zero, hence generate exterior algebras. We define (left) $\cAR(2)$-modules
  \[ \mathcal{B}^\R(2)  :=  \cAR(2) \otimes_{\Lambda(\mr{Q}_2^\R)} \bMR \]
  and
  \begin{equation} \label{defn:B2tilde}
  \tilde{\mathcal{B}}^\R(2)  :=  \cAR(2) \otimes_{\Lambda(\tilde{\mr{Q}}_2^\R)} \bMR. 
  \end{equation}
  Let $\mr{A}_{2}^{\R}$ denote $\rHR^{*,*}(\cAR_2)$.  It is easy to check that the left ideal generated by $\mr{Q}_2^\R$ (likewise $\tilde{\mr{Q}}_2^\R$) in $\cA^\R(2)$ is isomorphic to $\Sigma^{7,3}\mathcal{B}^\R(2)$ (likewise $\Sigma^{7,3}\tilde{\mathcal{B}}^\R(2)$).
  It follows that there is an exact sequence of $\cAR(2)$-modules 
   \begin{equation} \label{exactAB}
  \begin{tikzcd}
  0 \rar & \Sigma^{7,3} \mr{B}_\R \rar[hook, "\iota"]& \mr{A}_2^\R  \rar[two heads, "\pi"] &   \mr{B}_\R \rar & 0,
  \end{tikzcd}
  \end{equation}
 where $\mr{B}_\R$ is either  $\mathcal{B}^\R(2)$ or  $\tilde{ \mathcal{B}}^\R(2)$. The main purpose of this subsection is to show that:
 \begin{lem} \label{AmoduleLift} There exists an exact sequence of $\cAR$-modules whose underlying $\cAR(2)$-module exact sequence is isomorphic to \eqref{exactAB} with  $\mr{B}_\R \cong  \tilde{ \mathcal{B}}^\R(2)$. 
 \end{lem}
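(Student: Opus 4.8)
The strategy is to promote the $\cAR(2)$-module exact sequence \eqref{exactAB} (with $\mr{B}_\R \cong \tilde{\mathcal B}^\R(2)$) to an exact sequence of full $\cAR$-modules, realize it topologically via the Toda machinery of \autoref{Sec:Toda}, and read off the algebraic statement from the topology. The first step is to equip $\tilde{\mathcal B}^\R(2) = \cAR(2)\otimes_{\Lambda(\tilde{\mr Q}_2^\R)}\bMR$ with an $\cAR$-module structure, or rather to produce an $\cAR$-module $\mr M$ whose underlying $\cAR(2)$-module is $\tilde{\mathcal B}^\R(2)$. Because $\tilde{\mr Q}_2^\R = [\Sq^4,\mr Q_1^\R]$ is \emph{itself} an element of $\cAR(2)$ (unlike $\mr Q_2^\R$, which differs by $\rho\Sq^5\Sq^1$), the quotient $\tilde{\mathcal B}^\R(2)$ is a cyclic $\cAR(2)$-module, hence a cyclic $\bMR$-free finite module; I would then check that the higher $\Sq^i$ ($i \geq 8$) can be defined on the cyclic generator compatibly with the Adem relations — or, more efficiently, bypass this by constructing the realizing \emph{spectrum} directly and defining the $\cAR$-module as its cohomology.

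**Topological realization.** For the existence of such a spectrum I would invoke one of the weak Toda realization theorems, most naturally \autoref{wTRT3}, applied to $\mr M$ a cyclic $\bMR$-free $\cAR$-module lifting $\tilde{\mathcal B}^\R(2)$. This requires checking the vanishing of ${}^{\mr{May}}\mr E_{1,\mr M/(\rho)}^{s-2+i,f,w+i,\ast}$ for $f\geq 3$, $i\geq 0$, $(s,w)\in\mathcal D_{\mr M}$. Since $\mr M/(\rho) \cong \mathcal B^\C(2) := \cA^\C(2)\otimes_{\Lambda(\mr Q_2^\C)}\bM^\C$ (the $\rho$-reduction collapses the distinction between $\mr Q_2$ and $\tilde{\mr Q}_2$), its associated graded is an exterior algebra on $\upxi_{1,0},\upxi_{1,1},\upxi_{2,0},\upxi_{2,1},\upxi_{3,0}$ modulo the ideal generated by $\mr Q_2$'s graded leading term, so the May $\mr E_1$ is an explicit polynomial ring quotient whose $(s,f,w)$-degrees can be tabulated; the cell dimensions of $\cAR(2)$ are $0$ through $23$ (ignoring $\rho$-multiples), so $\mathcal D_{\mr M}$ is finite and bounded, and the vanishing is a finite check entirely parallel to the proof of \autoref{thm:128R} and \autoref{uniqueB}. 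This produces $\rX\in\SpfinR$ with $\rHR^{*,*}(\rX)\cong \mr M$, and in particular equips $\tilde{\mathcal B}^\R(2)$ with an $\cAR$-module structure.

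**From realization to the exact sequence.** Having $\cAR_2$ (from \autoref{A2free}) and the spectrum $\rX$ realizing $\mr M = $ (an $\cAR$-lift of) $\tilde{\mathcal B}^\R(2)$, I would produce the cofiber sequence by showing that the $\cAR(2)$-module surjection $\pi\colon \mr A_2^\R\twoheadrightarrow\tilde{\mathcal B}^\R(2)$ lifts to $\cAR$ and then to spectra. Concretely: the surjection $\pi$ is detected by a class in $\Ext^{0,0,0}_{\cAR}(\mr A_2^\R, \mr M)$; one checks this class survives the $\R$-motivic Adams spectral sequence converging to $[\cAR_2, \rX]$ by verifying $\mr E_2^{-1,f,0}=0$ for $f\geq 2$, again via the $\rho$-Bockstein and algebraic Atiyah–Hirzebruch spectral sequences exactly as in the proof of \autoref{thm:exactcofiber}. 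The resulting map of spectra has a cofiber whose cohomology fits into a short exact sequence of $\cAR$-modules; identifying the fiber term with $\Sigma^{7,3}$ of (an $\cAR$-lift of) $\tilde{\mathcal B}^\R(2)$ is forced by \autoref{list}-style degree bookkeeping, since the left ideal generated by $\tilde{\mr Q}_2^\R$ in $\cAR(2)$ is $\Sigma^{7,3}\tilde{\mathcal B}^\R(2)$ and the $\cAR$-module extension is determined in the relevant range.

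**Main obstacle.** The hard part is not the Toda/Adams vanishing estimates — those are routine finite checks of the same flavor already carried out in the paper — but rather verifying that the $\cAR(2)$-module map in \eqref{exactAB} actually \emph{is} a map of $\cAR$-modules for the chosen lifts, i.e. that the higher $\Sq^i$-actions are compatible with $\iota$ and $\pi$. The cleanest route, which I would adopt, is to sidestep this entirely: realize \emph{both} $\cAR_2$ and the quotient candidate topologically and independently, then show the connecting map exists at the level of spectra via the Adams spectral sequence argument above; the exactness and the $\cAR$-module structure of all three terms then come for free from the cofiber sequence, and one only needs the underlying $\cAR(2)$-module identification, which is \eqref{exactAB} by \autoref{A2free} together with the observation that $\tilde{\mr Q}_2^\R\in\cAR(2)$ makes $\tilde{\mathcal B}^\R(2)$ a well-defined cyclic $\cAR(2)$-module of rank $32$ over $\bMR$.
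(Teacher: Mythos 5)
There is a genuine gap, and it is precisely at the point you flag as the ``main obstacle'' and then claim to sidestep. The sidestep is circular. Every weak Toda realization theorem in \autoref{Sec:Toda} takes as \emph{input} an $\cAR$-module that is $\bMR$-free and finite; so to ``realize the quotient candidate topologically and independently'' you must first produce an $\cAR$-module structure on $\tilde{\mathcal{B}}^\R(2)$. Worse, to run your Adams spectral sequence step you need the surjection $\pi\colon \mr{A}_2^\R \twoheadrightarrow \tilde{\mathcal{B}}^\R(2)$ to define a class in $\Ext^{0,0,0}_{\cAR}$, which requires $\pi$ to already be a map of $\cAR$-modules for the chosen lift --- but the $\cAR$-linearity of $\pi$ (equivalently, the $\cAR$-lift of the exact sequence \eqref{exactAB}) is exactly the content of \autoref{AmoduleLift}. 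So the cofiber-sequence argument cannot deliver the lemma ``for free''; it presupposes it. Note also that the lemma is purely algebraic and needs no Toda or Adams input at all: the middle term $\mr{A}_2^\R$ carries a canonical $\cAR$-structure because it is the cohomology of the already-constructed spectrum $\cAR_2$, and what must be shown is that the $\cAR(2)$-submodule of $\mr{A}_2^\R$ generated by $\tilde{\mr{Q}}_2^\R\cdot\iota$ (the image of $\Sigma^{7,3}\tilde{\mathcal{B}}^\R(2)$) is closed under the full $\cAR$-action; then the quotient inherits an $\cAR$-structure and both maps are automatically $\cAR$-linear.

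That closure statement is the irreducible computation your proposal never carries out or even isolates. The paper's route: since $\Sq^1,\Sq^2,\Sq^4\in\cAR(2)$, only the $\Sq^{2^i}$ with $i\geq 3$ can take you out of the submodule, and by the degree bookkeeping of \autoref{degBA2} the only cases to check are $\Sq^8$ applied to $\tilde{\mr{Q}}_2^\R\cdot\iota$, $\Sq^1\tilde{\mr{Q}}_2^\R\cdot\iota$, and $\Sq^2\tilde{\mr{Q}}_2^\R\cdot\iota$; these three are verified by explicit Adem-relation identities evaluated via the idempotent/Young-tableaux calculus in $\overline{e}(\mr{K}_\R^{\otimes 6})$ (and \autoref{NotQ2R} shows the analogous check genuinely fails for $\mr{Q}_2^\R$, so it is not a formality). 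Your first, non-bypassed alternative --- defining $\Sq^8,\Sq^{16}$ on the cyclic generator of $\tilde{\mathcal{B}}^\R(2)$ abstractly and checking Adem relations --- would at best give \emph{some} $\cAR$-structure on the quotient, not the exact sequence of $\cAR$-modules the lemma demands, since compatibility of $\iota$ and $\pi$ again reduces to the same closure check inside $\mr{A}_2^\R$. The May/$\rho$-Bockstein vanishing estimates you describe are the right tool later, for realizing $\tilde{\mathcal{B}}^\R(2)$ as $\mathcal{Z}_\R$ in \autoref{thm:Z}, but they do not bear on this lemma.
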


 \begin{rmk} \label{notQ2R}
 In the case of  $\mr{B}_\R = \mathcal{B}^\R(2)$, the image of $\Sigma^{7,1} \mathcal{B}^\R(2) \rtarr \mr{A}_2^\R$ is a sub-$\cAR(2)$-module, but not a sub-$\cAR$-module. See \autoref{NotQ2R} for more details.
\end{rmk}

\autoref{AmoduleLift} and \autoref{notQ2R} are direct consequences of the $\cAR$-module structure of $\mr{A}_2^\R$ which can be deduced from the injection 
\[ 
\begin{tikzcd}
\Sigma^{5,1}\mr{A}^\R_2 \rar[hook] & \mr{K}_{\R}^{\otimes_{\bMR} 6 }, 
\end{tikzcd}
\]
 where $\mr{K}_\R = \rHR^{*,*} (\mathcal{K}_\R)$. We do not want to entirely leave this calculation to the reader because, without a few tricks, this calculation is likely to  require computer assistance as $e$ has $ 144$ elements in its expression (in terms of the standard $\F$-generators of $\F[\Sigma_3]$)  and $\mr{K}_{\R}^{\otimes_{\bMR} 6 }$  has $2^{12}$ elements in its $\bMR$-basis. We begin after setting the following notation.
 \begin{notn} Let $x_i$ denote the $\bMR$-generators of $\mr{K}_\R$ in degree $i$ as in \autoref{Kexists}.  
 We use the numbered Young diagram (abbrev. NYD)
 \[ 
\begin{tikzpicture}[scale =.4]
\draw  (0,0) -- (0,3) -- (3,3) -- (3,2) -- (2,2) -- (2,1) -- (1,1) -- (1,0) -- (0,0);
\draw (0,1) -- (1,1) -- (1,2) -- (2,2) -- (2,3);
\draw (0,2) -- (1,2) -- (1,3);
\node[] at (.5,.5) {$i_6$};
\node[] at (.5,1.5) {$i_4$};
\node[] at (.5,2.5) {$i_1$};
\node[] at (1.5,1.5) {$i_5$};
\node[] at (2.5,2.5) {$i_3$};
\node[] at (1.5,2.5) {$i_2$};
\end{tikzpicture}
\]
  to denote the $\bMR$-basis element $x_{i_1} \otimes \dots \otimes x_{i_6} \in  \mr{K}_{\R}^{\otimes_{\bMR} 6 }$, where $i_j \in \{ 0,1,3, 7 \}$.   
 \end{notn} 

As in \autoref{e-dimcount},  let $\overline{\sfR}$ and $\overline{\sfC}$ denote the images of $\sfR$ and $\sfC$ (see \eqref{RCFormula}) in $\F[\Sigma_6]$, respectively.  Since $\overline{e} = \overline{\sfR} \cdot \overline{\sfC}$, we record a few properties of $\overline{\sfR}$  and $\overline{\sfC}$.
Note that $\overline{\sfR}$ annihilates an NYD if it has repeating digits in a row. Likewise,  $\overline{\sfC}$ annihilates an NYD if there are repeating digits in a column. For instance, 
\[ 
\overline{\sfR}(\raisebox{-1em}{\Young{0}{1}{0}{3}{7}{3}}) = 0 = \overline{\sfC}(\raisebox{-1em}{\Young{0}{1}{0}{3}{7}{3}}). 
\]
\begin{rmk} \label{lowestD}
 The lowest degree NYD which is not annihilated by $\overline{e}$ is  
  \[ 
\begin{tikzpicture}[scale =.3]
\draw  (0,0) -- (0,3) -- (3,3) -- (3,2) -- (2,2) -- (2,1) -- (1,1) -- (1,0) -- (0,0);
\draw (0,1) -- (1,1) -- (1,2) -- (2,2) -- (2,3);
\draw (0,2) -- (1,2) -- (1,3);
\node[] at (.5,.5) {3};
\node[] at (.5,1.5) {1};
\node[] at (.5,2.5) {0};
\node[] at (1.5,1.5) {1};
\node[] at (2.5,2.5) {0};
\node[] at (1.5,2.5) {0};
\end{tikzpicture}
\]
 which lives in degree $(5,1)$. Of course, there are multiple NYD's in bidegree $(5,1)$ not annihilated by $\overline{e}$ but their images are the same. Likewise, the NYD of the highest degree not annihilated by $\overline{e}$ is 
 \[ 
\begin{tikzpicture}[scale =.3]
\draw  (0,0) -- (0,3) -- (3,3) -- (3,2) -- (2,2) -- (2,1) -- (1,1) -- (1,0) -- (0,0);
\draw (0,1) -- (1,1) -- (1,2) -- (2,2) -- (2,3);
\draw (0,2) -- (1,2) -- (1,3);
\node[] at (.5,.5) {1};
\node[] at (.5,1.5) {3};
\node[] at (.5,2.5) {7};
\node[] at (1.5,1.5) {3};
\node[] at (2.5,2.5) {7};
\node[] at (1.5,2.5) {7};
\end{tikzpicture}
\]
which lives in bidegree $(28, 11).$
  \end{rmk}
  The  lowest degree element  $ \iota := \overline{e} (\raisebox{-1em}{\Young{0}{0}{0}{1}{1}{3}})$, which serves as the $\cAR$-module generator of $\Sigma^{5,1}\mr{A}_2^\R$, can also be expressed as 
   \[ \iota = \overline{\sfR}( \raisebox{-1em}{\Young{3}{1}{0}{1}{0}{0}} ) \]
  because the other NYDs present in the expression $\overline{\sfC}(\raisebox{-1em}{\Young{0}{0}{0}{1}{1}{3}})$ are annihilated by $\overline{\sfR}$.  
  
 Since the $\R$-motivic Steenrod algebra is cocommutative we {get}
 \[ \overline{\sfR}(\overline{\sfC}(\Sq^i(-))) = \overline{\sfR}(\Sq^i(\overline{\sfC} (-))) = \Sq^i (\overline{\sfR}(\overline{\sfC}(-))).  \]
 This, along with the Cartan formula, allows us to calculate $a \cdot \iota$ for any $a \in \cAR$, fairly easily. For example, 
\begin{eqnarray*}
 \Sq^1 \cdot \iota  &=& \overline{\sfR} (\Sq^1 \raisebox{-1em}{\Young{3}{1}{0}{1}{0}{0}  } )  \\
 &= & \overline{\sfR} ( \raisebox{-1em}{\cYoung{3}{1}{1}{1}{0}{0}  }+ \raisebox{-1em}{\cYoung{3}{1}{0}{1}{1}{0}  }+ \raisebox{-1em}{\Young{3}{1}{0}{1}{0}{1}  }) \\
  &= & \overline{\sfR} (\raisebox{-1em}{\Young{3}{1}{0}{1}{0}{1}  }), \\
  \Sq^2  \cdot \iota  &=& \overline{\sfR} (\Sq^2 \raisebox{-1em}{\Young{3}{1}{0}{1}{0}{0}  } )  \\
 &= & \overline{\sfR} ( \raisebox{-1em}{\cYoung{3}{3}{0}{1}{0}{0}  }+  \raisebox{-1em}{\Young{3}{1}{0}{3}{0}{0}  }   + \tau(\raisebox{-1em}{\cYoung{3}{1}{1}{1}{1}{0}  }+ \raisebox{-1em}{\cYoung{3}{1}{1}{1}{0}{1}  } + \raisebox{-1em}{\cYoung{3}{1}{0}{1}{1}{1}  })   ) \\
  &= & \ \overline{\sfR} (\raisebox{-1em}{\Young{3}{1}{0}{3}{0}{0}  }), \\
\Sq^4 \cdot \iota &=& \overline{\sfR}(\Sq^4 ( \raisebox{-1em}{\Young{3}{1}{0}{1}{0}{0}}) ) \\
&=& \overline{\sfR} (  \raisebox{-1em}{\Young{7}{1}{0}{1}{0}{0}}  + \raisebox{-1em}{\cYoung{3}{3}{0}{3}{0}{0}} + \tau (\raisebox{-1em}{\cYoung{3}{3}{1}{1}{1}{0}} + \raisebox{-1em}{\cYoung{3}{3}{1}{1}{0}{1}} + \raisebox{-1em}{\cYoung{3}{3}{0}{1}{1}{1}} ) \\
&&  \ \ \ \ \ \  \ \ + \tau (\raisebox{-1em}{\cYoung{3}{1}{1}{3}{1}{0}} + \raisebox{-1em}{\cYoung{3}{1}{1}{3}{0}{1}} + \raisebox{-1em}{\Young{3}{1}{0}{3}{1}{1}} ) ) \\
&=&  \overline{\sfR} (  \raisebox{-1em}{\Young{7}{1}{0}{1}{0}{0}}  + \tau \raisebox{-1em}{\Young{3}{1}{0}{3}{1}{1}}).
\end{eqnarray*}
In this way, we calculate  
\begin{eqnarray*}
\tilde{\mr{Q}}_2^\R \cdot \iota &=& \overline{\sfR} (  \raisebox{-1em}{\Young{3}{1}{0}{1}{0}{7}  } + \raisebox{-1em}{\Young{3}{1}{0}{7}{1}{0}  } +\raisebox{-1em}{\Young{7}{3}{1}{1}{0}{0}}  )  \\
\mr{Q}_2^\R \cdot \iota  &=& \overline{\sfR} (  \raisebox{-1em}{\Young{3}{1}{0}{1}{0}{7}  } + \raisebox{-1em}{\Young{3}{1}{0}{7}{1}{0}  } +\raisebox{-1em}{\Young{7}{3}{1}{1}{0}{0}}   + \rho\raisebox{-1em}{\Young{3}{1}{0}{3}{1}{3}} ), \\
\end{eqnarray*}
where  the details are left to the reader. 

\begin{figure}[h]
\includegraphics[width=0.8\textwidth]{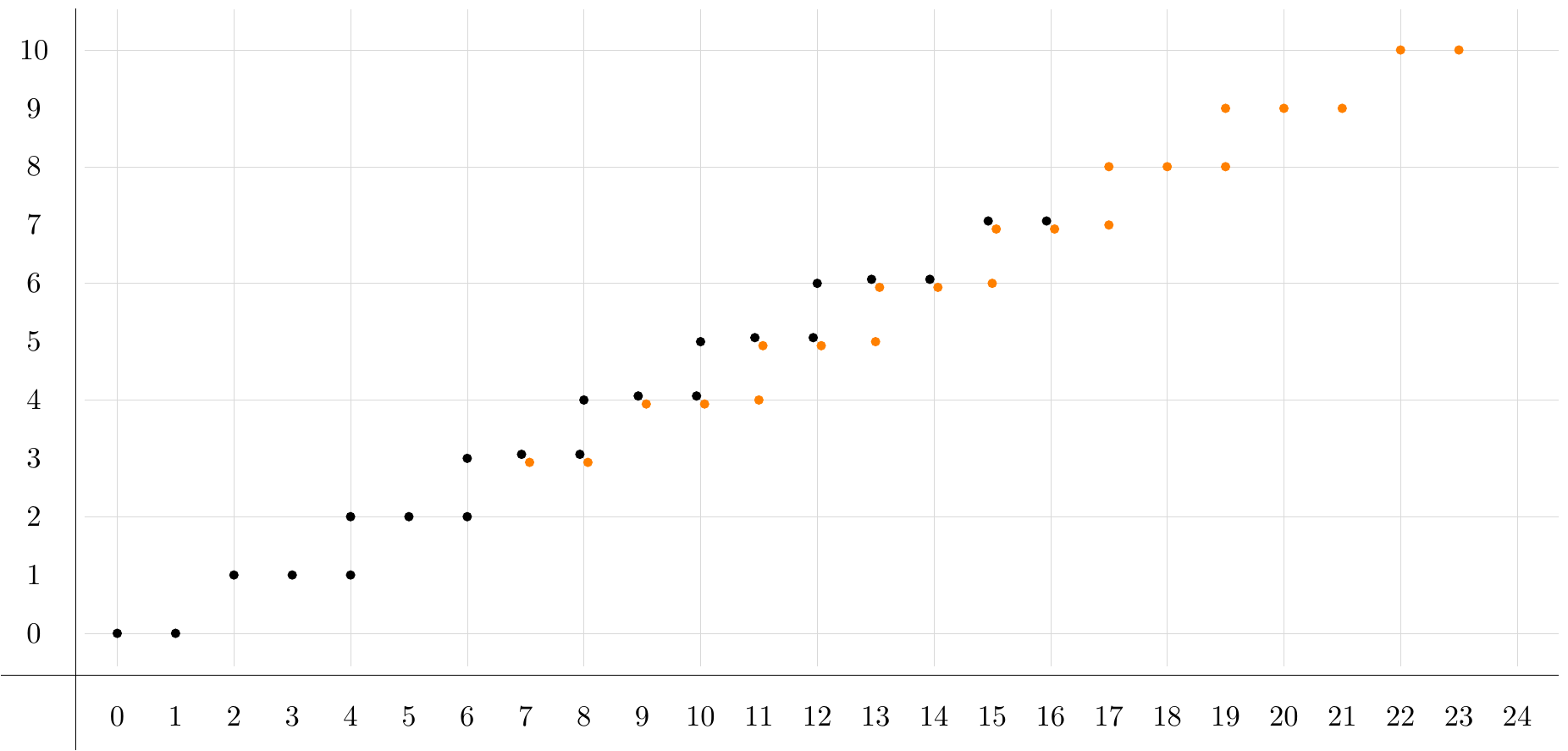}
\caption{$\bMR$-module  generators of $\cAR(2)$. Black dots correspond to generators of  $\tilde{\mathcal{B}}^\R (2)$ and orange dots to $\Sigma^{7,3}\tilde{\mathcal{B}}^\R (2)$.
}
\label{fig:motivicA2gen}
 \end{figure}
 
\begin{rmk} \label{degBA2}
We record (see \autoref{fig:motivicA2gen}), in the notation introduced in \autoref{subsec:wTRT1}, that  
\begin{eqnarray*}
\mathcal{D}_{\tilde{\mathcal{B}}^\R (2)}  = &\{ (0,0), (1,0), (2,1), (3,1), (4,1), (4,2),(5,2),  (6,2), (6,3),  (7,3),  (8,3), (8,4),   \\
&   (9,4), (10,4), (10,5),  (11,5), (12,5), (12,6),  (13,6), (14,6), (15,7), (16,7) \}  
\end{eqnarray*}
and $ \mathcal{D}_{\mr{A}_2^\R} = \{ (i+ 7\epsilon , j + 3 \epsilon): (i,j) \in \mathcal{D}_{\tilde{\mathcal{B}}^\R (2)} \text{ and } \epsilon \in \{ 0,1 \}   \}. $ 
\end{rmk}

\begin{proof}[Proof of \autoref{AmoduleLift}] 
Recall that the image of $\Sigma^{7,3}\tilde{\mathcal{B}}^\R(2)$ in  \eqref{exactAB} is the (left) $\cAR(2)$-submodule of $\mr{A}_2^\R$ generated by $\tilde{\mr{Q}}_2^\R$. We must check that this is
closed under the action of $\cAR$. Since $\Sq^1, \Sq^2, \Sq^4$ are  in $\cAR(2)$, it remains to check that for all $i \geq 3$ and $a \in \cAR(2)$ 
\[ \Sq^{2^i}\cdot (a \tilde{\mr{Q}}_2^\R \cdot \iota) = b \tilde{\mr{Q}}_2^\R \cdot \iota  \]
for some $b \in \cAR(2)$. 
For degree reasons (see \autoref{degBA2}), we only need to consider the case when $i=3$ and $a \in \{ 1, \Sq^1, \Sq^2 \}$. We check 
\begin{eqnarray*}
\Sq^8 \cdot (\tilde{\mr{Q}}_2^\R \cdot \iota) &=& (\Sq^4 \Sq^4 + \Sq^4 \Sq^2 \Sq^2) \tilde{\mr{Q}}_2^\R \cdot \iota\\
\Sq^8 \cdot  (\Sq^1 \tilde{\mr{Q}}_2^\R \cdot \iota) &=& (\Sq^7 \Sq^2 + \Sq^2 \Sq^7) \Sq^1 \tilde{\mr{Q}}_2^\R \cdot \iota \\
\Sq^8 \cdot (\Sq^2 \tilde{\mr{Q}}_2^\R \cdot \iota) &=& (\Sq^4\Sq^4 \Sq^2  + \Sq^4 \Sq^2 \Sq^4 + \tau \Sq^5 \Sq^4 \Sq^1) \Sq^2 \tilde{\mr{Q}}_2^\R \cdot \iota
\end{eqnarray*} 
and thus the result holds. 
\end{proof}

\begin{rmk} \label{NotQ2R}
We notice that  
\[ 
\Sq^8 \cdot (\mr{Q}_2^\R \cdot \iota) =  \overline{\sfR} ( \raisebox{-1em}{ \Young{7}{3}{0}{3}{0}{7} }+ \raisebox{-1em}{\Young{7}{3}{0}{7}{3}{0}} + \tau \raisebox{-1em}{\Young{7}{3}{1}{7}{1}{1}}  
 + \rho(\raisebox{-1em}{\Young{7}{1}{0}{7}{1}{3}} + \raisebox{-1em}{\Young{7}{1}{0}{3}{1}{7}} + \raisebox{-1em}{\Young{3}{1}{0}{7}{1}{7}} ) )
\] 
cannot be equal to $b \mr{Q}_2^\R \cdot \iota$ for any $b \in \cAR(2)$. This is an easy but tedious calculation. 
For the convenience of the reader, we note that an $\F$-basis for the elements in degree $|\Sq^8 | = (8,4)$ of $\cA^\R(2)$ is given by
\[ \{ \Sq^6 \Sq^2, \tau \Sq^7 \Sq^1, \tau \Sq^5 \Sq^2 \Sq^1, \rho \Sq^7, \rho \Sq^6 \Sq^1, \rho \Sq^5 \Sq^2 , \rho \Sq^4 \Sq^2 \Sq^1, \rho^2 \Sq^5 \Sq^1 \}. \]
\end{rmk}

  \subsection{The construction of $\mathcal{Z}_\R$} 
  \
  
 Recall the $\cAR$-module $\tilde{\mr{B}}^\R_2$, as given in \eqref{defn:B2tilde},  and let 
  \[ \mr{B}^\C_2 := \tilde{\mr{B}}^\R_2/(\rho). \] 
  
  \begin{proof}[Proof of \autoref{thm:Z}]
   Since $\mr{B}^\C_2$ is cyclic {as an $\cA^\C$-module}, it admits a May filtration, whose associated graded  is isomorphic to 
  \[ \gr(\mr{B}^\C_{2}) \cong \Lambda ( \upxi_{1,0}, \upxi_{1,1}, \upxi_{1,2}, \upxi_{2,0}, \upxi_{2,1})\]
  and whose $\mr{E}_2$-page of the corresponding May spectral sequence is isomorphic to 
   \begin{equation} \label{MayB}
   {^{\mr{May}}}\mr{E}_{1, \mr{B}^\C_{2} }^{*,*,*,*} \cong  \frac{\bM^\C[\sfh_{i,j}: i \geq 1, j \geq 0] }{(\sfh_{1,0}, \sfh_{1,1},\sfh_{1,2}, \sfh_{2,0}, \sfh_{2,1} )}. 
   \end{equation}
 From this and \autoref{degBA2}, one easily checks that the condition for \autoref{wTRT3} is satisfied.  Thus, there exists $\mathcal{Z}_\R \in \SpfinR$ such that $\rHR^{*,*}(\mathcal{Z}_\R) \cong \tilde{\mr{B}}_2^{\R}$. 
\end{proof}  

\begin{rmk}  Since,  as an $\cA(2)$-module 
 \[ \rH^{*}(\Res{ (\upbeta(\mathcal{Z}_\R))}) \cong \Res_* ( \upbeta_*( \rHR^{*,*}(\mathcal{Z}_\R))) \cong \mathcal{B}(2),\]  the underlying spectrum of $\upbeta(\mathcal{Z}_\R)$ is indeed  one of the spectra $\mathcal{Z}$ considered in  \cite{BE20}, and therefore of type $2$. 
\end{rmk}

\appendix
\section{The $\R$-motivic Adem relations}
\label{AdemSec}

Voevodsky established the motivic version of the Adem relations \cite{V}*{Section~10}. However, his formulas contain some typos, so for the convenience of the reader, we here present the Adem relations, in the $\R$-motivic case.

\begin{prop}\label{RMotivicAdem} 
In the $\R$-motivic Steenrod algebra $\cAR$, the product $\Sq^a \Sq^b$ is equal to 
\begin{enumerate}
\item ($a$ and $b$ both even) 
\[\sum_{j=0}^{a/2} \tau^{j\ \mathrm{mod} 2} \binom{b-1-j}{a-2j}\Sq^{a+b-j}\Sq^j \]
\item ($a$ odd and $b$ even) 
\[\sum_{j=0}^{(a-1)/2}\binom{b-1-j}{a-2j}\Sq^{a+b-j}\Sq^j+\rho\binom{b-j}{a-2j}\Sq^{a+b-j-1}\Sq^j.\]
\item ($a$ even and $b$ odd)
\[\sum_{j=0}^{a/2}\binom{b-1-j}{a-2j}\Sq^{a+b-j}\Sq^j + \rho \binom{b-1-j}{a+1-2j}\Sq^{a+b-j-1}\Sq^{j}.\]
\item ($a$ and $b$ both odd)
\[\sum_{j=0}^{(a-1)/2} \binom{b-1-j}{a-2j}\Sq^{a+b-j}\Sq^j\]
\end{enumerate}
\end{prop}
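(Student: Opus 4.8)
The statement to prove is \autoref{RMotivicAdem}, the explicit form of the Adem relations in $\cAR$. My plan is to deduce this from Voevodsky's work \cite{V}*{Section~10}, correcting the typos that appear in the published formulas rather than re-deriving the relations from scratch. The conceptual content is already established in \cite{V}: the $\R$-motivic Steenrod algebra acts on the cohomology of spaces, the operations $\Sq^i$ are defined via the equivariant extended power construction (as recalled in \autoref{sec:Comparison}), and the full set of relations among them is generated by Adem-type relations whose coefficients are the usual binomial coefficients mod $2$, decorated by powers of $\tau$ and $\rho$ according to the motivic weights. So the task is genuinely one of bookkeeping: pin down the exact powers of $\tau$ and $\rho$ and the exact binomial coefficients in each of the four parity cases.

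First I would recall Voevodsky's general formula for $\Sq^a\Sq^b$ (from \cite{V}*{Section~10}), keeping careful track of the bidegrees: $\Sq^{2i}$ has bidegree $(2i,i)$ and $\Sq^{2i+1}$ has bidegree $(2i+1,i)$, so in a relation $\Sq^a\Sq^b = \sum c_j \Sq^{a+b-j}\Sq^j$ the weight on both sides must match, which is exactly what forces the $\tau^{j \bmod 2}$ factor in case (1) (when $a,b$ are both even, a term $\Sq^{a+b-j}\Sq^j$ with $j$ odd is one unit short in weight, hence needs a $\tau$) and the $\rho$ factors with shifted topological degree in the mixed-parity cases (2) and (3). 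I would organize the verification case by case: for $a,b$ both even, and for $a,b$ both odd, the $\rho$ does not appear and the formula reduces to a $\tau$-weighted version of the classical Adem relation, which one checks by comparing with the $\C$-motivic relations and then observing that setting $\rho=0$ in cases (2),(3) recovers these. For the mixed cases, the cleanest check is to compare against the $\C$-motivic Adem relations (obtained by quotienting by $(\rho)$, using $\cAR/(\rho)\cong\cA^\C$ as noted in \autoref{subsec:wTRT2}) to fix the $\rho$-free terms, and then to determine the $\rho$-coefficient by a low-degree computation or by appealing directly to Voevodsky's formula with the sign/typo corrections applied.

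The main obstacle — and really the only substantive point — is making sure the corrected binomial coefficients and the placement of $\tau$ and $\rho$ are exactly right, since the stated motivation is precisely that \cite{V}'s published formulas contain errors. I would cross-check the proposed formulas in two independent ways: (i) verify internal consistency by confirming that the relations are compatible with the Cartan formula and with the known action on $\rHR^{*,*}(\RP^\infty_{\text{mot}})$ or on a few small cell complexes (for instance $\Sq^2\Sq^3 = \Sq^5 + \Sq^4\Sq^1 + \rho\,\Sq^3\Sq^1$, which is used in the proof of \autoref{list}, and $\Sq^4\Sq^4 = \Sq^2\Sq^4\Sq^2 + \tau\,\Sq^3\Sq^4\Sq^1$, also used there — these fall out of cases (3) and (1) respectively and give a sanity check); and (ii) confirm that the $\F$-dimension count of $\cAR$ in each bidegree matches what the admissible monomials predict, ensuring no relation has been mis-stated. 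Since the result is essentially a corrected transcription rather than a new theorem, I expect the ``proof'' to consist of stating that the formulas follow from \cite{V}*{Section~10} after the indicated corrections, together with the remark that they have been verified against the Cartan formula and against the specific instances invoked elsewhere in the paper.
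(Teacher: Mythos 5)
Your overall strategy matches what the paper actually does: \autoref{RMotivicAdem} is given no formal proof there, only the statement that Voevodsky established the motivic Adem relations in \cite{V}*{Section~10} and that the formulas are reproduced with his typos corrected, followed by a remark recording consistency observations. So treating the proposition as a corrected transcription, with the $\tau^{j\bmod 2}$ placement forced by weight-balancing (both sides of a relation must have the same bidegree, and $\Sq^{2i}$, $\Sq^{2i+1}$ have weights $i$, $i$), is exactly the paper's posture, and your spot checks ($\Sq^2\Sq^3$, $\Sq^4\Sq^4$) reappear in the paper as examples.

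The one substantive derivation the paper does supply, and which your proposal misses, is the observation that cases (2) and (4) \emph{follow} from cases (1) and (3): for odd $a$ one has $\Sq^a=\Sq^1\Sq^{a-1}$, and since $\Sq^1(\tau)=\rho$, left-multiplying the even-even relation (1) by $\Sq^1$ commutes $\Sq^1$ past the coefficients $\tau^{j\bmod 2}$ at the cost of the terms $\rho\,\Sq^{a+b-j-1}\Sq^j$; this is precisely where the $\rho\binom{b-j}{a-2j}$ summands of case (2) come from, and similarly (4) comes from (3). This matters because it is the only place the $\rho$-corrections are internally derived rather than quoted. Your proposed substitute is the weak point of the plan: reducing mod $\rho$ to compare with $\cA^\C\cong\cAR/(\rho)$ is blind to exactly the terms in question; finitely many low-degree computations cannot determine a binomial coefficient such as $\binom{b-j}{a-2j}$ for all $a,b$; and ``appealing directly to Voevodsky's formula with the typo corrections applied'' is circular, since the corrections are the content to be certified. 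Weight considerations fix \emph{where} a $\rho$ and a degree shift must appear, but not the coefficient in front of it. If you incorporate the $\Sq^1$-factorization argument (which reduces the verification burden to cases (1) and (3), the first of which is the classical relation with $\tau$ inserted for weight reasons), your write-up would match, and in the mixed case (2) actually exceed, the justification the paper records.
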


\begin{rmk}
Given that $\Sq^a = \Sq^1 \Sq^{a-1}$ if $a$ is odd and also that $\Sq^1(\tau) = \rho$, cases (2) and (4) follow from (1) and (3), respectively. Note also that (1) is the classical formula, but with $\tau$ thrown in whenever needed to balance the weights. In formula (2), the left term appears only when $j$ is even, while the second appears only when $j$ is odd. In formula (3), the second term appears only when $j$ is odd.
\end{rmk}

\begin{eg}
Some examples of the $\R$-motivic Adem relation in low degrees are
\[ \Sq^2 \Sq^2 = \tau \Sq^3 \Sq^1 , \qquad \Sq^3 \Sq^2 = \rho \Sq^3 \Sq^1,\]
and
\[\Sq^2 \Sq^3 = \Sq^5 + \Sq^4 \Sq^1 + \rho \Sq^3 \Sq^1.\]
\end{eg}

 \bibliographystyle{amsalpha}
\begin{bibdiv}
\begin{biblist}

\bib{BBBCX}{article}{
     AUTHOR = {Beaudry, Agnes},
     AUTHOR = {Behrens, Mark},
     AUTHOR = {Bhattacharya, Prasit},
     AUTHOR = {Culver, Dominic},
     AUTHOR = {Xu, Zhouli},
     title = {The telescope conjecture at height 2 and the tmf resolution},
     eprint = {https://arxiv.org/pdf/1909.13379},
     year = {2019},
}

\bib{BW}{article}{
   author={Behrens, Mark},
   author={Wilson, Dylan},
   title={A $C_2$-equivariant analog of Mahowald's Thom spectrum theorem},
   journal={Proc. Amer. Math. Soc.},
   volume={146},
   date={2018},
   number={11},
   pages={5003--5012},
   issn={0002-9939},
   review={\MR{3856165}},
   doi={10.1090/proc/14175},
}

\bib{BI}{article}{
     AUTHOR = {Belmont, Eva},
     AUTHOR = {Isaksen, Daniel},
     TITLE = {$\R$-motivic stable stems},
     eprint = {https://arxiv.org/abs/2001.03606},
     year = {2020},
}

\bib{BE20}{article}{
   author={Bhattacharya, Prasit},
   author={Egger, Philip},
   title={A class of 2-local finite spectra which admit a $v_2^1$-self-map},
   journal={Adv. Math.},
   volume={360},
   date={2020},
   pages={106895, 40},
   issn={0001-8708},
   review={\MR{4031119}},
   doi={10.1016/j.aim.2019.106895},
}

\bib{BEM}{article}{
   author={Bhattacharya, Prasit},
   author={Egger, Philip},
   author={Mahowald, Mark},
   title={On the periodic $v_2$-self-map of $A_1$},
   journal={Algebr. Geom. Topol.},
   volume={17},
   date={2017},
   number={2},
   pages={657--692},
   issn={1472-2747},
   review={\MR{3623667}},
   doi={10.2140/agt.2017.17.657},
}

\bib{BGL}{article}{
     AUTHOR = {Bhattacharya, Prasit},
     AUTHOR = {Guillou, Bertand},
     AUTHOR = {Li, Ang},
     TITLE = {$\R$-motivic stable stems},
     eprint = {https://arxiv.org/pdf/2008.05547.pdf},
     year = {2020},
     note={To appear in \emph{Homology, Homotopy, and Applications}},
}

\bib{Hinf}{book}{
   author={Bruner, R. R.},
   author={May, J. P.},
   author={McClure, J. E.},
   author={Steinberger, M.},
   title={$H_\infty $ ring spectra and their applications},
   series={Lecture Notes in Mathematics},
   volume={1176},
   publisher={Springer-Verlag, Berlin},
   date={1986},
   pages={viii+388},
   isbn={3-540-16434-0},
   review={\MR{836132}},
   doi={10.1007/BFb0075405},
}

\bib{Caruso}{article}{
   author={Caruso, Jeffrey L.},
   title={Operations in equivariant ${\bf Z}/p$-cohomology},
   journal={Math. Proc. Cambridge Philos. Soc.},
   volume={126},
   date={1999},
   number={3},
   pages={521--541},
   issn={0305-0041},
   review={\MR{1684248}},
   doi={10.1017/S0305004198003375},
}

\bib{DM}{article}{
   author={Davis, Donald M.},
   author={Mahowald, Mark},
   title={$v_{1}$- and $v_{2}$-periodicity in stable homotopy theory},
   journal={Amer. J. Math.},
   volume={103},
   date={1981},
   number={4},
   pages={615--659},
   issn={0002-9327},
   review={\MR{623131}},
   doi={10.2307/2374044},
}

\bib{LowMilnorWitt}{article}{
   author={Dugger, Daniel},
   author={Isaksen, Daniel C.},
   title={Low-dimensional Milnor-Witt stems over $\mathbb{R}$},
   journal={Ann. K-Theory},
   volume={2},
   date={2017},
   number={2},
   pages={175--210},
   issn={2379-1683},
   review={\MR{3590344}},
   doi={10.2140/akt.2017.2.175},
}

\bib{GH}{article}{
   author={Goerss, P. G.},
   author={Hopkins, M. J.},
   title={Moduli spaces of commutative ring spectra},
   conference={
      title={Structured ring spectra},
   },
   book={
      series={London Math. Soc. Lecture Note Ser.},
      volume={315},
      publisher={Cambridge Univ. Press, Cambridge},
   },
   date={2004},
   pages={151--200},
   review={\MR{2125040}},
   doi={10.1017/CBO9780511529955.009},
}

\bib{C2MW0}{article}{
   author={Guillou, Bertrand J.},
   author={Isaksen, Daniel C.},
   title={The Bredon-Landweber region in $C_2$-equivariant stable homotopy
   groups},
   journal={Doc. Math.},
   volume={25},
   date={2020},
   pages={1865--1880},
   issn={1431-0635},
   review={\MR{4184454}},
}

\bib{Thick}{article}{
   author={Hopkins, Michael J.},
   author={Smith, Jeffrey H.},
   title={Nilpotence and stable homotopy theory. II},
   journal={Ann. of Math. (2)},
   volume={148},
   date={1998},
   number={1},
   pages={1--49},
   issn={0003-486X},
   review={\MR{1652975}},
   doi={10.2307/120991},
}

\bib{HK}{article}{
   author={Hu, Po},
   author={Kriz, Igor},
   title={Real-oriented homotopy theory and an analogue of the Adams-Novikov
   spectral sequence},
   journal={Topology},
   volume={40},
   date={2001},
   number={2},
   pages={317--399},
   issn={0040-9383},
   review={\MR{1808224}},
   doi={10.1016/S0040-9383(99)00065-8},
}

\bib{LMS}{book}{
   author={Lewis, L. G., Jr.},
   author={May, J. P.},
   author={Steinberger, M.},
   author={McClure, J. E.},
   title={Equivariant stable homotopy theory},
   series={Lecture Notes in Mathematics},
   volume={1213},
   note={With contributions by J. E. McClure},
   publisher={Springer-Verlag, Berlin},
   date={1986},
   pages={x+538},
   isbn={3-540-16820-6},
   review={\MR{866482}},
   doi={10.1007/BFb0075778},
}

\bib{M81}{article}{
   author={Mahowald, Mark},
   title={{$b{\rm o}$}-resolutions},
   journal={Pacific J. Math.},
   number={2},
   date={1981},
   pages={365--383},
   issn={0030-8730},
   review={\MR{618072}},
   doi={},
}

\bib{M82}{article}{
   author={Mahowald, Mark},
   title={The image of J in the EHP sequence},
   journal={},
   number={2},
   date={1982},
   pages={},
   issn={},
   review={\MR{618072}},
   doi={},
}

\bib{MRS}{article}{
   author={Mahowald, Mark},
   author={Ravenel, Douglas},
   author={Shick, Paul},
   title={The triple loop space approach to the telescope conjecture},
   conference={
      title={Homotopy methods in algebraic topology},
      address={Boulder, CO},
      date={1999},
   },
   book={
      series={Contemp. Math.},
      volume={271},
      publisher={Amer. Math. Soc., Providence, RI},
   },
   date={2001},
   pages={217--284},
   review={\MR{1831355}},
   doi={10.1090/conm/271/04358},
}

\bib{GeomIter}{book}{
   label={Ma},
   author={May, J. P.},
   title={The geometry of iterated loop spaces},
   series={Lecture Notes in Mathematics, Vol. 271},
   publisher={Springer-Verlag, Berlin-New York},
   date={1972},
   pages={viii+175},
   review={\MR{0420610}},
}

\bib{MazelG}{book}{
   author={Mazel-Gee, Aaron},
   title={Goerss-Hopkins obstruction theory via model infinity-categories},
   note={Thesis (Ph.D.)--University of California, Berkeley},
   publisher={ProQuest LLC, Ann Arbor, MI},
   date={2016},
   pages={540},
   isbn={978-1369-05771-3},
   review={\MR{3593196}},
}

\bib{MillerTel}{article}{
   label={Mi},
   author={Miller, Haynes R.},
   title={On relations between Adams spectral sequences, with an application
   to the stable homotopy of a Moore space},
   journal={J. Pure Appl. Algebra},
   volume={20},
   date={1981},
   number={3},
   pages={287--312},
   issn={0022-4049},
   review={\MR{604321}},
   doi={10.1016/0022-4049(81)90064-5},
}

\bib{PV}{article}{
  author={Pstragowski, Piotr},
  author={Vankoughnett, Paul},
  title={Abstract Goerss-Hopkins Theory},
  eprint={https://arxiv.org/abs/1904.08881},
  year={2019},
}

\bib{Rav}{book}{
   label={Ra},
   author={Ravenel, Douglas C.},
   title={Nilpotence and periodicity in stable homotopy theory},
   series={Annals of Mathematics Studies},
   volume={128},
   note={Appendix C by Jeff Smith},
   publisher={Princeton University Press, Princeton, NJ},
   date={1992},
   pages={xiv+209},
   isbn={0-691-02572-X},
   review={\MR{1192553}},
}

\bib{Roth}{book}{
   label={Ro},
   author={Roth, Marilyn Jean},
   title={The cyclic module structures of the Hopf subalgebra A(2) over the Steenrod algebra and their geometric realization},
   note={Thesis (Ph.D.)--The Johns Hopkins University},
   publisher={ProQuest LLC, Ann Arbor, MI},
   date={1977},
   pages={72},
   review={\MR{2626901}},
}

\bib{T71}{article}{
   author={Toda, Hirosi},
   title={On spectra realizing exterior parts of the Steenrod algebra},
   journal={Topology},
   volume={10},
   date={1971},
   pages={53--65},
   issn={0040-9383},
   review={\MR{271933}},
   doi={10.1016/0040-9383(71)90017-6},
}

\bib{V}{article}{
   author={Voevodsky, Vladimir},
   title={Reduced power operations in motivic cohomology},
   journal={Publ. Math. Inst. Hautes \'{E}tudes Sci.},
   number={98},
   date={2003},
   pages={1--57},
   issn={0073-8301},
   review={\MR{2031198}},
   doi={10.1007/s10240-003-0009-z},
}

\bib{W1}{article}{
      title={Power operations for $\text{H}\underline{\mathbb{F}}_2$ and a cellular construction of $\text{BP}\mathbf{R}$}, 
      author={Wilson, Dylan},
      year={2017},
      eprint={https://arxiv.org/abs/1611.06958},
}

\bib{W2}{article}{
      title={$C_2$-equivariant Homology Operations: Results and Formulas}, 
      author={Wilson, Dylan},
      year={2019},
      eprint={https://arxiv.org/abs/1905.00058},
 }

\end{biblist}
\end{bibdiv}

\end{document}